\newtheorem{Lemma}{\textbf{Lemma}}
\newtheorem{Theorem}{\textbf{Theorem}}
\newtheorem{Example}{\textbf{Example}}
\newtheorem{Assumption}{\textbf{Assumption}}
\newtheorem{Definition}{\textbf{Definition}}
\newtheorem{remark}{Remark}
\date{\today}
\title{A quantum-inspired algorithm for approximating statistical leverage scores
\thanks{This work was supported by the National Key Research and Development Program of China No.\ 2021YFA1000600, the National Natural Science Foundation of China under Grant No.\ 11571265.}}
\author{Qian Zuo\footnotemark[2]\
\and Hua Xiang\footnotemark[2] \footnotemark[3] \footnotemark[4]}
\begin{document}
\maketitle
\renewcommand{\thefootnote}{\fnsymbol{footnote}}
\footnotetext[2]{School of Mathematics and Statistics, Wuhan
University, Wuhan 430072, China.} \footnotetext[3]{Hubei Key
Laboratory of Computational Science, Wuhan University, Wuhan 430072,
China.} \footnotetext[4]{E-mail address: {\tt hxiang@whu.edu.cn}.}

\vspace{-1em}
\begin{abstract}
Suppose a matrix $A \in \mathbb{R}^{m \times n}$ of rank $r$ with singular value decomposition $A = U_{A}\Sigma_{A} V_{A}^{T}$, where $U_{A} \in \mathbb{R}^{m \times r}$, $V_{A} \in \mathbb{R}^{n \times r}$ are orthonormal and $\Sigma_{A} \in \mathbb{R}^{r \times r}$ is a diagonal matrix. The statistical leverage scores of a matrix $A$ are the squared row-norms defined by $\ell_{i} = \|(U_{A})_{i,:}\|_2^2$, where $i \in [m]$, and the matrix coherence is the largest statistical leverage score. These quantities play an important role in machine learning algorithms such as matrix completion and Nystr\"{o}m-based low rank matrix approximation as well as large-scale statistical data analysis applications, whose usual algorithm complexity is polynomial in the dimension of the matrix $A$. As an alternative to the conventional approach, and inspired by recent development on dequantization techniques, we propose a quantum-inspired algorithm for approximating the statistical leverage scores. We then analyze the accuracy of the algorithm and perform numerical experiments to illustrate the feasibility of our algorithm. Theoretical analysis shows that our novel algorithm takes time polynomial in an integer $k$, condition number $\kappa$ and logarithm of the matrix size.
{\small
\begin{description}
\item[{\bf Keywords:}] Statistical leverage scores, Matrix coherence, Sample model, Randomized algorithm.\\
\end{description}
}
\end{abstract}

\section{Introduction}\label{sec:QI:SLSMC:Indro}\noindent
Conceptually, statistical leverage scores give a measure of the relative importance of each row. In \cite{HW78}, Hoaglin and Welsch first used statistical leverage scores as a regression diagnosis of least squares to detect potential outliers in the data. They describe how the size of the $i$-th statistical leverage score gives a measure of the influence of the $i$-th row on the least squares data fitting. If the $i$-th statistical leverage score is large, then one may doubt the accuracy of the corresponding data point, and may want to remove the abnormal data points in order to better fit the remaining data. Recently, Drineas et al. \cite{DMM061,DMM062,MWM11} advocate the use of statistical leverage scores in many randomized matrix algorithms. These random sampling algorithms for solving matrix problems are important tools to design fast algorithms, like least-squares regression \cite{DMMS11} and low rank matrix approximation \cite{TS06,DMM08,MD09,BMD09}. A related concept is matrix coherence, defined by the largest statistical leverage score, and it has gained widespread attention on recent popular problems, see \cite{CR09,TR10} and the references therein.

We can apply SVD or QR decomposition to obtain an orthogonal basis of the input matrix, then we compute the squared Euclidean norm of the rows of the orthogonal matrix to gain the statistical leverage scores. The matrix coherence can be obtained by finding the largest statistical leverage scores. The total time complexity is $O\left({\rm min}(mn^2,m^2n)\right)$, which is extremely slow if $m$ and $n$ are large. Many fast algorithms have been developed for their approximations \cite{DW12,LMP13,MMI10,LZ17,CP17,NN13,CMM17,SG21}, to name just a few. Drineas et al. \cite{DW12} present a randomized Hadamard transform algorithm based on subsampling to approximate statistical leverage scores in $O((mn + n^3){\rm log}\,m)$ time, which is much faster than the traditional algorithm for the case when $m \gg n \gg 1$. Recently, Sobczyk  et al. \cite{SG21} propose a approach based on rank revealing methods with compositions of dense and sparse randomized dimensionality reduction transforms. The running time complexity for dense and sparse matrices are about $O(nnz(A) + n^4 + nnz(A_{:,\mathcal{K}})\frac{{\rm ln}\, n}{\epsilon^2})$ and $O(nnz(A) + n^4 + nnz(A_{:,\mathcal{K}}))$, respectively, where $nnz(A)$ is the number of non-zero entries of $A$, $A_{:,\mathcal{K}}$ is the submatrix of $A$ containing the columns defined by $\mathcal{K}$, $\mathcal{K} \subseteq [n]$, $|\mathcal{K}| = k$, $k \leq {\rm rank}(A)$ and $\epsilon \in (0,1)$.

The first quantum algorithm to solve an $n \times n$ sparse linear system was proposed by Harrow, Hassidim, and Lloyd  in 2009 (referred to as the HHL algorithm) \cite{HHL09}. It outputs a quantum state $|x\rangle$ proportional to the answer $x$, and the running time complexity  is $O\left(({\rm log}\,n)s^2\kappa^2/\epsilon\right)$, where $\kappa$ is the condition number, $s$ is the most non-zero entries in each row of matrix $A$ and $\epsilon$ is the precision parameter. Compared with any classical version method, the HHL algorithm achieves an exponential speed up by computing the quantum state of the solution. Inspired by the HHL algorithm, a series of logarithmic time quantum algorithms have been proposed to solve various machine learning related problems, such as least square approximation \cite{WBL12,SXL20,HX19,KP20,SSP16}, linear system equations \cite{CGJ19,CJS13,18WZP,18WZP,SH18}, principal component analysis \cite{LMR14}, support vector machine \cite{RML14}, recommendation systems \cite{KP16}, etc \cite{CKD12,SX20,ZSWX21}. Like the HHL algorithm, the above logarithmic time algorithms do not provide a complete description of the solution, but a quantum state for sampling. Liu and Zhang \cite{LZ17} give a fast quantum algorithm based on amplitude amplification and amplitude estimation to approximate the statistical leverage scores of $A$ in $O({\rm log}\,(m + n)s^2 \kappa / \epsilon)$ time, which is an exponential speedup over the best known classical algorithms.
Note that the classical algorithm might also complete sampling from the solution in logarithmic time.
Recently, Tang \cite{ET18} proposes a classical algorithm for recommendation systems within logarithmic time by using the efficient low rank approximation techniques of Frieze, Kannan and Vempala \cite{FKV04}.
Such dequantizing techniques are applied to other low-rank matrix problems, such as matrix inversion \cite{GST18,CLW18,CGLLTW20}, singular value transformation \cite{JGS19}, non-negative matrix factorization \cite{CLS19}, support vector machine \cite{DBH21}, general minimum conical hull problems \cite{DHLT20}. Dequantizing techniques in those algorithms involve two technologies, the Monte-Carlo singular value decomposition and sampling techniques, which can be efficiently performed on low rank matrices.

Motivated by the work in Frieze et al. \cite{FKV04} and Tang \cite{ET18}, in this paper we present a quantum-inspired classical algorithm to compute the statistical leverage scores under some natural sampling assumptions. More specifically, based on the subsampling method in \cite{FKV04}, we obtain a small submatrix $W \in \mathbb{R}^{p \times p}$, its singular values and the corresponding right singular vectors. Then, we use the submatrix to exploit a new concise representation of the approximate left singular matrix $\hat{U}$ of $A$, i.e., $\hat{U} = S V\Sigma^{-1} \in \mathbb{R}^{m \times k}$, where $S \in \mathbb{R}^{m \times p}$, $V \in \mathbb{R}^{p \times k}$, and $\Sigma \in \mathbb{R}^{k \times k}$ are our subsampled low rank approximations to $A$, $V_{A}$, and $\Sigma_{A}$, respectively (from $A$'s singular value decomposition in Definition \ref{QI:SLSMC:Def:L1}). Next, we use the approximate left singular matrix $\hat{U}$ of $A$ to calculate the statistical leverage scores $\ell_{i}$, where $i \in [m]$. The core ideas are as follows. Firstly, based on $\hat{U}_{i,:} = S_{i,:} V\Sigma^{-1}$, we present a logarithmic time to estimate $S_{i,:} V$, which can be achieved by Lemma \ref{QI:SLSMC:Lemma:xy} for all $j \in [k]$ to approximate the inner product $S_{i,:} V_{:,j}$, resulting a 1-by-$k$ vector $\mathrm{t}^{T}$. Secondly, we calculate the matrix-vector multiplication $\tilde{U}_{i,:} = \mathrm{t}^{T} \Sigma^{-1}$, and then compute the inner product $\tilde{\ell}_{i} = \tilde{U}_{i,:}\tilde{U}_{i,:}^{T}$. Finally, we show that $\tilde{\ell}_{i}$ is a good approximation to $\ell_{i}$ in Theorem \ref{QI:SLSeMC:thm:M2}. Our algorithm takes time polynomial in integer $k$, condition number $\kappa$ and logarithm of the matrix size. The proposed algorithm achieves an exponential speedup in matrix size over any other classical algorithms for approximating statistical leverage scores.

Let us start with the following definition of the statistical leverage scores of a matrix.
\begin{Definition}\cite{LZ17}\label{QI:SLSMC:Def:L1}
Suppose a matrix $A \in \mathbb{R}^{m \times n}$ of rank $r$ with singular value decomposition $A = U_{A}\Sigma_{A} V_{A}^{T}$ where $U_{A} \in \mathbb{R}^{m \times r}$, $V_{A} \in \mathbb{R}^{n \times r}$ are orthonormal and $\Sigma_{A} \in \mathbb{R}^{r \times r}$ is a diagonal matrix. The statistical leverage scores of $A$ are defined as $\ell_{i} = \|(U_{A})_{i,:}\|_2^2, i \in [m]$, where $(U_{A})_{i,:}$ is the $i$-th row of $U_{A}$. The matrix coherence of $A$ is defined as $c = {\rm max}_{i \in [m]}\ell_{i}$, which is the largest statistical leverage score of $A$.
\end{Definition}

In this work, we use $\|A\|$, $\|A\|_F$ to denote the spectral norm and the Frobenius norm of the matrix $A$, respectively. For any $1 \leq i \leq n$, denote the $i$-th row of $A$ as $A_{i,:}$, and the $j$-th column of $A$ as $A_{:,j}$. Let $e_j$ be the $j$-th column of unit matrix. The $(i,j)$-entry of $A$ is denoted by $A_{i,j}$. Similarly, $v_{i}$ stands for the $i$-th entry of a vector $v$. We use $A^T$ to denote the transpose of $A$, and let $I \in \mathbb{R}^{m \times m}$ be identity matrix. The usual condition number is defined by $\kappa = \|A\| \|A^{-1}\|$, which is not well defined for a singular matrix. Here we slightly change the definition of the condition number of a singular matrix $A$, defined by $\kappa = \|A\|/\sigma_{\rm {min}}(A)$, where $\sigma_{\rm {min}}(A)$ is the minimum nonzero singular value of $A$. For a nonzero vector $x \in{\mathbb{R}^n}$, we denote by $\mathcal{D}_x$ the probability distribution on $i \in [n]$ whose probability density function is defined as $\mathcal{D}_x(i) = \frac{|x_{i}|^2}{\|x\|^2}$, where $[n]:= \{1,2,\cdots,n\}$. A sample from $\mathcal{D}_x$ is often referred to as a sample from $x$. In the problem of computing statistical leverage scores $\ell_{i}$, we need to query and sample from $(U_{A})_{i,:}$ for a given matrix $U_{A}$ and an index $i \in [m]$. Our main result is summarized as follows.

\begin{Theorem}\label{QI:SLSMC:thm:M2}
Suppose that a matrix $A \in \mathbb{R}^{m \times n}$ satisfies the sample model and data structure. Let $\epsilon \in (0,1)$ be an error parameter, $k \in \mathbb{N_{+}}$, $\delta \in (0,1)$ the failure probability, $\kappa$ the condition number, and recall that the definition of the statistical leverage scores $\ell$ from Definition \ref{QI:SLSMC:Def:L1}. Then, for $i \in [m]$, there exists an algorithm (Algorithm \ref{QI:Algorithm:SLSMC:FASLS:a2} of Section \ref{sec:QI:SLSMC:Algorithm} below) can return the approximate statistical leverage scores $\tilde{\ell}_{i}$, s.t. $|\ell_{i} - \tilde{\ell}_{i}| < \epsilon$, with probability $1-\delta$ by using $O\left({\rm poly} \left(k, \kappa, \frac{1}{\epsilon}, \frac{1}{\delta}, \frac{\|A\|_F}{\|A\|}, {\rm log}(mn)\right) \right)$ queries and time.
\end{Theorem}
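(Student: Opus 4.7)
The plan is to decompose the total error $|\ell_i - \tilde{\ell}_i|$ into conceptually distinct sources and control each one separately. First I would establish that the FKV-style subsampled matrix $W \in \mathbb{R}^{p \times p}$ yields, with probability at least $1 - \delta/3$ and for $p$ polynomial in the relevant parameters, $k$ approximate right singular vectors $V$ and singular values $\Sigma$ such that $\hat{U} := SV\Sigma^{-1}$ is close to the true top-$k$ portion of $U_A$ in the sense that $|\hat{\ell}_i - \ell_i|$ is controlled uniformly in $i$, where $\hat{\ell}_i := \|\hat{U}_{i,:}\|^2$. This is the standard Frieze--Kannan--Vempala / Tang statement, so I would invoke it, being careful to track how the approximation quality depends on $k$, $\|A\|_F/\|A\|$, and $\kappa$ (the latter entering because the $\Sigma^{-1}$ factor amplifies any error in $S_{i,:}V$ by roughly $1/\sigma_{\min}(A)$).

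Next I would analyze the per-index approximation of a single row $\hat{U}_{i,:} = S_{i,:}V\Sigma^{-1}$. By Lemma \ref{QI:SLSMC:Lemma:xy} applied $k$ times, once per column $V_{:,j}$, each inner product $S_{i,:}V_{:,j}$ can be estimated to additive accuracy $\epsilon'$ with failure probability at most $\delta'$ in time polylogarithmic in $mn$, producing a vector $\mathrm{t}^{T} \in \mathbb{R}^{1 \times k}$ with $\|\mathrm{t}^{T} - S_{i,:}V\|_\infty \le \epsilon'$. A union bound over the $k$ coordinates reduces the per-row failure probability to $k\delta' \le \delta/3$, and gives $\|\mathrm{t}^T - S_{i,:}V\|_2 \le \sqrt{k}\,\epsilon'$.

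Then I would translate this vector-level error into the scalar error on $\tilde{\ell}_i$. Writing $\tilde{U}_{i,:} = \mathrm{t}^{T}\Sigma^{-1}$ and $\tilde{\ell}_i = \tilde{U}_{i,:}\tilde{U}_{i,:}^{T}$, the elementary identity
\begin{equation*}
|\tilde{\ell}_i - \hat{\ell}_i| = \bigl|\|\tilde{U}_{i,:}\|^2 - \|\hat{U}_{i,:}\|^2\bigr| \le \bigl(\|\tilde{U}_{i,:}\| + \|\hat{U}_{i,:}\|\bigr)\,\|\tilde{U}_{i,:} - \hat{U}_{i,:}\|,
\end{equation*}
combined with $\|\tilde{U}_{i,:} - \hat{U}_{i,:}\| \le \|\mathrm{t}^{T} - S_{i,:}V\|_2\,\|\Sigma^{-1}\| \le \sqrt{k}\,\epsilon'/\sigma_{\min}(A)$ and an a priori bound $\|\hat{U}_{i,:}\| \le 1 + o(1)$ coming from Step~1, reduces everything to a bound linear in $\epsilon'$ and polynomial in $\kappa$, $k$. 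Choosing $\epsilon' = \Theta(\epsilon/\mathrm{poly}(k,\kappa))$ and then combining with the FKV-side bound $|\hat{\ell}_i - \ell_i| \le \epsilon/2$ via the triangle inequality yields $|\tilde{\ell}_i - \ell_i| < \epsilon$ with total failure probability at most $\delta$.

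The running-time bound follows by adding: the FKV preprocessing costs $\mathrm{poly}(k, 1/\epsilon, 1/\delta, \|A\|_F/\|A\|, \log(mn))$ under the assumed sample model and data structure, each of the $k$ inner-product estimates costs $\mathrm{poly}(1/\epsilon', 1/\delta', \log(mn))$ by Lemma \ref{QI:SLSMC:Lemma:xy}, and the remaining $\Sigma^{-1}$ scaling and final dot product are $O(k)$. The main obstacle I expect is the careful propagation of error through $\Sigma^{-1}$: because inverting the smallest retained singular value scales noise by $\kappa$, a naive error budget is loose and must be combined with the FKV guarantee that $\Sigma$ is itself close to the top $k$ singular values of $A$, so that $\|\Sigma^{-1}\|$ does not blow up beyond a controlled multiple of $1/\sigma_{\min}(A)$. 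Getting these two bounds to be compatible, and choosing $p$, $\epsilon'$, and $\delta'$ so that the final polynomial in $k$, $\kappa$, $1/\epsilon$, $1/\delta$, and $\|A\|_F/\|A\|$ comes out as stated, is where essentially all of the technical work lies.
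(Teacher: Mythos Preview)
Your overall decomposition $|\ell_i - \tilde{\ell}_i| \le |\ell_i - \hat{\ell}_i| + |\hat{\ell}_i - \tilde{\ell}_i|$ and your treatment of the second term match the paper's proof exactly: the paper bounds $\|\mathrm{t}^T - S_{i,:}V\| < \sqrt{k}\,\xi\,\|S\|_F$, passes through $\Sigma^{-1}$ using a lower bound on $\sigma_{\min}(W)$ (their Lemma~\ref{QI:SLSMC:Lemma:mmw}, which plays the role you assign to ``$\|\Sigma^{-1}\|$ does not blow up''), and then uses the same bilinear identity $|\tilde{\ell}_i - \hat{\ell}_i| \le (\|\tilde{U}_{i,:}\| + \|\hat{U}_{i,:}\|)\,\|\tilde{U}_{i,:} - \hat{U}_{i,:}\|$.

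The gap is in your first step. You write that the uniform control of $|\hat{\ell}_i - \ell_i|$ ``is the standard Frieze--Kannan--Vempala / Tang statement, so I would invoke it.'' It is not: the FKV/Tang guarantee is the Frobenius-norm statement $\|A - \hat{U}\hat{U}^T A\|_F^2 \le \|A - A_k\|_F^2 + \epsilon\|A\|_F^2$, which gives no per-row control on leverage scores. What the paper actually needs (and proves, as its Theorem~\ref{QI:SLSMC:Theorem:UI}) is the \emph{spectral} projector bound $\|U_AU_A^T - \hat{U}\hat{U}^T\| < \epsilon/2$, from which $|\ell_i - \hat{\ell}_i| = |e_i^T(U_AU_A^T - \hat{U}\hat{U}^T)e_i|$ follows uniformly in $i$. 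Upgrading the global Frobenius bound to this spectral projector bound is the most substantial technical argument in the paper: it uses a QR factorization of $\hat{U}$, a Cholesky perturbation lemma to show $\hat{U}$ is $\beta$-close to an orthonormal matrix $Z$, and then a singular-value argument on $Z^TU_A$ to control $\|(I - ZZ^T)U_A\|$. You correctly anticipate that the real work lies somewhere in Step~1, but you locate it in the $\Sigma^{-1}$ amplification; in fact that part is routine (handled by a single Weyl-type lemma), whereas the Frobenius-to-spectral conversion for the projector is the piece you are missing and would not find ready-made in FKV or Tang.
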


The remainder of this work is organized as follows. In Section \ref{sec:QI:SLSMC:Sample:Model}, we review some basic concepts of the sample model and data structure, and then we describe the subsampling algorithm and our fast algorithm for statistical leverage scores  in Section \ref{sec:QI:SLSMC:Algorithm}. Next, in Section \ref{sec:QI:SLSMC:Algorithm:Analysis} we give a detailed algorithm analysis. Finally, we end this paper with some conclusions in Section \ref{sec:QI:SLSMC:conclusion}.

\section{Sample model and data structure}\label{sec:QI:SLSMC:Sample:Model}
We are interested in the development of sublinear time algorithms for computing statistical leverage scores, and we need to focus on how to input the given matrix and vector. Obviously, it is not possible to load the entire matrix and vector into memory because it costs at least linear time. In this work, we assume that matrices and vectors can be sampled according to some natural probability distributions, which are found in many applications of machine learning \cite{KP16,ET18,CLW18}.

We introduce the sampling assumption of the matrix in the following. Intuitively speaking, we assume that the column index can be sampled according to the norms of the column vectors, also an entry can be sampled according to the absolute values of the entry in that column.

\begin{Assumption}(\cite{CLW18})\label{QI:SLSMC:asu1}
Given a matrix $M \in \mathbb{R}^{m \times n}$, the following conditions hold.
\begin{enumerate}[1)]
  \item We can sample a column index $j \in [n]$ of $M$, where the probability of column $j$ being chosen is
  \begin{equation}\label{QI:SLSMC:P}
    P_j = \frac{\|M_{:,j}\|^2}{||M||_F^2}.
  \end{equation}
  \item For all $j \in [n]$, we can sample an index $i \in [m]$ according to $\mathcal{D}_{M_{:,j}}$, i.e.,  the probability of $i$ being chosen is
  \begin{equation}\label{QI:SLSMC:DM}
    \mathcal{D}_{M_{:,j}}(i) = \frac{|M_{i,j}|^2}{||M_{:,j}||^2}.
  \end{equation}
\end{enumerate}
\end{Assumption}

The above assumption actually is empirical. Frieze et al. \cite{FKV04} used these similar assumptions to present a sublinear algorithm for seeking low rank approximation. In \cite{ET18}, Tang also give a quantum-inspired classical algorithm for recommendation systems by using these similar assumptions. As pointed out in \cite{CLW18,CGLLTW20,CLS19,ET182}, there is a low-overhead data structure that satisfies the sampling assumption.
We first describe the data structure for a vector, then for a matrix.\\

\begin{Lemma}(Vector sample model)(\cite{CLW18})\label{QI:SLSMC:lemma:sample:V}
There exists a data structure storing a vector $v \in \mathbb{R}^{n}$ with $s$ nonzero entries in $O(s{\rm log}\,n)$ space, with the following properties:
\begin{enumerate}[a)]
  \item Querying and updating an entry of $v$ in $O({\rm log}\,n)$ time;
  \item Finding $||v||^2$ in $O(1)$ time;
  \item Sampling from $\mathcal{D}_{v}$ in $O({\rm log}\,n)$ time.
\end{enumerate}
\end{Lemma}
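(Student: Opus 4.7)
The plan is to build a succinct proxy $\hat{U} \in \mathbb{R}^{m \times k}$ for the top-$k$ portion of $U_A$ that can be evaluated row-by-row in sublinear time, and then show that row-norms of $\hat{U}$ approximate the leverage scores. Starting from the identity $U_A = A V_A \Sigma_A^{-1}$, I would replace $V_A$ and $\Sigma_A$ by a Frieze--Kannan--Vempala style low-rank approximation computed purely from the sampling oracles of Assumption \ref{QI:SLSMC:asu1}: sample $p$ rows of $A$ with probability proportional to squared row norms to form $S \in \mathbb{R}^{m \times p}$, then sample $p$ columns from $S$ to obtain $W \in \mathbb{R}^{p \times p}$, whose top-$k$ SVD lifts back through $S$ to produce $V \in \mathbb{R}^{p \times k}$ and $\Sigma \in \mathbb{R}^{k \times k}$ so that $\hat{U} = SV\Sigma^{-1}$ plays the role of $U_A$.

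For each query index $i$, I would estimate $\hat{U}_{i,:} = S_{i,:}V\Sigma^{-1}$ without materializing the full row $S_{i,:}$. For each $j \in [k]$, invoke the inner product estimation primitive (Lemma \ref{QI:SLSMC:Lemma:xy}) to approximate $\langle S_{i,:}, V_{:,j}\rangle$, assemble the outputs into a vector $\mathrm{t} \in \mathbb{R}^k$, form $\tilde{U}_{i,:} = \mathrm{t}^T \Sigma^{-1}$, and return $\tilde{\ell}_i = \|\tilde{U}_{i,:}\|^2$. Each inner product estimate runs in time logarithmic in $m,n$ under the data structure of Lemma \ref{QI:SLSMC:lemma:sample:V}, with sample count controlled by $\|S_{i,:}\|^2\|V_{:,j}\|^2/\epsilon'^2$ for a per-entry target error $\epsilon'$.

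The error analysis splits into two pieces that must each be driven below $\epsilon/2$. First, $\bigl|\ell_i - \|\hat{U}_{i,:}\|^2\bigr|$ is controlled by the FKV spectral/Frobenius guarantee on $A - \hat{A}$, translated to the left singular projector via a Davis--Kahan type perturbation argument; because this depends on the spectral gap at $\sigma_k$, combining $\|A - \hat{A}\|_F \le \epsilon_{\mathrm{FKV}}\|A\|_F$ with $1/\sigma_k \le \kappa/\|A\|$ produces the $\kappa$ and $\|A\|_F/\|A\|$ dependence. Second, $\bigl|\|\hat{U}_{i,:}\|^2 - \|\tilde{U}_{i,:}\|^2\bigr|$ is bounded by noting that entrywise errors of size $\epsilon'$ in $\mathrm{t}$ become errors of order $\epsilon'\kappa/\|A\|$ after applying $\Sigma^{-1}$, and squaring inflates these by $\|\hat{U}_{i,:}\| \le 1$. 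The main obstacle is tracking both amplifications simultaneously while choosing $\epsilon'$ small enough to absorb the $\Sigma^{-1}$ blow-up and the sum over $k$ coordinates, yet large enough that the per-estimate sampling cost remains polylogarithmic in $m,n$.

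Combining everything, selecting $p = \mathrm{poly}(k,\kappa,1/\epsilon,\|A\|_F/\|A\|)$ for the FKV stage and $\epsilon' = \epsilon/\mathrm{poly}(k,\kappa)$ for the inner product estimators, and distributing the failure probability $\delta$ across the FKV call and the $k$ estimations via a union bound, yields the claimed $O(\mathrm{poly}(k,\kappa,1/\epsilon,1/\delta,\|A\|_F/\|A\|,\log(mn)))$ bound on total queries and running time, matching the statement of the theorem.
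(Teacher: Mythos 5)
Your proposal does not address the statement at all. Lemma~\ref{QI:SLSMC:lemma:sample:V} is a purely data-structural claim: that a vector $v \in \mathbb{R}^n$ with $s$ nonzeros can be stored in $O(s \log n)$ space so that entry query/update costs $O(\log n)$, reading $\|v\|^2$ costs $O(1)$, and drawing a sample $i \sim \mathcal{D}_v$ (i.e.\ with probability $|v_i|^2/\|v\|^2$) costs $O(\log n)$. What you have written instead is a sketch of the correctness and complexity argument for the \emph{main algorithm} (essentially Theorem~\ref{QI:SLSeMC:thm:M2}), involving the FKV subsampling, the proxy $\hat{U} = SV\Sigma^{-1}$, inner-product estimation, and the error split into an FKV term and a $\Sigma^{-1}$-amplified estimation term. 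None of that constructs or analyzes the required data structure, and the lemma is in fact a prerequisite used inside the argument you sketched (it underlies Lemma~\ref{QI:SLSMC:Lemma:xy}), so the reasoning is circular as a proof of the lemma itself.

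A correct proof must exhibit the structure explicitly: build a complete binary tree over the index set $[n]$ whose leaves store $v_i^2$ together with $\mathrm{sgn}(v_i)$, and whose each internal node stores the sum of its children's stored squares, so that the root stores $\|v\|^2$ (this is the BST pictured in Figure~1 of the paper). Space is $O(s \log n)$ because only the $O(\log n)$ ancestors of nonzero leaves need to be materialized. Reading $\|v\|^2$ is one root lookup, $O(1)$. Querying or updating $v_i$ touches one root-to-leaf path, hence $O(\log n)$ node reads/writes, and on update each ancestor's stored sum is corrected along the way. Sampling from $\mathcal{D}_v$ is a root-to-leaf random walk: at a node with left-subtree sum $L$ and right-subtree sum $R$, descend left with probability $L/(L+R)$ and right otherwise; the leaf reached is distributed as $\mathcal{D}_v$ and the walk has depth $O(\log n)$. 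That is the entire content of the lemma; your proposal, while a reasonable high-level outline for the paper's main theorem, proves something else.
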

In \cite{ET18}, Tang gives a similar binary search tree (BST) diagram to analyze this data structure, as shown in the following figure.
\begin{figure}[H]
\centering
\begin{tikzpicture}[level distance=1.5cm,
  level 1/.style={sibling distance=3.5cm},
  level 2/.style={sibling distance=1.8cm}]
  \node {$||v||^2$}
    child {node {$v_1^2 + v_2^2$}
      child {node {$v_1^2$}
        child {node {${\rm sgn}(v_1)$}}
        }
      child {node {$v_2^2$}
        child {node {${\rm sgn}(v_2)$}}
        }
    }
    child {node {$v_3^2 + v_4^2$}
    child {node {$v_3^2$}
        child {node {${\rm sgn}(v_3)$}}
        }
      child {node {$v_4^2$}
        child {node {${\rm sgn}(v_4)$}}
        }
    };
\end{tikzpicture}
\caption{Binary search tree (BST) data structure for $v = (v_1, v_2, v_3, v_4)^{T} \in \mathbb{R}^{4}$.}
\end{figure}
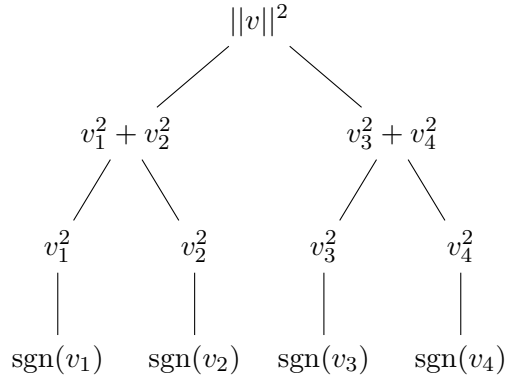

\begin{Lemma}(Matrix sample model)(\cite{CLW18})\label{QI:SLSMC:lemma:sample:M}
Given a matrix $A \in \mathbb{R}^{m \times n}$ with $s$ nonzero entries in $O(s{\rm log}\,mn)$ space,
with the following properties:
\begin{enumerate}[a)]
  \item Querying and updating an entry of $A$ in $O({\rm log}\,mn)$ time;
  \item Sampling from $A_{i,:}$ for any $i \in [m]$ in $O({\rm log}\,n)$ time;
  \item Sampling from $A_{:,j}$ for any $j \in [n]$ in $O({\rm log}\,m)$ time;
  \item Finding  $||A||_F$, $\|A_{i,:}\|$ and $\|A_{:,j}\|$ in $O(1)$ time.
\end{enumerate}
\end{Lemma}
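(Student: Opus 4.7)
The plan is to bootstrap the scalar data structure of Lemma~\ref{QI:SLSMC:lemma:sample:V} twice, once per row and once per column, and then glue the two copies together by synchronized updates. Concretely, I would maintain the following objects. (i) For every row $i\in[m]$ that has at least one nonzero entry, a BST $T_i^{r}$ that stores $A_{i,:}\in\mathbb{R}^n$ exactly as in Lemma~\ref{QI:SLSMC:lemma:sample:V}: each internal node holds the sum of the squared entries in its subtree and each leaf holds an entry value together with its sign, so the root holds $\|A_{i,:}\|^2$. (ii) Symmetrically, for every nonempty column $j$, a BST $T_j^{c}$ storing $A_{:,j}$. (iii) Two dictionaries (say, balanced BSTs keyed by row/column index) pointing to the existing $T_i^{r}$ and $T_j^{c}$, together with a scalar $F$ holding $\|A\|_F^2$.

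For the bounds. The total space is $\sum_i O(\mathrm{nnz}(A_{i,:})\log n)+\sum_j O(\mathrm{nnz}(A_{:,j})\log m)+O(s)=O(s\log(mn))$. (a) An entry query for $A_{i,j}$ descends $T_i^{r}$ in $O(\log n)$ time; an update from $\alpha$ to $\beta$ rewrites that leaf, re-bubbles the squared partial sums along the $O(\log n)$ ancestors in $T_i^{r}$, does the symmetric $O(\log m)$ work in $T_j^{c}$, and sets $F\leftarrow F+(\beta^2-\alpha^2)$, for a total of $O(\log(mn))$. (b) Sampling from $A_{i,:}$ invokes the walk of Lemma~\ref{QI:SLSMC:lemma:sample:V} on $T_i^{r}$: starting at the root, descend into the left or right child with probability proportional to its stored squared mass, stopping at a leaf; this takes $O(\log n)$ coin flips and, by that lemma, returns $j$ with probability $|A_{i,j}|^2/\|A_{i,:}\|^2=\mathcal{D}_{A_{i,:}}(j)$. (c) The mirror-image procedure on $T_j^{c}$ yields sampling from $A_{:,j}$ in $O(\log m)$. (d) Finally $\|A\|_F^2=F$, while $\|A_{i,:}\|^2$ and $\|A_{:,j}\|^2$ sit at the roots of $T_i^{r}$ and $T_j^{c}$, so all three are returned in $O(1)$ time.

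The main subtlety I expect is avoiding an $\Omega(m+n)$ space overhead in the sparse regime $s\ll m+n$: allocating a stub tree for every row and every column (including empty ones) would break the $O(s\log(mn))$ bound. This is exactly why the dictionaries in step (iii) are defined over \emph{nonempty} rows and columns only, so that a row or column tree is created lazily on its first nonzero insertion and deleted when its last nonzero is removed; since a balanced dictionary adds only $O(\log(mn))$ to each update, the total space stays at $O(s\log(mn))$ and the per-operation times are unchanged. Correctness of the sampling operations is then inherited directly from Lemma~\ref{QI:SLSMC:lemma:sample:V} applied to the row and column vectors, and the resulting distributions are precisely those demanded by Assumption~\ref{QI:SLSMC:asu1}. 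Beyond this book-keeping I do not anticipate a serious obstacle, since the lemma is essentially a two-dimensional augmentation of the vector BST.
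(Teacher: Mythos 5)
Your construction is sound for the four bullets as literally stated, but it departs from the data structure the paper actually has in mind (the nested tree of Figure~2, following \cite{CLW18,CGLLTW20}), and that departure leaves a gap that matters downstream. In the paper's structure there is a single tree for $A$ whose outer levels split the Frobenius mass $\|A\|_F^2$ across rows (so that internal nodes store partial sums of $\|A_{i,:}\|^2$), and each row's subtree is the vector BST of Lemma~\ref{QI:SLSMC:lemma:sample:V}; a second such tree is kept for $A^{T}$ to handle columns. Your design instead keeps $m+n$ \emph{disjoint} vector BSTs glued by plain lookup dictionaries plus a lone scalar $F=\|A\|_F^2$.

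The consequence is that your structure cannot sample a column index $j$ from the marginal distribution $P_j=\|A_{:,j}\|^2/\|A\|_F^2$ (nor a row index from $\|A_{i,:}\|^2/\|A\|_F^2$): the dictionaries as you describe them carry no prefix-sum information over the squared column norms, only pointers. That operation is exactly item~(1) of Assumption~\ref{QI:SLSMC:asu1} and is what step~\ref{QI:SLSMC:Subsampling:al:s1} of Algorithm~\ref{QI:SLSMC:Subsampling:a1} uses, so the data structure you built would not actually drive the algorithm, even though it satisfies the lemma's literal wording. The fix is small and essentially recovers the paper's nested tree: replace the column dictionary with a sum-tree whose internal nodes store partial sums of $\|A_{:,j}\|^2$ (root $=\|A\|_F^2$), and symmetrically for rows. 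Then a root-to-leaf walk samples the column index from $P_j$, and the rest of your argument (lazy creation of row/column subtrees, $O(\log(mn))$ per update, $O(s\log(mn))$ total space) goes through unchanged. As a minor side note, with a balanced-BST dictionary keyed by index, locating $T_i^{r}$ already costs $\Theta(\log m)$, so your claimed $O(1)$ for retrieving $\|A_{i,:}\|$ and $\|A_{:,j}\|$ in item~d) is really $O(\log(mn))$ unless you additionally keep direct-indexed pointer arrays; this is harmless for the paper's complexity claims but worth stating precisely.
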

Chia et al. \cite{CGLLTW20} present a similar binary search tree (BST) diagram for a matrix $A$ to analyze this data structure, as shown in the following figure.
\begin{figure}[H]
\begin{tikzpicture}[level distance=1.3cm,
  level 1/.style={sibling distance=8.8cm},
  level 2/.style={sibling distance=4.2cm},
  level 3/.style={sibling distance=2cm}]
  \node {$||A||_F^2$}
    child {node {$||A_{1,:}||^2$}
            child {node {$||A_{1,1}||^2+||A_{1,2}||^2$}
                child {node {$||A_{1,1}||^2$}
                    child {node {$\frac{A_{1,1}}{|A_{1,1|}}$}}
                }
                child {node {$||A_{1,2}||^2$}
                    child {node {$\frac{A_{1,2}}{|A_{1,2}|}$}}
                }
        }
            child {node {$||A_{1,3}||^2+||A_{1,4}||^2$}
                child {node {$||A_{1,3}||^2$}
                    child {node {$\frac{A_{1,3}}{|A_{1,3}|}$}}
                }
                child {node {$||A_{1,4}||^2$}
                    child {node {$\frac{A_{1,4}}{|A_{1,4}|}$}}
                }
        }
    }
    child {node {$||A_{2,:}||^2$}
        child {node {$||A_{2,1}||^2+||A_{2,2}||^2$}
                child {node {$||A_{2,1}||^2$}
                    child {node {$\frac{A_{2,1}}{|A_{2,1}|}$}}
                }
                child {node {$||A_{2,2}||^2$}
                    child {node {$\frac{A_{2,2}}{|A_{2,2}|}$}}
                }
        }
        child {node {$||A_{2,3}||^2+||A_{2,4}||^2$}
                child {node {$||A_{2,3}||^2$}
                    child {node {$\frac{A_{2,3}}{|A_{2,3}|}$}}
                }
                child {node {$||A_{2,4}||^2$}
                    child {node {$\frac{A_{2,4}}{|A_{2,4}|}$}}
                }
        }
    };
\end{tikzpicture}
\caption{Dynamic data structure for $A \in \mathbb{R}^{2 \times 4}$.}
\end{figure}
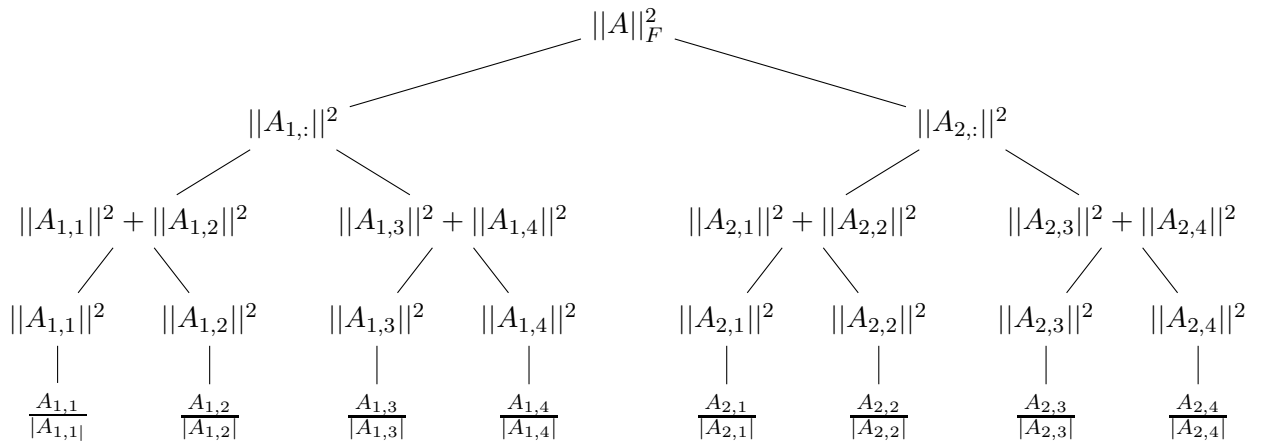

\section{Algorithm description}\label{sec:QI:SLSMC:Algorithm}
\subsection{Subsampling Algorithms}\label{sec:QI:SLSMC:Low:rank:Sample:Model}
 In this section, we show a subroutine (Algorithm \ref{QI:SLSMC:Subsampling:a1}) to generate the approximate left singular matrix, which approximates the left singular matrix $U_{A}$ of $A$. Firstly, let us recall the FKV algorithm \cite{FKV04}, which is a Monte-Carlo algorithm. It can find a good low rank approximation of a given matrix $A$. Without outputting the full matrix, it outputs a succinct description of the matrix $\widetilde{V}$ instead, where $\widetilde{V}$ is an approximation to the right singular matrix $V_{A}$ of matrix $A$, and the corresponding low rank approximation is $\widetilde{A}:= A \widetilde{V} \widetilde{V}^T$. Then, according to the definition of the statistical leverage scores of a matrix $A$, by a similar argument we can use the FKV algorithm to obtain the approximation left singular vectors $\widehat{U}$ of a given matrix $A$, and have the corresponding low rank approximation $\widehat{A} := \widehat{U} \widehat{U}^TA$.

We use FKV($A,k,\epsilon,\delta$) in \cite{FKV04} as follows, where the input matrix $A$ satisfies the sample model in Assumption \ref{QI:SLSMC:asu1}, $k$ is the input integer, $\epsilon$ is the error parameter, and $\delta$ is the failure probability.

\begin{Theorem}(\cite{FKV04})\label{QI:SLSMC:Thm:FKV1}
Given a matrix $A \in \mathbb{R}^{m \times n}$ in matrix sample model, $k \in \mathbb{N_{+}}$ and $\epsilon, \delta \in (0,1)$, FKV($A,k,\epsilon,\delta$) can output the approximate right singular matrix $\widetilde{V} \in \mathbb{R}^{n \times k}$ in $O({\rm poly}(k, \frac{1}{\epsilon}, {\rm log}\,\frac{1}{\delta}))$ samples and queries of $A$ with probability $1-\delta$, which satisfies
\begin{equation*}
    \|A \widetilde{V} \widetilde{V}^{T} - A\|_F^2 \leq \min \limits_{\widetilde{A}:\,{\rm rank}(\widetilde{A})\leq k} \|A - \widetilde{A} \|_F^2 + \epsilon \|A\|_F^2.
\end{equation*}
\end{Theorem}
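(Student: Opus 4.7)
The plan is to follow the classical Frieze--Kannan--Vempala strategy: two rounds of length-squared importance sampling collapse $A$ down to a small $p \times p$ matrix $W$, then the top-$k$ singular vectors of $W$ are lifted back to a succinct description of $\widetilde{V}$. The whole algorithm never materializes $\widetilde V$; it only stores $O(p)$ sampled rows and the SVD of $W$.

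First, I would draw $p = \mathrm{poly}(k,1/\epsilon,\log(1/\delta))$ row indices $i_1, \ldots, i_p$ from the distribution $\mathbb{P}[i_t=i] = \|A_{i,:}\|^2/\|A\|_F^2$, each sample costing $O(\log m)$ by Lemma \ref{QI:SLSMC:lemma:sample:M}, and form the rescaled matrix $R \in \mathbb{R}^{p\times n}$ with $R_{t,:} = A_{i_t,:}/\sqrt{p\,\mathbb{P}[i_t]}$. A one-line second-moment computation, using that the $R_{t,:}$ are i.i.d.\ and $\mathbb{E}[R^{T}R] = A^{T}A$, gives $\mathbb{E}\|R^{T}R-A^{T}A\|_F^2 \le \|A\|_F^4/p$. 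I would then repeat the procedure on the columns of $R$: draw $p$ column indices with probability $\|R_{:,j}\|^2/\|R\|_F^2$, rescale to form $W \in \mathbb{R}^{p\times p}$, and obtain the analogous bound $\mathbb{E}\|WW^{T}-RR^{T}\|_F^2 \le \|R\|_F^4/p$. Since $\mathbb{E}\|R\|_F^2 = \|A\|_F^2$, a Markov bound plus union bound, boosted by the standard median-of-independent-runs trick, converts both inequalities into high-probability statements with only $\log(1/\delta)$ overhead.

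Next, I would compute the SVD $W = X\Sigma_W Y^{T}$ exactly in time $O(p^3)$, retain the top $k$ left singular vectors $X_{:,1:k}$ with singular values $\sigma_1^W \ge \cdots \ge \sigma_k^W$, and \emph{define} the $t$-th approximate right singular vector of $A$ implicitly by $\widetilde{V}_{:,t} = R^{T}X_{:,t}/\sigma_t^W$. The reason this is the right object is structural: $WW^{T}\approx RR^{T}$ makes $X_{:,t}$ an approximate left singular vector of $R$, and then $R^{T}X_{:,t}/\sigma_t^W \approx v_t(R)$; since $R^{T}R\approx A^{T}A$, the right singular vectors of $R$ approximate those of $A$.

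The main obstacle, and the crux of the argument, is turning these qualitative approximations into the quantitative bound $\|A\widetilde V\widetilde V^{T} - A\|_F^2 \le \min_{\mathrm{rank}(\widetilde A)\le k}\|A-\widetilde A\|_F^2 + \epsilon\|A\|_F^2$. The plan is to first show that $\widetilde V$'s columns are approximately orthonormal, via $\widetilde V_{:,s}^{T}\widetilde V_{:,t} = X_{:,s}^{T}(RR^{T})X_{:,t}/(\sigma_s^W\sigma_t^W) \approx X_{:,s}^{T}(WW^{T})X_{:,t}/(\sigma_s^W\sigma_t^W) = \delta_{st}$, so that $P:=\widetilde V\widetilde V^{T}$ is an almost-projector and $\|A-AP\|_F^2 = \|A\|_F^2 - \sum_{t\le k}\|A\widetilde V_{:,t}\|^2 + O(\epsilon\|A\|_F^2)$. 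Each $\|A\widetilde V_{:,t}\|^2$ is then within $O(\epsilon\|A\|_F^2/k)$ of $(\sigma_t^W)^2$ by the same $R^{T}R\approx A^{T}A$ bound, and two applications of Weyl's inequality — one transferring singular values from $W$ to $R$ through $\|WW^{T}-RR^{T}\|_F$, the other from $R$ to $A$ through $\|R^{T}R-A^{T}A\|_F$ — give $\sum_{t\le k}(\sigma_t^W)^2 \ge \sum_{t\le k}\sigma_t(A)^2 - O(\epsilon\|A\|_F^2) = \|A\|_F^2 - \|A-A_k\|_F^2 - O(\epsilon\|A\|_F^2)$. Choosing $p$ a suitably large polynomial in $k$ and $1/\epsilon$ absorbs the hidden constants, completing the approximation bound. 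The resource cost is $O(p)$ samples and queries of $A$ plus $O(p^3)$ arithmetic for the SVD of $W$, which fits inside the claimed $\mathrm{poly}(k,1/\epsilon,\log(1/\delta))$ budget.
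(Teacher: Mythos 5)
The paper does not prove Theorem~\ref{QI:SLSMC:Thm:FKV1}; it is cited verbatim from Frieze--Kannan--Vempala \cite{FKV04}, and the paper only proves, for use in its own Algorithm~\ref{QI:SLSMC:Subsampling:a1}, the two concentration lemmas (Lemma~\ref{QI:SLSMC:Lemma:MTMNTN} and Lemma~\ref{QI:SLSMC:Lemma:MMTNNT}) that underpin the FKV argument. So there is no in-paper proof to compare against; your outline is a reconstruction of the original FKV proof, and the two second-moment bounds you derive do coincide with Lemmas~\ref{QI:SLSMC:Lemma:MTMNTN} and~\ref{QI:SLSMC:Lemma:MMTNNT}.

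The genuine gap in that reconstruction is the omission of the singular-value threshold that FKV applies before lifting. You define $\widetilde V_{:,t} = R^T X_{:,t}/\sigma_t^W$ for all $t \le k$ without restriction. But
$\|\widetilde V_{:,t}\|^2 = X_{:,t}^T R R^T X_{:,t}/(\sigma_t^W)^2 = 1 + X_{:,t}^T(RR^T - WW^T)X_{:,t}/(\sigma_t^W)^2$,
and the concentration bound controls $\|RR^T - WW^T\|$ only against $\|R\|_F^2$, not against $(\sigma_t^W)^2$. When $(\sigma_t^W)^2$ falls below $\|RR^T - WW^T\|$ --- which can happen, since the spectrum of $W$ may decay arbitrarily fast with no condition-number assumption in force --- $\|\widetilde V_{:,t}\|$ blows up, the ``almost-projector'' claim for $P = \widetilde V\widetilde V^T$ fails, and the identity $\|A - AP\|_F^2 \approx \|A\|_F^2 - \sum_{t}\|A\widetilde V_{:,t}\|^2$ is no longer even approximately true. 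FKV, and Tang's exposition in \cite{ET18}, avoid this by retaining only indices $t$ with $(\sigma_t^W)^2 \ge \gamma\|W\|_F^2$ for a threshold $\gamma = \Theta(\epsilon/k)$, accepting possibly fewer than $k$ output vectors and charging the discarded mass to the $\epsilon\|A\|_F^2$ slack. The paper's own Algorithm~\ref{QI:SLSMC:Subsampling:a1} sidesteps thresholding by a different device --- it assumes a condition number $\kappa$ and proves in Lemma~\ref{QI:SLSMC:Lemma:mmw} that $\sigma_{\rm min}(W)$ stays bounded below --- but Theorem~\ref{QI:SLSMC:Thm:FKV1} carries no condition-number hypothesis, so the threshold is essential there. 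A secondary concern: the median-of-runs trick you invoke to upgrade the Chebyshev $1/\delta$ into $\log(1/\delta)$ does not apply directly, since the output is a subspace rather than a scalar; one needs a verifiable acceptance test to pick a good run.
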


Similarly, if we run FKV($A^{T},k,\epsilon,\delta$), we can obtain the following equivalent form of theorem.
\begin{Theorem}(\cite{CLS19})\label{QI:SLSMC:Thm:FKV2}
Given a matrix $A \in \mathbb{R}^{m \times n}$ in matrix sample model, $k \in \mathbb{N_{+}}$ and $\epsilon, \delta \in (0,1)$, FKV($A^{T},k,\epsilon,\delta$) can output the approximate left singular matrix $\widehat{U} \in \mathbb{R}^{m \times k}$ in $O({\rm poly}(k, \frac{1}{\epsilon}, {\rm log}\,\frac{1}{\delta}))$ samples and queries of $A$ with probability $1-\delta$, which satisfies
\begin{equation*}
    \|\widehat{U} \widehat{U}^TA - A\|_F^2 \leq \min \limits_{\widehat{A}:\,{\rm rank}(\widehat{A})\leq k} \|A - \widehat{A}\|_F^2 + \epsilon \|A\|_F^2.
\end{equation*}
\end{Theorem}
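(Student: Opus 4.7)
The plan is to derive Theorem \ref{QI:SLSMC:Thm:FKV2} as a direct corollary of Theorem \ref{QI:SLSMC:Thm:FKV1} by a transposition argument, since the only difference between the two statements is whether FKV is invoked on $A$ (producing an approximate right singular matrix) or on $A^{T}$ (producing an approximate left singular matrix). First I would verify that if $A$ admits the sampling oracles of Lemma \ref{QI:SLSMC:lemma:sample:M}, then so does $A^{T}$: row-sampling of $A^{T}$ coincides with column-sampling of $A$ (item c), column-sampling of $A^{T}$ coincides with row-sampling of $A$ (item b), entry queries are symmetric, and the Frobenius and row/column norms are either preserved or merely swapped under transposition. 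Consequently the call FKV$(A^{T}, k, \epsilon, \delta)$ is well-defined and carries out in $O(\mathrm{poly}(k, 1/\epsilon, \log(1/\delta)))$ queries of $A$, with no overhead relative to running FKV on $A$ itself.

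Next I would apply Theorem \ref{QI:SLSMC:Thm:FKV1} with the input replaced by $A^{T} \in \mathbb{R}^{n \times m}$. That theorem produces, with probability at least $1-\delta$, a matrix $\widetilde{V}' \in \mathbb{R}^{m \times k}$ satisfying
\begin{equation*}
\|A^{T}\widetilde{V}'(\widetilde{V}')^{T} - A^{T}\|_F^2 \;\le\; \min_{\widetilde{B}:\,\mathrm{rank}(\widetilde{B})\le k} \|A^{T} - \widetilde{B}\|_F^2 \;+\; \epsilon\,\|A^{T}\|_F^2.
\end{equation*}
Relabelling $\widehat{U} := \widetilde{V}'$ and exploiting $\|M\|_F = \|M^{T}\|_F$ along with the identity
\begin{equation*}
(A^{T}\widehat{U}\widehat{U}^{T} - A^{T})^{T} \;=\; \widehat{U}\widehat{U}^{T}A - A,
\end{equation*}
the left-hand side becomes $\|\widehat{U}\widehat{U}^{T}A - A\|_F^2$, which is exactly the quantity required by the conclusion.

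For the right-hand side, transposition is a bijection on the set $\{\widetilde{B}:\mathrm{rank}(\widetilde{B})\le k\}$ and satisfies $\|A^{T} - \widetilde{B}\|_F = \|A - \widetilde{B}^{T}\|_F$, so the minimum equals $\min_{\widehat{A}:\,\mathrm{rank}(\widehat{A})\le k}\|A - \widehat{A}\|_F^2$. Combining this with $\|A^{T}\|_F = \|A\|_F$ delivers the stated bound verbatim, and the success probability $1-\delta$ and query/time complexity pass through unchanged from Theorem \ref{QI:SLSMC:Thm:FKV1}.

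There is essentially no technical obstacle here: the proof is a bookkeeping reduction in which everything rests on the symmetry of the matrix sample model under transposition (immediate from Lemma \ref{QI:SLSMC:lemma:sample:M}) and on the invariances of the Frobenius norm and of rank under transposition. The only point to be careful about is to ensure the set over which the minimum is taken on the right-hand side is the same after transposition, which is why I would explicitly note the bijection $\widetilde{B}\mapsto\widetilde{B}^{T}$ on rank-at-most-$k$ matrices rather than leaving it implicit.
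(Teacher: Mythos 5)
Your transposition argument is correct and is precisely the reasoning the paper intends: the paper itself offers no proof, merely remarking ``Similarly, if we run FKV($A^{T},k,\epsilon,\delta$), we can obtain the following equivalent form of theorem'' and citing \cite{CLS19}. Your spelled-out version --- verifying the sample-model symmetry via Lemma \ref{QI:SLSMC:lemma:sample:M}, relabelling the output $\widetilde{V}'$ as $\widehat{U}$, and invoking the invariance of Frobenius norm and rank under transposition together with the bijection $\widetilde{B}\mapsto\widetilde{B}^{T}$ on rank-at-most-$k$ matrices --- is the standard and complete justification of that remark.
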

Next, we give a subroutine to introduce how to gain the approximate left singular matrix $\hat{U}$, and the resulting compact representation allows for efficiently sampling any row of $\hat{U}$ and querying any of its elements.
\begin{algorithm}[H]
\caption{Quantum-inspired singular value decomposition (QiSVD) algorithm.}\label{QI:SLSMC:Subsampling:a1}
\begin{algorithmic}[1]
\REQUIRE The matrix $A \in \mathbb{R}^{m \times n}$ that satisfies the sample model and data structure, the condition number $\kappa$, $k \in \mathbb{N_{+}}$, $\epsilon \in (0,1)$, $\delta \in (0,1)$, $\omega = \frac{\|A\|^2 \epsilon^2}{196(\|A\|_F \kappa + \|A\|)^2}$, ${\theta \in \left(0, \frac{\omega\|A\|^2}{(4k+3+2\omega)\kappa^2\|A\|^2_F}\right]}$ and $p = \left\lceil\frac{1}{\theta^2 \delta}\right\rceil$.\\
\ENSURE  The approximate left singular matrix $\hat{U}$.\\
\STATE  Independently sample $p$ column indices $j_1, j_2, \cdots, j_p$ according to the probability distribution $\{P_1, P_2, \cdots, P_n\}$ defined in Assumption \ref{QI:SLSMC:asu1}; \label{QI:SLSMC:Subsampling:al:s1}
\STATE  Let $S \in \mathbb{R}^{m \times p}$ be the matrix formed by the normalized columns $\frac{{A}_{:,j_t}}{\sqrt{pP_{j_t}}}$ for $t \in [p]$, i.e., $S_{:,t} = \frac{{A}_{:,j_t}}{\sqrt{pP_{j_t}}}$; \label{QI:SLSMC:Subsampling:al:s2}
\STATE Independently sample $p$ row indices $i_1, i_2, \cdots, i_p$ according to the probability distribution $\{P_1', P_2', \cdots, P_m'\}$, where $P_i' = \sum\limits_{t=1}^p \frac{\mathcal{D}_{A_{:,j_t}}(i)}{p}$, and $j_1, j_2, \cdots, j_p$ are the column indices in step \ref{QI:SLSMC:Subsampling:al:s1}; \label{QI:SLSMC:Subsampling:al:s3}
\STATE Let $W \in \mathbb{R}^{p \times p}$ be the matrix formed by the normalized rows $\frac{S_{i_t,:}}{\sqrt{pP_{i_t}'}}$ for $t \in [p]$, i.e., $W_{t,:} = \frac{S_{i_t,:}}{\sqrt{pP_{i_t}'}}$; \label{QI:SLSMC:Subsampling:al:s4}
\STATE Compute the largest $k$ singular values $\sigma_1, \sigma_2, \cdots, \sigma_k$ of matrix $W$ and their corresponding right singular vectors $v_1, v_2, \cdots, v_k$;\label{QI:SLSMC:Subsampling:al:s5}
\STATE Output the approximation left singular matrix $\hat{U} = S V \Sigma^{-1}$, where $V = (v_1, v_2, \cdots, v_k) \in \mathbb{R}^{p \times k}$, and $\Sigma = {\rm diag}(\sigma_1, \sigma_2, \cdots, \sigma_k) \in \mathbb{R}^{k \times k}$.
\end{algorithmic}
\end{algorithm}
\begin{remark}
The core ideas and sampling techniques in Algorithm \ref{QI:SLSMC:Subsampling:a1} are the same as in prior work \cite{ET18,FKV04,GST18,JGS19,ET182}, but with the different parameters setting for choosing $p$ rows and columns. Such parameters can make the error between the matrix $A$ and the low-rank matrix approximation $\hat{U}\hat{U}^{T}A$ smaller.
\end{remark}
\begin{remark}
In the step \ref{QI:SLSMC:Subsampling:al:s3} of Algorithm \ref{QI:SLSMC:Subsampling:a1}, combing the definition $\mathcal{D}_{A_{:,j}}(i)$ in Assumption \ref{QI:SLSMC:asu1} with Eq.\eqref{QI:SLSMC:Eq:SFAF}, it follows that
\begin{equation}\label{QI:SLSMC:Eq:PI}
    P_i' = \sum\limits_{t=1}^p \frac{\mathcal{D}_{A_{:,j_t}}(i)}{p} = \frac{1}{\|A\|_F^2}\sum\limits_{t=1}^p \frac{|A_{i,j_t}|^2}{p\frac{\|A_{:,j_t}\|^2}{\|A\|_F^2}} = \frac{1}{\|A\|_F^2}\sum\limits_{t=1}^p \frac{|A_{i,j_t}|^2}{pP_{j_t}} = \frac{\|S_{i,:}\|^2}{\|S\|_F^2}.
\end{equation}
\end{remark}
\begin{remark}
Suppose that $U_{A}$ is the exact left singular matrix of $A$ defined in Definition \ref{QI:SLSMC:Def:L1}, and Algorithm \ref{QI:SLSMC:Subsampling:a1} returns its approximate left singular matrix $\hat{U}$, then with probability $1-\delta$ we can show that $\left\|\hat{U}\hat{U}^{T} - U_{A}U_{A}^{T}\right\| < \frac{1}{2}\epsilon$. The proof will be given in Theorem \ref{QI:SLSMC:Theorem:UI}.
\end{remark}
From Algorithm \ref{QI:SLSMC:Subsampling:a1}, the approximate left singular matrix $\hat{U}$ can be obtained in the following way. Let $S$ be the submatrix given by scaling the columns to $\frac{{A}_{:,j_t}}{\sqrt{pP_{j_t}}}$. This makes sense if we regard $S \in \mathbb{R}^{m \times p}$, $V \in \mathbb{R}^{p \times k}$, and $\Sigma \in \mathbb{R}^{k \times k}$ as our subsampled low rank approximations of $A$, $V_{A}$, and $\Sigma_{A}$ respectively (from $A$'s singular value decomposition in Definition \ref{QI:SLSMC:Def:L1}). The details are given as follows.
\begin{itemize}
  \item $S$, a submatrix whose columns are sampled from columns of $A$ and normalized to equal norm $\frac{||A||_F}{\sqrt{p}}$;
  \item $V$, an explicit $p$-by-$k$ orthonormal matrix;
  \item $\Sigma$, an explicit diagonal matrix whose diagonal entries are $\sigma_i$, where $i\in [k]$.
\end{itemize}

In the above Algorithm \ref{QI:SLSMC:Subsampling:a1}, the core ideas are $SS^{T} \approx AA^{T}$ and $W^{T}W \approx S^{T}S$. The following lemma gives a proof for $W^{T}W \approx S^{T}S$ in \cite{FKV04}, then we generalize the proof from $W^{T}W \approx S^{T}S$  to $SS^{T} \approx AA^{T}$.

\begin{Lemma}(\cite{FKV04})\label{QI:SLSMC:Lemma:MTMNTN}
Given a matrix $M \in \mathbb{R}^{m \times n}$, let $P' = \{P'_{1}, P'_{2}, \cdots, P'_{m}\}$ be a probability distribution on $[m]$ such that $P'_{i} = \frac{\|M_{i,:}\|^2}{\|M\|^2_F}, i \in [m]$. Let $(i_1, i_2, \cdots, i_p)$ be a sequence of $p$ independent samples from $[m]$, each chosen according to distribution $P'$. Let $N$ be the ${p \times n}$ matrix with
\begin{equation}\label{QI:SLSMC:Eq:NN}
    N_{t,:} = \frac{M_{i_t,:}} {\sqrt{pP'_{i_t}}}, \, t \in [p].
\end{equation}
Then, For all $\theta > 0$, it holds that
\begin{equation}\label{QI:SLSMC:Eq:Pmmnn}
    {\rm Pr} (\|M^{T}M-N^{T}N\|_F \geq \theta \|M\|_F^2 ) \leq \frac{1}{\theta^2 p}.
\end{equation}
\end{Lemma}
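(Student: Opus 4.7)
The plan is to recognize $N^TN$ as a sum of $p$ independent, identically distributed rank-one random matrices whose expectation is exactly $M^TM$, and then to bound the deviation via a second-moment computation combined with Markov's inequality. Concretely, I would write
\[
N^TN \;=\; \sum_{t=1}^p X_t, \qquad X_t := N_{t,:}^TN_{t,:} \;=\; \frac{M_{i_t,:}^TM_{i_t,:}}{p\,P'_{i_t}},
\]
and first verify that $\mathbb{E}[X_t] = M^TM/p$. This is a direct calculation: averaging over the choice $i_t \in [m]$ under the distribution $P'$ cancels the factor $P'_{i_t}$ in the denominator and leaves $\tfrac{1}{p}\sum_i M_{i,:}^TM_{i,:}=M^TM/p$, so by linearity $\mathbb{E}[N^TN]=M^TM$.

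Next I would estimate the Frobenius variance. Since the $X_t$ are i.i.d.\ and the Frobenius norm squared is the trace inner product, independence gives
\[
\mathbb{E}\bigl\|N^TN - M^TM\bigr\|_F^2 \;=\; \sum_{t=1}^p \mathbb{E}\bigl\|X_t - \mathbb{E}X_t\bigr\|_F^2 \;\leq\; \sum_{t=1}^p \mathbb{E}\|X_t\|_F^2.
\]
Since $X_t$ is rank-one, $\|X_t\|_F^2 = \|N_{t,:}\|^4$, and plugging in the sampling probabilities $P'_i=\|M_{i,:}\|^2/\|M\|_F^2$ gives
\[
\mathbb{E}\|X_t\|_F^2 \;=\; \sum_{i=1}^m P'_i\,\frac{\|M_{i,:}\|^4}{p^2 (P'_i)^2} \;=\; \frac{\|M\|_F^2}{p^2}\sum_{i=1}^m \|M_{i,:}\|^2 \;=\; \frac{\|M\|_F^4}{p^2}.
\]
Summing over $t\in[p]$ yields $\mathbb{E}\|N^TN-M^TM\|_F^2 \leq \|M\|_F^4/p$.

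Finally, I would apply Markov's inequality to the nonnegative random variable $\|N^TN-M^TM\|_F^2$:
\[
\Pr\!\bigl(\|N^TN-M^TM\|_F \geq \theta\|M\|_F^2\bigr) \;=\; \Pr\!\bigl(\|N^TN-M^TM\|_F^2 \geq \theta^2\|M\|_F^4\bigr) \;\leq\; \frac{1}{\theta^2 p},
\]
which is exactly \eqref{QI:SLSMC:Eq:Pmmnn}. The only step that requires care is the variance bound: the cancellation $P'_i / (P'_i)^2 = 1/P'_i$ together with the explicit form $P'_i \propto \|M_{i,:}\|^2$ is precisely what makes the second moment telescope into $\|M\|_F^4/p^2$; had the rows been sampled uniformly, or with any weights other than row-norm-squared, the bound would blow up by a factor of order $m$ or worse. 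This importance-sampling identity is the heart of the argument, and everything else is bookkeeping.
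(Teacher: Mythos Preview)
Your proof is correct and follows essentially the same strategy as the paper: the paper does not prove this particular lemma (it is cited from \cite{FKV04}) but proves the transposed analogue (Lemma~\ref{QI:SLSMC:Lemma:MMTNNT}) by the same unbiased-estimator-plus-second-moment argument, computing expectations and variances entrywise rather than at the level of the rank-one matrices $X_t$, and then invoking Chebyshev/Markov to get the tail bound. Your matrix-level bookkeeping and direct use of Markov on $\|N^TN-M^TM\|_F^2$ are slightly cleaner but otherwise identical in substance.
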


\begin{Lemma}\label{QI:SLSMC:Lemma:MMTNNT}
Given a matrix $M \in \mathbb{R}^{m \times n}$, let $P = \{P_{1}, P_{2}, \cdots, P_{n}\}$ be a probability distribution distribution on $[n]$ such that $P_{j} = \frac{\|M_{:,j}\|^2}{\|M\|^2_F}, j \in [n]$. Let $(j_1, j_2, \cdots, j_p)$ be a sequence of $p$ independent samples from $[n]$, each chosen according to distribution $P$. Let $N$ be the ${m \times p}$ matrix with
\begin{equation}\label{QI:SLSMC:Eq:NN}
    N_{:,t} = \frac{M_{:,j_t}}  {\sqrt{pP_{j_t}}}, \, t \in [p].
\end{equation}
Then, for all $\theta > 0$, it yields that
\begin{equation}\label{QI:SLSMC:Eq:Pmmnn}
    {\rm Pr} (\|MM^{T} - NN^{T}\|_F \geq \theta \|M\|_F^2 ) \leq \frac{1}{\theta^2 p}.
\end{equation}
\end{Lemma}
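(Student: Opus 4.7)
The plan is to reduce this statement directly to Lemma \ref{QI:SLSMC:Lemma:MTMNTN} by transposition: sampling columns of $M$ according to squared column norms is the same as sampling rows of $M^{T}$ according to squared row norms, so the two lemmas are formally dual. First I would set $\tilde{M} := M^{T} \in \mathbb{R}^{n \times m}$ and verify that the column distribution $P_{j} = \|M_{:,j}\|^{2}/\|M\|_{F}^{2}$ on $[n]$ coincides with the row-norm distribution $\|\tilde{M}_{j,:}\|^{2}/\|\tilde{M}\|_{F}^{2}$ required in the hypothesis of Lemma \ref{QI:SLSMC:Lemma:MTMNTN}. Then, using the same samples $j_{1},\dots,j_{p}$, I would form the $p \times m$ matrix $\tilde{N}$ with rows $\tilde{N}_{t,:} = \tilde{M}_{j_{t},:}/\sqrt{p P_{j_{t}}}$ and observe that $\tilde{N} = N^{T}$, where $N$ is the matrix defined in the present lemma.

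Once this dictionary is in place, the rest is essentially automatic. Since $\tilde{M}^{T}\tilde{M} = MM^{T}$, $\tilde{N}^{T}\tilde{N} = NN^{T}$, and $\|\tilde{M}\|_{F} = \|M\|_{F}$, applying the conclusion of Lemma \ref{QI:SLSMC:Lemma:MTMNTN} to the pair $(\tilde{M},\tilde{N})$ immediately yields the desired tail bound on $\|MM^{T}-NN^{T}\|_{F}$. The only ``obstacle'' is a small amount of careful bookkeeping to check that transposition sends row indices to column indices consistently and that the scaling factor $\sqrt{p P_{j_{t}}}$ is the same in both formulations; both checks are immediate because squared-norm sampling weights and the Frobenius norm are invariant under transposition. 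Should a self-contained argument be preferred instead of appealing to duality, one could alternatively mirror the second-moment computation of \cite{FKV04}: expand $\mathbb{E}\|MM^{T}-NN^{T}\|_{F}^{2}$ as a sum over the $p$ independent samples, use the identity $\mathbb{E}[N_{:,t}N_{:,t}^{T}] = \tfrac{1}{p}MM^{T}$ for each $t$, bound the resulting variance by $\|M\|_{F}^{4}/p$, and conclude with Markov's inequality.
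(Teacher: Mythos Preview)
Your proposal is correct. Your primary route---reducing to Lemma~\ref{QI:SLSMC:Lemma:MTMNTN} by setting $\tilde{M}=M^{T}$, $\tilde{N}=N^{T}$ and invoking the identities $\tilde{M}^{T}\tilde{M}=MM^{T}$, $\tilde{N}^{T}\tilde{N}=NN^{T}$, $\|\tilde{M}\|_{F}=\|M\|_{F}$---is a clean duality argument that the paper does \emph{not} take. Instead, the paper writes out a self-contained second-moment computation: it shows $\mathbb{E}[N_{a,:}N_{b,:}^{T}]=M_{a,:}M_{b,:}^{T}$ entrywise, bounds each variance, sums to obtain $\mathbb{E}\|MM^{T}-NN^{T}\|_{F}^{2}\leq \|M\|_{F}^{4}/p$, and finishes with a Chebyshev/Markov step. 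This is exactly the alternative you sketch at the end of your proposal. Your transposition argument is shorter and avoids repeating calculations already done in \cite{FKV04}; the paper's direct computation has the minor advantage of being self-contained and making the variance structure explicit. Either approach is fully rigorous, and the bookkeeping you flag (matching sampling weights and scaling under transposition) is indeed immediate.
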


\begin{proof}
Taking expectation for $NN^{T}$, it yields that
\begin{equation}\label{QI:SLSMC:Eq:mmnn}
    \begin{aligned}
       &~~~~\mathbb{E} (N_{a,:}N_{b,:}^{T}) = \sum_{t=1}^{p}\mathbb{E} (N_{a,j_t}N_{b,j_t})  \\
       & = \sum_{t=1}^{p}\sum_{j=1}^{n} P_{j} \frac{M_{a,j}} {\sqrt{pP_{j}}} \frac{M_{b,j}} {\sqrt{pP_{j}}}
       = \sum_{t=1}^{p}\sum_{j=1}^{n}  \frac{M_{a,j} M_{b,j} }{p}
       = \sum_{j=1}^{n} M_{a,j} M_{b,j}
       = M_{a,:}M_{b,:}^{T}.
    \end{aligned}
\end{equation}

Since $N_{a,:}N_{b,:}^{T} = \sum_{t=1}^{p} N_{a,j_t}N_{b,j_t}$ is the sum of $p$ independent random variables, using Eq.\eqref{QI:SLSMC:Eq:NN}, it follows that
\begin{equation}\label{QI:SLSMC:Eq:Vmmnn}
    \begin{aligned}
    &~~~~ {\rm Var}(N_{a,:}N_{b,:}^{T})
    = {\rm Var} \left(\sum_{t=1}^{p}N_{a,j_t}N_{b,j_t}\right) \\
        & = \sum_{t=1}^{p} {\rm Var} (N_{a,j_t}N_{b,j_t}) = \sum_{t=1}^{p} [ \mathbb{E}(N_{a,j_t} N_{b,j_t})^2 - (\mathbb{E}(N_{a,j_t}N_{b,j_t}))^2 ]\\
        & \leq \sum_{t=1}^{p} \mathbb{E}(N_{a,j_t}^2 N_{b,j_t}^2)  = \sum_{t=1}^{p}\sum_{j=1}^{n} P_{j} \frac{M_{a,j}^2M_{b,j}^2}{p^2 {P_{j}}^2}  \\
        & = \frac{\|M\|_F^2}{ p^2}\sum_{t=1}^{p}\sum_{j=1}^{n} \frac{M_{a,j}^2M_{b,j}^2}{\|M_{:,j}\|^2}  = \frac{\|M\|_F^2}{p} \sum_{j=1}^{n} \frac{M_{a,j}^2M_{b,j}^2}{\|M_{:,j}\|^2}.
    \end{aligned}
\end{equation}
Finally, taking expectation for the whole $\|MM^{T} - NN^{T}\|_F^2$, we have
\begin{equation}
    \begin{aligned}
        \mathbb{E}(\|MM^{T} - NN^{T}\|_F^2)
        & = \sum_{a,b=1}^{m} \mathbb{E} [ (N_{a,:}N_{b,:}^{T} - M_{a,:}M_{b,:}^{T})^2 ] \\
        & = \sum_{a,b=1}^{m} \mathbb{E} [ ( N_{a,:}N_{b,:}^{T} - \mathbb{E}(N_{a,:}N_{b,:}^{T}) )^2 ] \ \ \ \ ({\rm using}\, {\rm the}\, {\rm result}\, {\rm of}\, {\rm Eq}.\eqref{QI:SLSMC:Eq:mmnn})  \\
        & = \sum_{a,b=1}^{m} {\rm Var}(N_{a,:}N_{b,:}^{T}) \\
        & \leq \frac{\|M\|_F^2}{p} \sum_{j=1}^{n} \frac{1}{\|M_{:,j}\|^2} \sum_{a,b=1}^{m}M_{a,j}^2M_{b,j}^2 \ \ \  ({\rm using}\, {\rm the}\, {\rm result}\, {\rm of}\, {\rm Eq}.\eqref{QI:SLSMC:Eq:Vmmnn})  \\
        & = \frac{\|M\|_F^2}{p} \sum_{j=1}^{n} \frac{1}{\|M_{:,j}\|^2} \left(\sum_{a=1}^{m}M_{a,j}^2\right)^2 = \frac{\|M\|_F^4}{p}.
    \end{aligned}
\end{equation}
Using the fact that for any random variable $x$, ${\rm Var}(x) = \mathbb{E}(x^2) - [\mathbb{E}(x)]^2 \leq \mathbb{E}(x^2)$, we obtain $${\rm Var}(\|MM^{T} - NN^{T}\|_F) \leq \frac{\|M\|_F^4}{p}.$$

Combining with the result of Chebyshev's inequality, it yields that
\begin{equation*}
    {\rm Pr} (\|MM^{T} - NN^{T}\|_F \geq \theta \|M\|_F^2) \leq  \frac{1}{(\theta \sqrt{p})^2} = \frac{1}{\theta^2 p}.
\end{equation*}
\hfill
\end{proof}
Lemma \ref{QI:SLSMC:Lemma:MTMNTN} and \ref{QI:SLSMC:Lemma:MMTNNT} imply the following relations $W^{T}W \approx S^{T}S$ and $SS^{T} \approx AA^{T}$, respectively. More precisely, using the Lemma \ref{QI:SLSMC:Lemma:MTMNTN} and $p = \lceil\frac{1}{\theta^2 \delta}\rceil$ in Algorithm \ref{QI:SLSMC:Subsampling:a1}, it holds that
$${\rm Pr} (\|S^{T}S - W^{T}W\|_F \leq \theta \|S\|_F^2) \geq 1 - \delta.$$
That is, with probability at least $1 - \delta$, we have
\begin{equation}\label{QI:SLSMC:Eq:SSWW}
    \|S^{T}S - W^{T}W\|_F \leq \theta \|S\|_F^2.
\end{equation}
Similarly, according to Lemma \ref{QI:SLSMC:Lemma:MMTNNT}, it follows that
$${\rm Pr} (\|AA^{T} - SS^{T}\|_F \leq \theta \|A\|_F^2) \geq 1 - \delta.$$
Also, with probability at least $1- \delta$, we obtain
\begin{equation}\label{QI:SLSMC:Eq:AASS}
    \|AA^{T} - SS^{T}\|_F \leq \theta \|A\|_F^2.
\end{equation}

\subsection{Quantum-inspired Statistical Leverage Scores Algorithm}\label{sec:QI:SLSMC:Algorithm:FASLS}
By using the data structure in Lemma \ref{QI:SLSMC:lemma:sample:V}, the inner product of two vectors can be estimated quickly.

\begin{Lemma}(\cite{ET18})\label{QI:SLSMC:Lemma:xy}
Given $x, y \in \mathbb{R}^{n}$ in vector sample model, and query access to $x$ and $y$, the ability to sample from $\mathcal{D}_x$, one can approximate $\langle x, y \rangle$ to additive error $\xi \|x\|\|y\|$ with at least $1 - \eta$ probability using $O(\frac{1}{\xi^2}\, {\rm log}\frac{1}{\eta})$ queries and samples.
\end{Lemma}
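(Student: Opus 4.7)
The plan is to prove this via the standard importance-sampling estimator for inner products, coupled with a median-of-means amplification. The natural single-shot estimator is obtained by sampling one index $i \sim \mathcal{D}_x$ (which is possible by hypothesis) and forming
\begin{equation*}
Z = \|x\|^2 \cdot \frac{y_i}{x_i},
\end{equation*}
which uses one sample from $\mathcal{D}_x$ and one query to $y$ (and one query to $x$), hence $O(1)$ access operations per copy. I would first compute the mean: $\mathbb{E}[Z] = \sum_i \mathcal{D}_x(i) \|x\|^2 y_i/x_i = \sum_i |x_i|^2 y_i/x_i = \sum_i x_i y_i = \langle x, y\rangle$, so the estimator is unbiased. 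Then I would bound the second moment: $\mathbb{E}[Z^2] = \sum_i (|x_i|^2/\|x\|^2) \cdot \|x\|^4 y_i^2/x_i^2 = \|x\|^2 \|y\|^2$, whence $\mathrm{Var}(Z) \leq \|x\|^2 \|y\|^2$.

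Next I would amplify accuracy by averaging and then boost the success probability by the median trick. Concretely, let $s_1 = \lceil 6/\xi^2 \rceil$ and $s_2 = \lceil C \log(1/\eta)\rceil$ for an absolute constant $C$. Draw $s_1 s_2$ independent copies of $Z$, partition them into $s_2$ groups of size $s_1$, and form the empirical mean $\bar{Z}_\ell$ within each group $\ell \in [s_2]$. By Chebyshev's inequality applied to each group, $\mathrm{Pr}[|\bar{Z}_\ell - \langle x,y\rangle| \geq \xi \|x\|\|y\|] \leq \mathrm{Var}(Z)/(s_1 \xi^2 \|x\|^2\|y\|^2) \leq 1/6$, so each individual mean lies within $\xi\|x\|\|y\|$ of the true inner product with probability at least $5/6$.

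Finally, I would output $\tilde{Z} = \mathrm{median}(\bar{Z}_1, \ldots, \bar{Z}_{s_2})$. If $\tilde{Z}$ were more than $\xi\|x\|\|y\|$ away from $\langle x, y\rangle$, then at least half of the $\bar{Z}_\ell$ would be bad, i.e., a sum of $s_2$ independent Bernoulli$(1/6)$ variables would be at least $s_2/2$; by a Chernoff bound this has probability at most $\eta$ provided $C$ is chosen appropriately. The total number of samples from $\mathcal{D}_x$ and queries to $x,y$ is $s_1 s_2 = O\!\left(\frac{1}{\xi^2}\log\frac{1}{\eta}\right)$, matching the claim.

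The only mildly subtle point, and what I would treat as the main obstacle, is the well-definedness of $Z$ when $x_i = 0$: under $\mathcal{D}_x$ such indices occur with probability zero, so the estimator is defined almost surely, but when writing the argument one must remark that (for example) the contribution of zero-valued coordinates can safely be set to $0$ in the estimator without altering either $\mathbb{E}[Z]$ or the second-moment bound. With that caveat handled, the chain Chebyshev-plus-median-of-means is entirely routine.
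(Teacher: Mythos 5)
Your proof is correct and is essentially the standard argument for this result (the paper itself does not prove it but cites Tang \cite{ET18}, where the proof follows the same importance-sampling estimator $Z = \|x\|^2 y_i / x_i$ with median-of-means amplification). The moment computations, the Chebyshev step for the group means, and the Chernoff step for the median are all accurate, and your remark on the $x_i = 0$ issue correctly handles the only notational subtlety.
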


We present a randomized algorithm for computing statistical leverage scores which is called quantum-inspired statistical leverage scores (QiSLS) algorithm. Especially, the input matrix $A$ of QiSLS algorithm is given by matrix sample model, which is realized via a data structure described in Section \ref{sec:QI:SLSMC:Sample:Model}. The statistical leverage scores can be returned by QiSLS algorithm in time polynomial logarithmic of the matrix size.
\begin{algorithm}[!htb]
\caption{Quantum-inspired statistical leverage scores (QiSLS) algorithm.}\label{QI:Algorithm:SLSMC:FASLS:a2}
\begin{algorithmic}[1]
\REQUIRE The matrix $A \in \mathbb{R}^{m \times n}$ satisfying the sample model and data structure, the condition number $\kappa$, the parameters $(\epsilon, \delta, k)$ in the specified range of Algorithm \ref{QI:SLSMC:Subsampling:a1}, $\xi = \frac{1}{\sqrt{k}}\left(\sqrt{\frac{2\epsilon \|A\|^2}{\kappa^2(4k+5)\|A\|^2_F}+1}-1\right)$ and $i \in [m]$.
\ENSURE The statistical leverage scores $\tilde{\ell}_{i}$.\\
\STATE Run Algorithm \ref{QI:SLSMC:Subsampling:a1} to gain the corresponding approximate left singular matrix $\hat{U} = S V \Sigma^{-1}$, i.e., $\hat{U}_{i,:} = S_{i,:} V \Sigma^{-1}$, $i \in [m]$.\label{QI:Algorithm:SLSMC:FASLS:a2:s1}
\STATE For all $j \in [k]$, $S_{i,:} V_{:,j}$ can be estimated by Lemma \ref{QI:SLSMC:Lemma:xy}, each with a precision $\xi \|S\|_F$ and success probability $(1-\delta)^{\frac{1}{k}}$. Let $\mathrm{t}^{T}$ be the resulting $1$-by-$k$ vector.\label{QI:Algorithm:SLSMC:FASLS:a2:s2}
\STATE Calculate $\tilde{U}_{i,:} = \mathrm{t}^{T} \Sigma^{-1}$ with element-wise division.\label{QI:Algorithm:SLSMC:FASLS:a2:s3}
\STATE Compute the inner product $\tilde{\ell}_i = \tilde{U}_{i,:}\tilde{U}^{T}_{i,:}$. \label{QI:Algorithm:SLSMC:FASLS:a2:s4}
\end{algorithmic}
\end{algorithm}

\section{Algorithm Analysis}\label{sec:QI:SLSMC:Algorithm:Analysis}
\begin{Lemma}(Weyl's inequality \cite{HW12})\label{QI:SLSMC:Lemma:SmSn}
For two matrices $M \in \mathbb{C}^{m \times n}$, $N \in \mathbb{C}^{m \times n}$ and any $i\in [{\rm min}(m,n)]$, $|\sigma_{i}(M) - \sigma_{i}(N)| \leq \|M - N\|$.
\end{Lemma}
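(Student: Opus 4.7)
The plan is to prove Weyl's inequality for singular values by reducing it to the Courant--Fischer max-min characterization, together with a one-line use of the triangle inequality and the definition of the spectral norm.

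First I would record the variational formula: for any $A \in \mathbb{C}^{m \times n}$ and any $i \in [\min(m,n)]$,
$$\sigma_i(A) = \max_{\substack{V \subseteq \mathbb{C}^n \\ \dim V = i}} \min_{\substack{x \in V \\ \|x\|=1}} \|Ax\|.$$
This is the standard max-min description of the $i$-th largest singular value, derivable from the spectral theorem applied to the Hermitian positive semidefinite matrix $A^*A$ combined with the identity $\sigma_i(A) = \sqrt{\lambda_i(A^*A)}$.

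The main step is the following. Fix any subspace $V \subseteq \mathbb{C}^n$ with $\dim V = i$. For every unit vector $x \in V$, the triangle inequality and the definition of the operator norm give
$$\|Mx\| \leq \|Nx\| + \|(M-N)x\| \leq \|Nx\| + \|M-N\|.$$
Taking the minimum over unit $x \in V$ preserves the inequality, so
$$\min_{\substack{x \in V \\ \|x\|=1}} \|Mx\| \leq \min_{\substack{x \in V \\ \|x\|=1}} \|Nx\| + \|M-N\|.$$
Taking the maximum over all $i$-dimensional subspaces $V \subseteq \mathbb{C}^n$ on both sides and invoking the variational formula yields $\sigma_i(M) \leq \sigma_i(N) + \|M-N\|$.

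Since the argument is symmetric in $M$ and $N$, interchanging their roles gives the matching bound $\sigma_i(N) \leq \sigma_i(M) + \|M-N\|$, and combining the two yields $|\sigma_i(M) - \sigma_i(N)| \leq \|M-N\|$. There is no substantial obstacle in this proof: the only mild subtlety is ensuring the max-min formula is stated for the correct index range, which follows because $A^*A \in \mathbb{C}^{n \times n}$ is Hermitian with exactly $\min(m,n)$ possibly nonzero eigenvalues $\sigma_1(A)^2 \geq \cdots \geq \sigma_{\min(m,n)}(A)^2$, so the Courant--Fischer formula for eigenvalues of $A^*A$ translates directly to the claimed formula for singular values.
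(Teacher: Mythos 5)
The paper does not prove this lemma; it is stated as a classical fact and cited to Weyl's 1912 paper, so there is no in-paper proof to compare against. Your proof is correct and is the standard argument: the Courant--Fischer max-min characterization $\sigma_i(A) = \max_{\dim V = i}\min_{x\in V,\ \|x\|=1}\|Ax\|$, the pointwise bound $\|Mx\|\le\|Nx\|+\|M-N\|$ for unit vectors, and the observation that $\min$ and $\max$ preserve an inequality with an additive constant, followed by symmetry in $M$ and $N$. The only thing worth making explicit is the inner step ``taking the minimum preserves the inequality'': if $f(x)\le g(x)+c$ for all unit $x\in V$, then picking $x^\ast$ achieving $\min g$ gives $\min f \le f(x^\ast)\le g(x^\ast)+c=\min g + c$; you state the conclusion but a reader might appreciate this one line spelled out. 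Otherwise the argument is complete and matches the textbook derivation (e.g.\ Horn and Johnson), including the correct restriction of the index to $i\in[\min(m,n)]$ via the eigenvalues of the Hermitian matrix $A^\ast A$.
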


\begin{Lemma}\label{QI:SLSMC:Lemma:mmw}
Given a matrix $A \in \mathbb{R}^{m \times n}$ with ${\rm rank}(A) = r$ satisfying the sample model and data structure, the condition number $\kappa$, and parameters $(p, \omega, \delta, k)$ in the specified range of Algorithm \ref{QI:SLSMC:Subsampling:a1}, Algorithm \ref{QI:SLSMC:Subsampling:a1} outputs the matrix $W \in \mathbb{R}^{p \times p}$, and with probability at least $1 - \delta$, it holds that
\begin{equation}
    \sigma_{\rm min}(W) \geq \sqrt{\frac{4k+3}{4k+3+2\omega}}\frac{\|A\|}{\kappa}.
\end{equation}
\end{Lemma}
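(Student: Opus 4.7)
The plan is to bound $\sigma_k(W)$, the $k$-th largest singular value of $W$, by chaining two spectral approximations: $W^T W \approx S^T S$ from the row subsampling (step~\ref{QI:SLSMC:Subsampling:al:s4}, via Lemma~\ref{QI:SLSMC:Lemma:MTMNTN}), and $SS^T \approx AA^T$ from the column subsampling (steps~\ref{QI:SLSMC:Subsampling:al:s1}--\ref{QI:SLSMC:Subsampling:al:s2}, via Lemma~\ref{QI:SLSMC:Lemma:MMTNNT}). Using $p = \lceil 1/(\theta^2 \delta) \rceil$ together with a union bound (absorbing a harmless factor of two into $\delta$ if one insists on the exact failure probability), with probability at least $1-\delta$ both \eqref{QI:SLSMC:Eq:SSWW} and \eqref{QI:SLSMC:Eq:AASS} hold simultaneously, i.e. $\|S^T S - W^T W\|_F \leq \theta \|S\|_F^2$ and $\|AA^T - SS^T\|_F \leq \theta \|A\|_F^2$.

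A quick preliminary computation gives $\|S\|_F^2 = \|A\|_F^2$: since $S_{:,t} = A_{:,j_t}/\sqrt{p P_{j_t}}$ and $P_{j_t} = \|A_{:,j_t}\|^2 / \|A\|_F^2$, each of the $p$ columns of $S$ contributes exactly $\|A\|_F^2/p$ to the squared Frobenius norm. Consequently both concentration bounds carry the same right-hand side $\theta \|A\|_F^2$, which is the crucial uniformity we will exploit.

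Next I would apply Weyl's inequality (Lemma~\ref{QI:SLSMC:Lemma:SmSn}) to the two pairs of Gram matrices $W^T W$ vs.\ $S^T S$ and $SS^T$ vs.\ $AA^T$, using that the singular values of these positive semidefinite matrices are the squared singular values of $W$, $S$, and $A$ respectively, together with the trivial inequality $\|\cdot\|\le\|\cdot\|_F$. This yields $|\sigma_i^2(W) - \sigma_i^2(S)| \leq \theta \|A\|_F^2$ and $|\sigma_i^2(S) - \sigma_i^2(A)| \leq \theta \|A\|_F^2$ for every admissible index $i$, which chain to $\sigma_i^2(W) \geq \sigma_i^2(A) - 2\theta \|A\|_F^2$.

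Specializing to $i = k$ (assuming $k \leq r$, as implicit from the setup) and using $\sigma_k(A) \geq \sigma_{\min}(A) = \|A\|/\kappa$ together with the prescribed bound $\theta \leq \omega \|A\|^2 / \bigl((4k+3+2\omega)\kappa^2 \|A\|_F^2\bigr)$, I obtain
$$\sigma_k^2(W) \geq \frac{\|A\|^2}{\kappa^2} - \frac{2\omega \|A\|^2}{(4k+3+2\omega)\kappa^2} = \frac{4k+3}{4k+3+2\omega}\cdot\frac{\|A\|^2}{\kappa^2},$$
and taking square roots delivers the claimed inequality. The main obstacle is the bookkeeping of the two independent failure events in the union bound, which forces a minor adjustment of constants in $p$; a secondary subtlety is notational, since the derivation naturally produces a lower bound on $\sigma_k(W)$, which I read as the intended meaning of $\sigma_{\min}(W)$ in this context because it is precisely the quantity the algorithm inverts when forming $\Sigma^{-1}$ in step~\ref{QI:SLSMC:Subsampling:al:s5}.
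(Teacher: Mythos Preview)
Your argument is correct and follows essentially the same route as the paper: both proofs apply Weyl's inequality twice (to $SS^T$ vs.\ $AA^T$ and to $W^TW$ vs.\ $S^TS$), use $\|S\|_F=\|A\|_F$, and chain the two $\theta\|A\|_F^2$ perturbations before substituting the prescribed upper bound on $\theta$. The only cosmetic difference is that the paper tracks the smallest \emph{nonzero} singular value (working at indices $\tilde r=\operatorname{rank}(S)$ and $\hat r=\operatorname{rank}(W)$) rather than at index $k$; as you correctly observe, $\sigma_k(W)$ is the quantity actually inverted in $\Sigma^{-1}$, and since $\sigma_k(W)\ge\sigma_{\hat r}(W)$ both readings yield the stated bound.
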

\begin{proof}
Motivated by the work in \cite{CLW18,JGS19}, and by Lemma \ref{QI:SLSMC:Lemma:SmSn} we have
\begin{equation}\label{QI:SLSMC:Eq:sigmaAA}
  |\sigma_{\tilde{r}}(AA^{T}) - \sigma_{\tilde{r}}(SS^{T})|  \leq \|AA^{T} - SS^{T}\|,
\end{equation}
where $\tilde{r} = {\rm rank}(S)$. Combining Eq.\eqref{QI:SLSMC:Eq:AASS} with \eqref{QI:SLSMC:Eq:sigmaAA}, according to ${\theta \in \left(0, \frac{\omega\|A\|^2}{(4k+3+2\omega)\kappa^2\|A\|^2_F}\right]}$ in Algorithm \ref{QI:SLSMC:Subsampling:a1}, with probability at least $1-\delta$, it holds that

\begin{equation}\label{QI:SLSMC:Eq:sigmaSSTAAT}
    |\sigma_{\tilde{r}}(SS^{T}) - \sigma_{\tilde{r}}(AA^{T})| \leq \|AA^{T} - SS^{T}\| \leq \|AA^{T} - SS^{T}\|_F \leq \theta \|A\|_F^2 \leq  \frac{\omega\|A\|^2}{(4k+3+2\omega)\kappa^2}.
\end{equation}
Due to the definition of $\sigma_{\rm min}(A) = \frac{\|A\|}{\kappa}$, we have $\sigma_{\rm min}(AA^{T}) = \frac{\|A\|^2}{\kappa^2}$. Since $S$ is formed by sampling from the columns of matrix $A$, we obtain ${\rm rank}(SS^{T}) \leq {\rm rank}(AA^{T})$, i.e., $\tilde{r} \leq r$.
According to Eq.\eqref{QI:SLSMC:Eq:sigmaSSTAAT}, it follows that
\begin{equation}\label{QI:SLSMC:Eq:sigmaminS}
\sigma_{\tilde{r}}(SS^{T}) \geq \sigma_{\tilde{r}}(AA^{T}) - \frac{\omega\|A\|^2}{(4k+3+2\epsilon)\kappa^2} \geq \sigma_{\rm min}(AA^{T}) - \frac{\omega\|A\|^2}{(4k+3+2\omega)\kappa^2} = \frac{(4k+3+\omega)\|A\|^2}{(4k+3+2\omega)\kappa^2}.
\end{equation}

For any column $t \in [p]$ of matrix $S$, using the step \ref{QI:SLSMC:Subsampling:al:s2} in Algorithm \ref{QI:SLSMC:Subsampling:a1}, it yields that
\begin{equation}\label{QI:SLSMC:Eq:SFAF}
    \|S\|_F^2 = \sum_{t=1}^{p}\|S_{:,t}\|^2 = \sum_{t=1}^{p}\left\|\frac{A_{:,j_t}}{\sqrt{pP_{j_t}}}\right\|^2 = \sum_{t=1}^{p}\frac{\|A_{:,j_t}\|^2}{p\frac{\|A_{:,j_t}\|^2}{||A||_F^2}} = \sum_{t=1}^{p}\frac{\|A\|_F^2}{p} = \|A\|_F^2.
\end{equation}

By a similar analysis, combining Eq.\eqref{QI:SLSMC:Eq:SSWW} with \eqref{QI:SLSMC:Eq:SFAF}, with probability at least $1-\delta$, and by Lemma \ref{QI:SLSMC:Lemma:SmSn} we obtain
\begin{equation}\label{QI:SLSMC:Eq:sigmaSTSWTW}
    \begin{aligned}
     |\sigma_{\hat{r}}(S^{T}S) - \sigma_{\hat{r}}(W^{T}W)|
     \leq  \|S^{T}S - W^{T}W\|
     \leq  \|S^{T}S - W^{T}W\|_F
     \leq  \theta \|S\|_F^2
     \leq \frac{\omega\|A\|^2}{(4k+3+2\omega)\kappa^2},
    \end{aligned}
\end{equation}
where $\hat{r} = {\rm rank}(W)$, ${\theta \in \left(0, \frac{\omega\|A\|^2}{(4k+3+2\omega)\kappa^2\|A\|^2_F}\right]}$. Since $W$ is formed by sampling from the rows of matrix $S$, we obtain ${\rm rank}(W^{T}W) \leq {\rm rank}(S^{T}S)$, i.e., $\hat{r} \leq \tilde{r}$. Based on $\sigma_{\tilde{r}}(SS^{T}) \geq \frac{(4k+3+\omega)\|A\|^2}{(4k+3+2\omega)\kappa^2}$ in Eq.\eqref{QI:SLSMC:Eq:sigmaminS} and $\sigma_{\tilde{r}}(S^{T}S) = \sigma_{\tilde{r}}(SS^{T})$, the inequality \eqref{QI:SLSMC:Eq:sigmaSTSWTW} gives
\begin{equation}\label{QI:SLSMC:Eq:sigmaWWTW}
\sigma_{\hat{r}}(W^{T}W) \geq \sigma_{\hat{r}}(S^{T}S) - \frac{\omega\|A\|^2}{(4k+3+2\omega)\kappa^2} \geq \sigma_{\tilde{r}}(S^{T}S) - \frac{\omega\|A\|^2}{(4k+3+2\omega)\kappa^2} \geq \frac{(4k+3)\|A\|^2}{(4k+3+2\omega)\kappa^2}.
\end{equation}
Since $\sigma_{\rm min}(W^{T}W) = \sigma_{\hat{r}}(W^{T}W)$, we obtain
\begin{equation}\label{QI:SLSMC:Eq:sigmaW}
    \sigma_{\rm min}(W) \geq \sqrt{\frac{4k+3}{4k+3+2\omega}}\frac{\|A\|}{\kappa}.
\end{equation}
\hfill
\end{proof}

To summarize, in the problem of computing statistical leverage scores $\ell_{i}$, we need to calculate  $(U_{A})_{i,:}$ for a given matrix $U_{A}$ and an index $i \in [m]$. In Algorithm \ref{QI:SLSMC:Subsampling:a1} we employ right singular vectors of $W$ to approximate right singular vectors of $S$, and use the left singular vectors of $S$ to approximate left singular vectors of $A$. Then, we achieve the approximate left singular matrix $\hat{U} \in \mathbb{R}^{m \times k}$, which will suffice to make a good approximation to the exact left singular matrix $U_{A} \in \mathbb{R}^{m \times r}$ of $A$ defined in Definition \ref{QI:SLSMC:Def:L1}. Note that the following Theorem \ref{QI:SLSMC:Theorem:UI} will give a detailed proof of the approximation relation. We start with several lemmas to facilitate the proof of Theorem \ref{QI:SLSMC:Theorem:UI}.

\begin{Lemma}\label{QI:SLSMC:Lemma:B}
Given a matrix $A \in \mathbb{R}^{m \times n}$ satisfies the sample model and data structure, the parameters $(\omega, \delta, k)$ in the specified range of Algorithm \ref{QI:SLSMC:Subsampling:a1}, Algorithm \ref{QI:SLSMC:Subsampling:a1} outputs the approximate left singular matrix $\hat{U}$, then with probability $1- \delta$, it holds that
\begin{equation}\label{QI:SLSMC:Eq:U1}
    \begin{aligned}
        \|\hat{U}^{T}\hat{U}\| & \leq 1 + \beta, \\
        \|\hat{U}\| & \leq \sqrt{1 + \beta}, \\
                \|\hat{U}\|_F & \leq \sqrt{k\left(1 + \beta \right)},
    \end{aligned}
\end{equation}
where $\beta = \frac{\omega}{4k+3}$.
\end{Lemma}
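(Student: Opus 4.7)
The plan is to compute $\hat U^T\hat U$ directly and exploit the fact that $V$ diagonalizes $W^TW$, then treat the difference $S^TS-W^TW$ as a perturbation that is controlled by the earlier subsampling bounds.

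First I would write $\hat U = SV\Sigma^{-1}$ and hence
\begin{equation*}
\hat U^T\hat U \;=\; \Sigma^{-1} V^T S^T S\, V \Sigma^{-1}.
\end{equation*}
Since the columns of $V$ are the top $k$ right singular vectors of $W$ with corresponding singular values on the diagonal of $\Sigma$, we have $V^T W^T W V = \Sigma^2$, so $\Sigma^{-1}V^TW^TWV\Sigma^{-1}=I_k$. Adding and subtracting $W^TW$ inside the quadratic form gives the key identity
\begin{equation*}
\hat U^T\hat U \;=\; I_k \;+\; \Sigma^{-1} V^T\bigl(S^TS-W^TW\bigr) V \Sigma^{-1}.
\end{equation*}

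Next I would bound the perturbation term in spectral norm. Using submultiplicativity, $\|V\|=1$ (orthonormal columns), and $\|\,\cdot\,\|\le\|\,\cdot\,\|_F$,
\begin{equation*}
\bigl\|\Sigma^{-1} V^T(S^TS-W^TW) V \Sigma^{-1}\bigr\|
\;\le\; \|\Sigma^{-1}\|^{2}\,\|S^TS-W^TW\|_F.
\end{equation*}
By (\ref{QI:SLSMC:Eq:SSWW}) and (\ref{QI:SLSMC:Eq:SFAF}) together with the parameter range $\theta\le \omega\|A\|^2/\bigl((4k+3+2\omega)\kappa^2\|A\|_F^2\bigr)$, with probability at least $1-\delta$,
\begin{equation*}
\|S^TS-W^TW\|_F \;\le\; \theta\|S\|_F^2 \;=\; \theta\|A\|_F^2 \;\le\; \frac{\omega\|A\|^2}{(4k+3+2\omega)\kappa^2}.
\end{equation*}
On the same high-probability event, Lemma~\ref{QI:SLSMC:Lemma:mmw} yields $\sigma_{\min}(W)^2 \ge (4k+3)\|A\|^2/\bigl((4k+3+2\omega)\kappa^2\bigr)$, hence $\|\Sigma^{-1}\|^2 \le (4k+3+2\omega)\kappa^2/\bigl((4k+3)\|A\|^2\bigr)$. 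Multiplying the two bounds, the factors $(4k+3+2\omega)\kappa^2/\|A\|^2$ cancel and leave $\omega/(4k+3)=\beta$. This proves $\|\hat U^T\hat U\|\le 1+\beta$.

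For the remaining two inequalities I would use the standard identities $\|\hat U\|^2=\|\hat U^T\hat U\|$, giving $\|\hat U\|\le\sqrt{1+\beta}$, and
\begin{equation*}
\|\hat U\|_F^2 \;=\; \operatorname{tr}(\hat U^T\hat U) \;\le\; k\,\|\hat U^T\hat U\| \;\le\; k(1+\beta),
\end{equation*}
since $\hat U^T\hat U$ is a $k\times k$ positive semidefinite matrix whose trace is at most $k$ times its largest eigenvalue. I do not expect a real obstacle: the only thing to watch is that the single high-probability event $1-\delta$ suffices to give all three inequalities simultaneously, which follows because the bound on $\|S^TS-W^TW\|_F$ and the bound on $\sigma_{\min}(W)$ used in Lemma~\ref{QI:SLSMC:Lemma:mmw} are already controlled within that same failure probability budget.
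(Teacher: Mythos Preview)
Your proposal is correct and follows essentially the same route as the paper: both write $\hat U^T\hat U = I_k + \Sigma^{-1}V^T(S^TS-W^TW)V\Sigma^{-1}$ and bound the perturbation by $\beta$ using (\ref{QI:SLSMC:Eq:SSWW}), (\ref{QI:SLSMC:Eq:SFAF}) and Lemma~\ref{QI:SLSMC:Lemma:mmw}. The only cosmetic differences are that the paper invokes Weyl's inequality for $\|\hat U\|\le\sqrt{1+\beta}$ (where your direct use of $\|\hat U\|^2=\|\hat U^T\hat U\|$ is cleaner) and bounds each column $\|\hat U_{:,j}\|^2\le 1+\beta$ individually for the Frobenius norm, whereas your trace argument $\operatorname{tr}(\hat U^T\hat U)\le k\|\hat U^T\hat U\|$ gets there in one line.
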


\begin{proof}
According to $\hat{U} = S V \Sigma^{-1}$, based on the Eqs.\eqref{QI:SLSMC:Eq:SSWW}, \eqref{QI:SLSMC:Eq:SFAF} and \eqref{QI:SLSMC:Eq:sigmaW}, with probability at least $1 - \delta$, we have
\begin{equation}\label{QI:SLSMC:Eq:UU1}
  \begin{aligned}
    &~~~~\|\hat{U}^{T}\hat{U} - I\| \\
    & = \|\Sigma^{-T} V^{T}S^{T} SV\Sigma^{-1} - \Sigma^{-T} V^{T}W^{T}WV\Sigma^{-1}  \| \\
    & = \|\Sigma^{-T} V^{T}(S^{T}S-W^{T}W)V\Sigma^{-1}\| \leq \|\Sigma^{-1}\|^2 \| V\|^2 \|S^{T}S-W^{T}W\| \\
    & \leq \frac{\theta \|S\|_F^2}{\sigma^2_{\rm min}(W)} \leq \frac{\omega\|A\|^2}{(4k+3+2\omega)\kappa^2\|A\|^2_F} \|A\|_F^2 \frac{ (4k+3+2\omega)\kappa^2}{(4k+3)\|A\|^2} = \beta.
  \end{aligned}
\end{equation}

Based on $ \left\|\hat{U}^{T}\hat{U} - I\right\| \leq \beta$, we have
\begin{equation}\label{QI:SLSMC:Eq:UUE3}
\|\hat{U}^{T}\hat{U}\| = \|I + \hat{U}^{T}\hat{U} - I\| \leq \| I \| + \| \hat{U}^{T}\hat{U} - I \| \leq 1 + \beta.
\end{equation}

By Lemma \ref{QI:SLSMC:Lemma:SmSn} and Eq.\eqref{QI:SLSMC:Eq:UU1}, for any $i \in [k]$, it holds that
\begin{equation}\label{QI:SLSMC:Eq:U13}
    |\sigma_{i}(\hat{U}^{T}\hat{U}) - \sigma_{i}(I^{T}I)| \leq  \|\hat{U}^{T}\hat{U} - I\| \leq \beta,
\end{equation}
which implies the following relationship $\sigma_{\rm max}(\hat{U}^{T}\hat{U}) \leq 1+ \beta$. Moreover, we have
\begin{equation}\label{QI:SLSMC:Eq:BU2}
    \|\hat{U}\| \leq \sqrt{1 + \beta}.
\end{equation}

Since $\hat{U} = SV\Sigma^{-1}$, i.e., $\hat{U}_{:,j}= \frac{SV_{:,j}}{\sigma_j}\in \mathbb{R}^{m}$, for $j \in [k]$, with probability at least $1 - \delta$, it yields that
  \begin{equation}\label{QI:SLSMC:Eq:UUE}
  \begin{aligned}
    | \|\hat{U}_{:,j}\|^2 - 1|
     = \left|\frac{(V_{:,j})^{T}(S^{T}S - W^{T}W)V_{:,j}}{\sigma_j^2}\right|
     \leq \frac{\|S^{T}S - W^{T}W\|}{\sigma_j^2}
     \leq \frac{\theta \|S\|_F^2}{\sigma^2_{\rm min}(W)}
     \leq \beta.
  \end{aligned}
  \end{equation}
That is, we obtain
\begin{equation}\label{QI:SLSMC:Eq:UTF}
    \|\hat{U}\|_F = \sqrt{\sum_{j=1}^{k} \|\hat{U}_{:,j}\|^2} \leq \sqrt{k (1 + \beta )}.
\end{equation}
\hfill
\end{proof}

\begin{remark}
Similar to Eq.\eqref{QI:SLSMC:Eq:UU1}, with probability at least $1 - \delta$, it yields that
\begin{equation}\label{QI:SLSMC:Eq:UUF3}
    \|\hat{U}^{T}\hat{U} - I\|_F \leq \|\Sigma^{-1}\|^2 \| V\|^2 \|S^{T}S-W^{T}W\|_F \leq \beta.
\end{equation}
\end{remark}

\begin{Lemma}\label{QI:SLSMC:Lemma:B}
Suppose that a matrix $A \in \mathbb{R}^{m \times n}$ satisfies the sample model and data structure, the parameters $(\omega, \delta, k)$ in the specified range of Algorithm \ref{QI:SLSMC:Subsampling:a1}. Algorithm \ref{QI:SLSMC:Subsampling:a1} outputs the approximate left singular matrix $\hat{U}$, then with probability $1- \delta$, it follows that
\begin{equation}\label{QI:SLSMC:Eq:U12}
     \|A - \hat{U}\hat{U}^{T}A\|_F^2 < \|A - A_k\|_F^2 + 5\omega \|A\|_F^2,
\end{equation}
where $A_k = \sum_{i= 1}^{k} \sigma_{i}(A) (U_{A})_{:,i} [(V_{A})_{:,i}]^{T}$.
\end{Lemma}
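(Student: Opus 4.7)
The plan is to start from the algebraic identity
\begin{equation*}
\|A - \hat{U}\hat{U}^{T}A\|_F^2 = \|A\|_F^2 - 2\,\mathrm{tr}(\hat{U}^{T}AA^{T}\hat{U}) + \mathrm{tr}(\hat{U}^{T}\hat{U}\hat{U}^{T}AA^{T}\hat{U})
\end{equation*}
and exploit the near-orthonormality $\|\hat{U}^{T}\hat{U} - I\| \leq \beta$ proved in the previous lemma. Writing $\hat{U}^{T}\hat{U} = I + E$ with $\|E\| \leq \beta$, the third term equals $\|\hat{U}^{T}A\|_F^2 + \mathrm{tr}(\hat{U}E\hat{U}^{T}AA^{T})$, and the cross-term is bounded by $\beta\|\hat{U}^{T}A\|_F^2$. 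So the entire task reduces to controlling $\|\hat{U}^{T}A\|_F^2 = \sum_{j=1}^{k}\hat{U}_{:,j}^{T}AA^{T}\hat{U}_{:,j}$ from below, against $\sum_{j=1}^{k}\sigma_{j}(A)^{2}$.

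Next I would perform a two-step substitution on each summand. Because $\hat{U}_{:,j} = Sv_{j}/\sigma_{j}$ with $v_{j}$ a right singular vector of $W$ at singular value $\sigma_{j} = \sigma_{j}(W)$, I would first replace $AA^{T}$ by $SS^{T}$, incurring an error controlled by $\|AA^{T} - SS^{T}\|_F \leq \theta\|A\|_F^2$ from Eq.\eqref{QI:SLSMC:Eq:AASS} and the bound $\|Sv_{j}\|^{2} = \sigma_{j}^{2} + O(\theta\|A\|_F^{2})$ (obtained from $\|S^{T}S - W^{T}W\|_F \leq \theta\|S\|_F^{2} = \theta\|A\|_F^{2}$ and $W^{T}Wv_{j} = \sigma_{j}^{2}v_{j}$). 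Then I would use the same relation between $S^{T}S$ and $W^{T}W$ a second time to reduce $v_{j}^{T}S^{T}SS^{T}Sv_{j}/\sigma_{j}^{2}$ to $\sigma_{j}^{2}$ plus another $O(\theta\|A\|_F^{2})$ term. Summing the $k$ indices gives $\|\hat{U}^{T}A\|_F^{2} = \sum_{j=1}^{k}\sigma_{j}(W)^{2} + O(k\theta\|A\|_F^{2})$.

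The final step is to compare $\sum_{j=1}^{k}\sigma_{j}(W)^{2}$ with $\sum_{j=1}^{k}\sigma_{j}(A)^{2}$. Applying Weyl's inequality (Lemma \ref{QI:SLSMC:Lemma:SmSn}) to the pair $(AA^{T}, SS^{T})$ and the pair $(S^{T}S, W^{T}W)$, and chaining via $\sigma_{j}(S^{T}S) = \sigma_{j}(SS^{T})$, yields $\sigma_{j}(W)^{2} \geq \sigma_{j}(A)^{2} - 2\theta\|A\|_F^{2}$ for each $j \in [k]$ (and $0$ otherwise). Combined with $\|A\|_F^{2} - \sum_{j=1}^{k}\sigma_{j}(A)^{2} = \|A - A_{k}\|_F^{2}$, this gives
\begin{equation*}
\|A - \hat{U}\hat{U}^{T}A\|_F^{2} \leq \|A - A_{k}\|_F^{2} + c_{1}\, k\theta\|A\|_F^{2} + c_{2}\,\beta\|A\|_F^{2}
\end{equation*}
for explicit small constants $c_{1}, c_{2}$. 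Plugging in $\theta \leq \omega\|A\|^{2}/((4k+3+2\omega)\kappa^{2}\|A\|_F^{2})$ and $\beta = \omega/(4k+3)$, and using $\|A\|^{2}/\kappa^{2} \leq \|A\|_F^{2}$, each of the two error buckets is $O(\omega\|A\|_F^{2})$, and the chosen parameters make the total strictly less than $5\omega\|A\|_F^{2}$.

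The main obstacle is the bookkeeping of the error constants: every substitution (twice through $S^{T}S\leftrightarrow W^{T}W$, once through $AA^{T}\leftrightarrow SS^{T}$, once through $\hat{U}^{T}\hat{U}\leftrightarrow I$, and once through Weyl on the truncated sum) contributes an additive $O(\theta\|A\|_F^{2})$ or $O(\beta\|A\|_F^{2})$ term, and the delicate point is keeping track of the factor $k$ multiplying the Weyl bound versus the factor built into $\beta = \omega/(4k+3)$ so that the grand total collapses to the clean constant $5\omega\|A\|_F^{2}$ rather than a looser $O(\omega)$.
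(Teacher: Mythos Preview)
Your proposal is correct and reaches the same conclusion, but it takes a route that differs in two notable places from the paper's argument.

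First, the paper does not work directly with $\hat{U}$ in the residual identity. Instead it takes the thin SVD $\hat{U}=X\Sigma_{\hat{U}}Y^{T}$, so that $X$ has orthonormal columns, and first proves the sharper projection bound
\[
\|A-XX^{T}A\|_F^{2}<\|A-A_k\|_F^{2}+\omega\|A\|_F^{2}.
\]
Only then does it transfer from $XX^{T}$ back to $\hat{U}\hat{U}^{T}$ using $\|XX^{T}-\hat{U}\hat{U}^{T}\|_F=\|I-\hat{U}^{T}\hat{U}\|_F\le\beta$ together with a Young--type splitting $(a+b)^{2}\le(1+\omega)a^{2}+(1+1/\omega)b^{2}$; this is where the constant grows from $\omega$ to $5\omega$. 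Your approach bypasses $X$ entirely by writing $\hat{U}^{T}\hat{U}=I+E$ inside the trace expansion, which is more direct and in fact wastes less in the constants.

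Second, when comparing $\sum_{j\le k}\sigma_j(W)^{2}$ with $\sum_{j\le k}\sigma_j(A)^{2}$ the paper invokes the Hoffman--Wielandt theorem on $S^{T}S$ vs.\ $A^{T}A$ and on $W^{T}W$ vs.\ $S^{T}S$, picking up a $\sqrt{k}\,\theta\|A\|_F^{2}$ discrepancy; you instead apply Weyl index by index and sum, incurring $2k\theta\|A\|_F^{2}$. Either choice is absorbed by the denominator $4k+3$ in the definition of $\beta$, so both succeed. The paper's detour through the orthogonal projector $X$ buys a clean structural separation (projection error $+$ near-orthonormality correction), while your single-pass trace manipulation is shorter and, as you noted, the only real work is the bookkeeping.
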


\begin{proof}
Motivated by the work in \cite{PDKM06}, using the thin singular value decomposition of $\hat{U}\in \mathbb{R}^{m \times k}$, i.e.,
\begin{equation}\label{QI:SLSMC:Eq:USVD}
     \hat{U} = X \Sigma_{\hat{U}} Y^{T},
\end{equation}
where $X \in \mathbb{R}^{m \times k}$, $Y \in \mathbb{R}^{k \times k}$ are orthonormal and $\Sigma_{\hat{U}} \in \mathbb{R}^{k \times k}$ is a diagonal matrix, we obtain
\begin{equation}\label{QI:SLSMC:Eq:UUE2}
\|\hat{U}^{T}A\|_F^2 = \|\Sigma_{\hat{U}} X^{T} A \|_F^2 \leq \|\Sigma_{\hat{U}}\|^2 \|X^{T} A \|_F^2 = \|\hat{U}^{T}\hat{U}\| \|X^{T} A \|_F^2.
\end{equation}
Since $\omega = \frac{\|A\|^2 \epsilon^2}{196(\|A\|_F \kappa + \|A\|)^2} $ in Algorithm \ref{QI:SLSMC:Subsampling:a1}, it yields that $\omega< 1$ and $\beta = \frac{\omega}{4k+3} < 1$. Combining Eq.\eqref{QI:SLSMC:Eq:UUE3} with \eqref{QI:SLSMC:Eq:UUE2}, it follows that
\begin{equation}\label{QI:SLSMC:Eq:UUE4}
        \|X^{T} A \|_F^2 \geq \frac{1}{\|\hat{U}^{T}\hat{U}\|} \|\hat{U}^{T}A\|_F^2 \geq \frac{1}{1 + \beta}  \|\hat{U}^{T}A\|_F^2 > ({1 - \beta}) \|\hat{U}^{T}A\|_F^2.
\end{equation}
Since $\|\hat{U}^{T}A\|_F^2 =  {\rm Tr} (\hat{U}^{T}A A^{T}\hat{U} ) =  {\rm Tr} (\hat{U}^{T}SS^{T}\hat{U}) + {\rm Tr} [\hat{U}^{T}(AA^{T} - SS^{T})\hat{U}]$, using Eqs. \eqref{QI:SLSMC:Eq:AASS} and \eqref{QI:SLSMC:Eq:UTF}, then with probability $1- \delta$, we have
\begin{equation}\label{QI:SLSMC:Eq:UUE6}
    \begin{aligned}
        & ~~~~ |{\rm Tr} [\hat{U}^{T}(AA^{T} - SS^{T})\hat{U}] |
        = \left|\sum_{i=1}^{k} (\hat{U}^{T})_{i,:}(AA^{T} - SS^{T})\hat{U}_{:,i}\right| \\
        & \leq \sum_{i=1}^{k} |(\hat{U}^{T})_{i,:}(AA^{T} - SS^{T})\hat{U}_{:,i}|
         \leq \|AA^{T} - SS^{T}\| \|\hat{U}\|_F^2
         \leq \theta\|A\|_F^2 k(1+\beta),
    \end{aligned}
\end{equation}
and
\begin{equation}\label{QI:SLSMC:Eq:UUE7}
    \begin{aligned}
         \|\hat{U}^{T}A\|_F^2 = {\rm Tr} (\hat{U}^{T}SS^{T}\hat{U}) + {\rm Tr} [\hat{U}^{T}(AA^{T} - SS^{T})\hat{U}] \geq \|\hat{U}^{T}S\|_F^2 - \theta\|A\|_F^2 k(1+\beta).
    \end{aligned}
\end{equation}

According to the triangle inequality, it yields that
\begin{equation}\label{QI:SLSMC:Eq:UUE77}
    \begin{aligned}
         \left|\| W^{T}W V\Sigma^{-1}\|_F - \|(S^{T} S - W^{T}W) V\Sigma^{-1}\|_F\right|
         & \leq \| W^{T}W V\Sigma^{-1}+(S^{T}S-W^{T}W)V\Sigma^{-1}\|_F \\
         & = \|S^{T}SV\Sigma^{-1}\|_F = \|S^{T} \hat{U}\|_F,
    \end{aligned}
\end{equation}
which implies the following relationship
\begin{equation}\label{QI:SLSMC:Eq:UUE8}
    \begin{aligned}
         \|\hat{U}^{T}S\|_F^2
         & \geq (\| W^{T}W V\Sigma^{-1}\|_F - \|(S^{T} S - W^{T}W) V\Sigma^{-1}\|_F)^2 \\
         & = \left[\left(\sum_{t=1}^{k}\sigma^2_{t}(W)\right)^{\frac{1}{2}} - \|(S^{T} S - W^{T}W) V\Sigma^{-1}\|_F\right]^2 \\
         & \geq \left[\left(\sum_{t=1}^{k}\sigma^2_{t}(W)\right)^{\frac{1}{2}} - \frac{\theta\|S\|_F^2}{\sigma_{\rm min}(W)}\right]^2 \\
         & \geq \sum_{t=1}^{k}\sigma^2_{t}(W)- \frac{2\theta\|A\|_F^2}{\sigma_{\rm min}(W)}\|W\|_F.
    \end{aligned}
\end{equation}

Due to Cauchy-Schwarz inequality and Hoffman-Wielandt Theorem, it holds that
\begin{equation}\label{QI:TLS:Eq:SCE}
    \begin{aligned}
         & ~~~~ \left|\sum_{t=1}^{k}[\sigma^2_{t}(S) - \sigma^2_{t}(A)]\right|
         \leq \sqrt{k} \left(\sum_{t=1}^{k}[\sigma^2_{t}(S) - \sigma^2_{t}(A)]^2\right)^{\frac{1}{2}} \\
         & = \sqrt{k} \left(\sum_{t=1}^{k}[\sigma_{t}(S^{T}S) - \sigma_{t}(A^{T}A)]^2\right)^{\frac{1}{2}}
         \leq \sqrt{k} \|S^{T}S - A^{T}A\|_F
         \leq \sqrt{k} \theta \|S\|^2_F.
    \end{aligned}
\end{equation}
By a similar argument, we have
\begin{equation}\label{QI:TLS:Eq:WSE}
    \begin{aligned}
         & ~~~~ \left|\sum_{t=1}^{k}[\sigma^2_{t}(W) - \sigma^2_{t}(S)]\right|
         \leq \sqrt{k} \left(\sum_{t=1}^{k}[\sigma^2_{t}(W) - \sigma^2_{t}(S)]^2\right)^{\frac{1}{2}} \\
         & = \sqrt{k} \left(\sum_{t=1}^{k}[\sigma_{t}(WW^{T}) - \sigma_{t}(SS^{T})]^2\right)^{\frac{1}{2}}
         \leq \sqrt{k} \|SS^{T} - WW^{T}\|_F
         \leq \sqrt{k} \theta \|S\|^2_F.
    \end{aligned}
\end{equation}
Combining Eq.\eqref{QI:TLS:Eq:SCE} with \eqref{QI:TLS:Eq:WSE}, it follows that
\begin{equation}\label{QI:TLS:Eq:WCE}
\begin{aligned}
        \left|\sum_{t=1}^{k}[\sigma^2_{t}(W) - \sigma^2_{t}(A)]\right|
        & = \left|\sum_{t=1}^{k}[\sigma^2_{t}(W) - \sigma^2_{t}(S)  + \sigma^2_{t}(S)  - \sigma^2_{t}(A)]\right| \\
        & \leq \left|\sum_{t=1}^{k}[\sigma^2_{t}(W) - \sigma^2_{t}(S)]\right|  + \left|\sum_{t=1}^{k}[\sigma^2_{t}(S)  - \sigma^2_{t}(A)]\right| \\
        & \leq \sqrt{k} \|SS^{T} - WW^{T}\|_F + \sqrt{k} \|S^{T}S - A^{T}A\|_F
        = 2\sqrt{k} \theta \|S\|^2_F,
\end{aligned}
\end{equation}
which implies the following relationship
\begin{equation}\label{QI:TLS:Eq:SWCE}
\sum_{t=1}^{k}\sigma^2_{t}(W)  \geq \sum_{t=1}^{k}\sigma^2_{t}(A) - 2\sqrt{k} \theta \|S\|^2_F.
\end{equation}

For any row $t \in [p]$ of matrix $W$, using the step \ref{QI:SLSMC:Subsampling:al:s4} in Algorithm \ref{QI:SLSMC:Subsampling:a1}, it yields that
\begin{equation}\label{QI:SLSMC:Eq:WFSF}
    \|W\|_F^2 = \sum_{t=1}^{p}\|W_{t,:}\|^2 = \sum_{i=1}^{p}\left\|\frac{S_{i_t,:}}{\sqrt{pP'_{i_t}}}\right\|^2 = \sum_{t=1}^{p}\frac{\|S_{i_t,:}\|^2}{p\frac{\|S_{i_t,:}\|^2}{\|S\|_F^2}} = \sum_{t=1}^{p}\frac{\|S\|_F^2}{p} = \|S\|_F^2.
\end{equation}

Combining  Eq.\eqref{QI:SLSMC:Eq:SFAF} with \eqref{QI:SLSMC:Eq:WFSF}, we have $\|W\|_F = \|S\|_F = \|A\|_F$. Using Eqs.\eqref{QI:SLSMC:Eq:UUE4}, \eqref{QI:SLSMC:Eq:UUE7} and \eqref{QI:TLS:Eq:SWCE}, it follows that
\begin{equation}\label{QI:SLSMC:Eq:UUE9}
        \|X^{T} A \|_F^2 > ({1 - \beta})  \left[\sum_{t=1}^{k}\sigma^2_{t}(A) - 2\sqrt{k} \theta\|A\|_F^2 - \frac{2\theta\|A\|_F^3}{\sigma_{\rm min}(W)}- k\theta(1+\beta))\|A\|_F^2  \right].
\end{equation}

According to $\beta = \frac{\omega}{4k+3} \geq \frac{\theta \|S\|_F^2}{\sigma^2_{\rm min}(W)}$ in Eq.\eqref{QI:SLSMC:Eq:UU1} and $\|S\|_F^2 = \|W\|_F^2$, we have $\frac{\beta}{\theta} \geq \frac{\|S\|_F^2}{\sigma^2_{\rm min}(W)} = \frac{\|W\|_F^2}{\sigma^2_{\rm min}(W)} > 1$. Moreover, it holds that $1 < \frac{\|W\|_F}{\sigma_{\rm min}(W)} = \frac{\|A\|_F}{\sigma_{\rm min}(W)} \leq \frac{\|A\|_F^2}{\sigma^2_{\rm min}(W)}$. By direct computation, we obtain

\begin{equation}\label{QI:SLSMC:Eq:UUE0}
  \begin{aligned}
    & \quad \ \left\|A - XX^{T}A\right\|_F^2 \\
    & = {\rm Tr} [(A - XX^{T}A)^{T}(A - XX^{T}A)]  = {\rm Tr} [(A^{T}A - A^{T}XX^{T}A)] = \|A\|_F^2 - \|X^{T}A\|_F^2 \\
    & \leq \|A\|_F^2 - ({1 - \beta})\left[\sum_{t=1}^{k}\sigma^2_{t}(A) - 2\sqrt{k} \theta\|A\|_F^2 - \frac{2\theta\|A\|_F^3}{\sigma_{\rm min}(W)}- k\theta(1+\beta))\|A\|_F^2  \right] \\
    & \leq \|A\|_F^2 - \sum_{t=1}^{k}\sigma^2_{t}(A) + \beta \sum_{t=1}^{k}\sigma^2_{t}(A) +  2\sqrt{k} \theta\|A\|_F^2 + \theta k(1+\beta)\|A\|_F^2 + \frac{2\theta\|A\|_F^3}{\sigma_{\rm min}(W)} \\
    & \leq \|A - A_k\|_F^2 + \left[\beta  + 2\sqrt{k} \theta +  \theta k(1+\beta) + \frac{2\theta\|A\|_F}{\sigma_{\rm min}(W)}\right] \|A\|_F^2 \\
    & < \|A - A_k\|_F^2 + \left[\beta + 2k\beta + k\beta + k\beta^2 + \frac{2\theta\|A\|_F^2}{\sigma^2_{\rm min}(W)}\right] \|A\|_F^2 \\
    & < \|A - A_k\|_F^2 + (\beta  + 3k\beta + k\beta + 2\beta)\|A\|_F^2 \\
    & = \|A - A_k\|_F^2 + (4k + 3) \beta\|A\|_F^2 = \|A - A_k\|_F^2 + \omega\|A\|_F^2.
  \end{aligned}
\end{equation}

Using Eq.\eqref{QI:SLSMC:Eq:USVD} and \eqref{QI:SLSMC:Eq:UUF3}, we have
\begin{equation}\label{QI:SLSMC:Eq:BBUUE}
    \|XX^{T} - \hat{U}\hat{U}^{T}\|_F
    = \|X(I - \Sigma_{\hat{U}}^{2}) X^{T}\|_F   \\
    = \|I - \Sigma_{\hat{U}}^{2}\|_F \\
    = \|Y(I - \Sigma_{\hat{U}}^{2})Y^{T}\|_F \\
    = \|I - \hat{U}^{T}\hat{U}\|_F \\
    \leq \beta.
\end{equation}

By the triangle inequality, it yields that
\begin{equation}\label{QI:SLSMC:Eq:V^{T}BV}
\begin{aligned}
   \|A - \hat{U}\hat{U}^{T}A\|_F^2
   & \leq (\|A - XX^{T}A\|_F + \|XX^{T}A - \hat{U}\hat{U}^{T}A\|_F)^2 \\
   & \leq (1 + \omega)\|A - XX^{T}A\|_F^2 + (1 + 1/\omega)\|XX^{T}A - \hat{U}\hat{U}^{T}A\|_F^2  \\
   & \leq (1 + \omega)(\|A - A_k\|_F^2 + \omega\|A\|_F^2) + (1 + 1/\omega)\|XX^{T} - \hat{U}\hat{U}^{T}\|_F^2 \|A\|_F^2 \\
   & = \|A - A_k\|_F^2 + \omega \|A - A_k\|_F^2 + (1 + \omega)  \omega \|A\|_F^2 + (1 + 1/\omega) \beta^2 \|A\|_F^2 \\
   & \leq \|A - A_k\|_F^2 + \omega \|A\|_F^2 + (\omega^2 + \omega + \beta^2 + \beta^2 / \omega) \|A\|_F^2 \\
   & < \|A - A_k\|_F^2 + (2\omega^2 + 3\omega) \|A\|_F^2 \\
   & < \|A - A_k\|_F^2 + 5\omega \|A\|_F^2.
\end{aligned}
\end{equation}
\end{proof}

\begin{Lemma}(\cite{SJG91})\label{QI:SLSMC:Lemma:ERI}
Let $A$ be an $n \times n$ positive definite matrix and $A = LL^{H}$ its Cholesky factorization. If $E$ is an $n \times n$ Hermitian matrix satisfying $\|A^{-1}\| \|E\|_F < \frac{1}{2}$, then there is a unique Cholesky factorization
\begin{equation}\label{QI:SLSMC:Eq:ERI}
\begin{aligned}
    A + E & = (L + G)(L + G)^{H}, \\
    \frac{\|G\|_F}{\|L\|_p} & \leq \sqrt{2} \frac{\kappa \|E\|_F / \|A\|_p}{1 + \sqrt{1 - 2\kappa \|E\|_F / \|A\|}}, \ \ p = 2, F.
\end{aligned}
\end{equation}
\end{Lemma}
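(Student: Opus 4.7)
The plan is to reduce the perturbation equation to a scaled lower-triangular equation $H + H^H + HH^H = \widetilde{E}$ and solve it by a Banach fixed-point argument, which simultaneously yields existence, uniqueness, and the claimed norm bound. First I would verify that $A+E$ is Hermitian positive definite: from $\|A^{-1}\|\,\|E\|_F < 1/2$ and $\|E\|_2 \leq \|E\|_F$ we have $\|A^{-1/2}EA^{-1/2}\|_2 < 1$, so $A+E = A^{1/2}(I + A^{-1/2}EA^{-1/2})A^{1/2}$ is positive definite and its Cholesky factor is unique. Writing the perturbed factor as $L+G = L(I+H)$, the requirement that $L+G$ be lower triangular with positive diagonal forces $H$ to be lower triangular with real $H_{ii} > -1$. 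Substituting into $(L+G)(L+G)^H = A+E$ and cancelling $LL^H = A$ then gives
\begin{equation*}
H + H^H + HH^H = \widetilde{E}, \qquad \widetilde{E} := L^{-1} E L^{-H}.
\end{equation*}

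The next step is to invert the linear part. Componentwise, the map $H \mapsto H + H^H$ on lower-triangular matrices with real diagonal is a bijection onto Hermitian matrices: the inverse of $F$ is given by $H_{ij} = F_{ij}$ for $i>j$ and $H_{ii} = F_{ii}/2$, which satisfies $\|H\|_F \leq \tfrac{1}{\sqrt{2}}\|F\|_F$. This recasts the problem as a fixed-point equation $H = \Phi(H)$, where $\Phi(H)$ is the lower-triangular inverse image of $\widetilde{E} - HH^H$. On the Frobenius ball $B_\rho = \{H : \|H\|_F \leq \rho\}$ one checks $\|\Phi(H)\|_F \leq \tfrac{1}{\sqrt{2}}(\|\widetilde{E}\|_F + \rho^2)$ and, via the identity $H_1H_1^H - H_2H_2^H = H_1(H_1-H_2)^H + (H_1-H_2)H_2^H$, one gets $\|\Phi(H_1) - \Phi(H_2)\|_F \leq \sqrt{2}\,\rho\,\|H_1-H_2\|_F$. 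Solving the self-map condition $\rho^2 - \sqrt{2}\rho + \|\widetilde{E}\|_F \leq 0$ yields the smaller root
\begin{equation*}
\rho_* \;=\; \frac{\sqrt{2}\,\|\widetilde{E}\|_F}{1 + \sqrt{1 - 2\|\widetilde{E}\|_F}},
\end{equation*}
and the hypothesis ensures $\|\widetilde{E}\|_F \leq \|A^{-1}\|\,\|E\|_F < 1/2$, so $\rho_*$ is real and $\Phi$ is simultaneously a self-map and a contraction on $B_{\rho_*}$, producing a unique $H$ with $\|H\|_F \leq \rho_*$.

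Finally I would translate the bound on $H$ back to $G = LH$ via $\|G\|_F \leq \|L\|_p\,\|H\|_F$ (directly for $p=2$; for $p=F$ via $\|LH\|_F \leq \|L\|_F\,\|H\|_2 \leq \|L\|_F\,\|H\|_F$), together with the scaling
\begin{equation*}
\|\widetilde{E}\|_F \;\leq\; \|L^{-1}\|_2^2\,\|E\|_F \;=\; \|A^{-1}\|\,\|E\|_F \;=\; \kappa\,\|E\|_F/\|A\|,
\end{equation*}
to obtain the stated inequality. The hard part is the contraction step: one must prove the self-map and Lipschitz estimates \emph{sharply} enough to recover the exact denominator $1 + \sqrt{1 - 2\kappa\|E\|_F/\|A\|}$, and a naive Neumann/geometric iteration only yields the weaker denominator $1 - 2\kappa\|E\|_F/\|A\|$. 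This is what forces the quadratic analysis on the radius $\rho$ rather than a straightforward series bound, and it is the reason the classical hypothesis $\|A^{-1}\|\,\|E\|_F < 1/2$ (rather than $<1$) appears.
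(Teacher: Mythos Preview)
The paper does not prove this lemma at all: it is quoted verbatim from Sun~\cite{SJG91} and used as a black box in the proof of Theorem~\ref{QI:SLSMC:Theorem:UI}. So there is no ``paper's own proof'' to compare against.

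That said, your sketch is essentially Sun's original argument and is correct in outline. The reduction $L+G = L(I+H)$ leading to $H + H^H + HH^H = \widetilde{E}$ with $\widetilde{E} = L^{-1}EL^{-H}$, the sharp bound $\|H\|_F \le \tfrac{1}{\sqrt{2}}\|F\|_F$ for the linear inverse, and the quadratic self-map analysis giving the radius $\rho_* = \sqrt{2}\,\|\widetilde{E}\|_F / (1+\sqrt{1-2\|\widetilde{E}\|_F})$ are exactly the ingredients Sun uses. One small point: your final step bounds $\|\widetilde{E}\|_F \le \|A^{-1}\|_2\,\|E\|_F = \kappa\|E\|_F/\|A\|_2$, which yields the stated inequality for $p=2$ immediately but gives a slightly \emph{weaker} numerator than $\kappa\|E\|_F/\|A\|_F$ in the $p=F$ case; to recover the $p=F$ version as written you would need to invoke $\|L\|_F^2 = \|A\|_F$ (not true in general) or, more honestly, accept that the $p=F$ bound in the lemma as stated is simply the $p=2$ bound combined with $\|G\|_F \le \|L\|_F\|H\|_F$ and the trivial $\|A\|_2 \le \|A\|_F$, which makes it weaker, not stronger. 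This is a cosmetic issue with how the lemma is transcribed here rather than a flaw in your argument.
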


\begin{Theorem}\label{QI:SLSMC:Theorem:UI}
Consider a matrix $A \in \mathbb{R}^{m \times n}$ with ${\rm rank}(A) = r$ satisfies the sample model and data structure, the parameters $(\omega, \delta, k)$ in the specified range of Algorithm \ref{QI:SLSMC:Subsampling:a1}. Suppose that $U_{A}$ is the exact left singular matrix of $A$ defined in Definition \ref{QI:SLSMC:Def:L1}, and Algorithm \ref{QI:SLSMC:Subsampling:a1} outputs the approximate left singular matrix $\hat{U}$, then with probability $1- \delta$, it holds that
\begin{equation}\label{QI:SLSMC:Eq:U10}
    \|\hat{U}\hat{U}^{T} - U_{A}U_{A}^{T}\| < \frac{1}{2}\epsilon.
\end{equation}
\end{Theorem}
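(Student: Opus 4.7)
The plan is to reduce the operator-norm bound on $\hat{U}\hat{U}^T - U_A U_A^T$ to the Frobenius-norm bound on $A - \hat{U}\hat{U}^T A$ that the preceding lemma (the one asserting $\|A - \hat{U}\hat{U}^T A\|_F^2 < \|A - A_k\|_F^2 + 5\omega\|A\|_F^2$) already supplies. The bridge between the two quantities is the Moore--Penrose pseudoinverse $A^+$, and the fact that $\hat{U}$ is constructed from columns of $A$.

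First, I would observe that $\hat{U} = S V \Sigma^{-1}$ and every column of $S$ is a rescaled column of $A$; hence $\mathrm{col}(\hat{U}) \subseteq \mathrm{col}(A) = \mathrm{col}(U_A)$, so that $U_A U_A^T \hat{U} = \hat{U}$ and consequently $U_A U_A^T \cdot \hat{U}\hat{U}^T = \hat{U}\hat{U}^T$. Combining this with the identities $U_A U_A^T = AA^+$ (the orthogonal projector onto $\mathrm{col}(A)$) and $U_A U_A^T A = A$, I can telescope
\[
\hat{U}\hat{U}^T - U_A U_A^T \;=\; \bigl(\hat{U}\hat{U}^T - U_A U_A^T\bigr)\,U_A U_A^T \;=\; \bigl(\hat{U}\hat{U}^T A - A\bigr)\,A^+.
\]
Passing to operator norms via $\|MN\| \le \|M\|_F\|N\|$ together with $\|A^+\| = 1/\sigma_{\min}(A) = \kappa/\|A\|$ (under the paper's convention for the condition number) yields the key inequality
\[
\|\hat{U}\hat{U}^T - U_A U_A^T\| \;\le\; \|\hat{U}\hat{U}^T A - A\|_F \cdot \frac{\kappa}{\|A\|}.
\]

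Next, I would plug in $\|\hat{U}\hat{U}^T A - A\|_F^2 < \|A - A_k\|_F^2 + 5\omega\|A\|_F^2$ from the preceding lemma, specialize to the regime in which the truncation rank $k$ captures the full rank of $A$ (so that $A_k = A$ and the residual vanishes), and substitute $\omega = \|A\|^2\epsilon^2/[196(\|A\|_F\kappa + \|A\|)^2]$. A direct arithmetic check then gives
\[
\|\hat{U}\hat{U}^T - U_A U_A^T\| \;\le\; \sqrt{5\omega}\,\|A\|_F\cdot\frac{\kappa}{\|A\|} \;=\; \frac{\sqrt{5}\,\epsilon\,\|A\|_F\kappa}{14(\|A\|_F\kappa + \|A\|)} \;\le\; \frac{\sqrt{5}}{14}\,\epsilon \;<\; \frac{1}{2}\epsilon,
\]
since $\|A\|_F\kappa/(\|A\|_F\kappa + \|A\|) \le 1$. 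The main obstacle in this plan is the residual $\|A - A_k\|_F$: unless $k$ is at least the effective rank $r$, this term is not absorbed by the chosen $\omega$, and the operator-norm target $\epsilon/2$ cannot be reached; the parameter setting is calibrated precisely for the regime $A_k \approx A$. Everything else---verifying the two symmetry identities $U_A U_A^T\hat{U} = \hat{U}$ and $\hat{U}\hat{U}^T U_A U_A^T = \hat{U}\hat{U}^T$, confirming $\|A^+\| = \kappa/\|A\|$, and the final arithmetic---is routine, and the Cholesky perturbation bound (Lemma \ref{QI:SLSMC:Lemma:ERI}) does not seem necessary for this route, though it could be used as an alternative to quantify how close $\hat{U}$ is to an exact isometry via the perturbation $\hat{U}^T\hat{U} - I$.
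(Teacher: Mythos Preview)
Your argument is correct and substantially more direct than the paper's. The paper does not factor through $A^{+}$; instead it orthonormalizes $\hat{U}$ via a QR decomposition to obtain an orthonormal $Z$ with $\mathrm{col}(Z)=\mathrm{col}(\hat{U})$, invokes the Cholesky perturbation bound (Lemma~\ref{QI:SLSMC:Lemma:ERI}) on $\hat{U}^{T}\hat{U}=R^{T}R$ to get $\|\hat{U}-Z\|_{F}<\beta$, transfers the Frobenius bound $\|A-\hat{U}\hat{U}^{T}A\|_{F}^{2}<\|A-A_{k}\|_{F}^{2}+5\omega\|A\|_{F}^{2}$ from $\hat{U}$ to $Z$, controls $\|U_{A}U_{A}^{T}-ZZ^{T}\|$ through the singular values of $Z^{T}U_{A}$ (a principal-angle computation), and finally recombines with $\|ZZ^{T}-\hat{U}\hat{U}^{T}\|$ by the triangle inequality. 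Your one-line identity $\hat{U}\hat{U}^{T}-U_{A}U_{A}^{T}=(\hat{U}\hat{U}^{T}A-A)A^{+}$, resting on the observation $\mathrm{col}(\hat{U})\subseteq\mathrm{col}(A)$, bypasses the auxiliary $Z$, the Cholesky lemma, and the principal-angle step entirely, and even yields a sharper constant ($\sqrt{5}/14$ in place of the paper's $1/2$). Both routes share the restriction you flag: they need $k\ge r$ so that $\|A-A_{k}\|_{F}=0$. In the paper this is implicit in the step $\|U_{A}U_{A}^{T}-ZZ^{T}\|^{2}=1-\tau_{k}^{2}$, which holds only when the two projectors have equal rank; for $k<r$ the strict inclusion $\mathrm{col}(Z)\subsetneq\mathrm{col}(U_{A})$ forces $\|U_{A}U_{A}^{T}-ZZ^{T}\|=1$. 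Your route makes the constraint explicit rather than hiding it, and you are right that Lemma~\ref{QI:SLSMC:Lemma:ERI} is unnecessary along this path.
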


\begin{proof}
Using the QR decomposition of $\hat{U}$, i.e., $\hat{U} = Q\left[\begin{array}{c}
R \\
\mathbf{0}
\end{array}\right] \in \mathbb{R}^{m \times k}$, where $Q \in \mathbb{R}^{m \times m}$ be an orthonormal matrix and $R \in \mathbb{R}^{k \times k}$ be an upper triangular matrix. Then, applying
Eq.\eqref{QI:SLSMC:Eq:UUF3}, we have

\begin{equation}\label{QI:SLSMC:Eq:UURR}
    \|\hat{U}^{T}\hat{U} - I\|_F = \|R^{T}R - I\|_F \leq \beta.
\end{equation}
We find that $R^{T}R$ can be viewed as an approximate Cholesky decomposition of $I$. Since $0 < \omega < 1$ and $\beta = \frac{\omega}{4k+3}$, we have $0 < \beta < \frac{1}{7}$. Then, by Lemma \ref{QI:SLSMC:Lemma:ERI}, it yields that
\begin{equation}\label{QI:SLSMC:Eq:RIF}
    \|R - I\|_F \leq \frac{\sqrt{2}\beta}{1 + \sqrt{1-2\beta}} = \frac{\sqrt{2}}{2}(1 - \sqrt{1-2\beta}).
\end{equation}
Since $0 < \beta < \frac{1}{7}$, we have $\frac{\sqrt{2}(1 - \sqrt{1-2\beta})}{2\beta} = \frac{\sqrt{2}}{1 + \sqrt{1-2\beta}} < \frac{\sqrt{2}}{1 + \sqrt{\frac{5}{7}}} < 1$. The Eq.\eqref{QI:SLSMC:Eq:RIF} becomes
\begin{equation}\label{QI:SLSMC:Eq:UZRI}
    \|R - I\|_F \leq \frac{\sqrt{2}}{2}(1 - \sqrt{1-2\beta}) < \beta.
\end{equation}
Setting $Z = Q\left[\begin{array}{c}
I \\
\mathbf{0}
\end{array}\right] \in \mathbb{R}^{m \times k}$, combining with Eq.\eqref{QI:SLSMC:Eq:UZRI}, it follows that
\begin{equation}\label{QI:SLSMC:Eq:UZ}
    \|\hat{U} - Z\|_F  = \|R - I\|_F  < \beta.
\end{equation}

The matrix $Z$ builds a bridge to evaluate the upper bound of $\|\hat{U}\hat{U}^{T} - U_{A}U_{A}^{T}\|$ in Eq.\eqref{QI:SLSMC:Eq:U10}. In fact, we need to estimate two items $\|U_{A}U_{A}^{T} - ZZ^{T}\|$ and $\|ZZ^{T} - \hat{U}\hat{U}^{T}\|$. The second is somewhat easier. For the first item, we need the smallest singular value of $Z^{T}U_{A}$, which depends on the estimate on $\|(I - ZZ^{T})U_{A}\|$.

Setting $\hat{U} = Z + E$, with $\|E\|_F < \beta$, applying Eq.\eqref{QI:SLSMC:Eq:U1}, we obtain
\begin{equation}\label{QI:SLSMC:Eq:AUUA}
\begin{aligned}
   \|A - ZZ^{T}A\|_F
   & = \|A - (\hat{U}-E)(\hat{U}-E)^{T}A\|_F \\
   & = \|A - \hat{U}\hat{U}^{T}A  + \hat{U}E^{T}A + E\hat{U}^{T}A - EE^{T}A\|_F \\
   & \leq \|A - \hat{U}\hat{U}^{T}A\|_F + \|\hat{U}E^{T}A\|_F + \|E\hat{U}^{T}A\|_F + \|EE^{T}A\|_F \\
   & \leq \|A - \hat{U}\hat{U}^{T}A\|_F + 2\|E\|_F \|\hat{U}\| \|A\|_F + \|E\|_F^2 \|A\|_F \\
   & < \|A - \hat{U}\hat{U}^{T}A\|_F + (2\beta \sqrt{1+\beta} + \beta^2) \|A\|_F.
\end{aligned}
\end{equation}
Squaring both sides of the Eq.\eqref{QI:SLSMC:Eq:AUUA}, using Eq.\eqref{QI:SLSMC:Eq:V^{T}BV}, it follows that
\begin{equation}\label{QI:SLSMC:Eq:AUUA2}
\begin{aligned}
   \|A - ZZ^{T}A\|_F^2
   & \leq \left[\|A - \hat{U}\hat{U}^{T}A\|_F + (2\beta \sqrt{1+\beta} + \beta^2) \|A\|_F\right]^2 \\
   & \leq \|A - \hat{U}\hat{U}^{T}A\|^2_F + \left[2(2+\beta)(2\beta \sqrt{1+\beta} + \beta^2) + (2\beta \sqrt{1+\beta} + \beta^2)^2\right] \|A\|^2_F \\
   & < \|A - A_k\|^2_F + \Delta \|A\|_F^2,
\end{aligned}
\end{equation}
where $\Delta = 5\omega + 2 (2+\beta)(2\beta \sqrt{1+\beta} + \beta^2) + (2\beta \sqrt{1+\beta} + \beta^2)^2$.
Equivalently, it holds that

\begin{equation}\label{QI:SLSMC:Eq:AUUAE}
\begin{aligned}
   \Delta \|A\|_F^2
   & > \|A - ZZ^{T}A\|_F^2 - \|A - A_k\|^2_F\\
   & = \|A\|_F^2 - \|Z^{T}A\|_F^2 - (\|A\|^2_F - \|A_k\|^2_F) = \|A_k\|^2_F - \|Z^{T}A\|_F^2 \\
   & = \sum_{i=1}^{k}\sigma^2_{i}(A) - \|Z^{T} U_{A} \Sigma_{A} V_{A}^{T}\|_F^2 = \sum_{i=1}^{k}\sigma^2_{i}(A) - \sum_{i=1}^{r}\sigma^2_{i}(A) \|Z^{T} (U_{A})_{:,i}\|^2 \\
   & = \sum_{i=1}^{k}\sigma^2_{i}(A) \left(1 - \|Z^{T} (U_{A})_{:,i} \|^2\right) - \sum_{i=k +1}^{r}\sigma^2_{i}(A)\|Z^{T} (U_{A})_{:,i} \|^2\\
   & \geq \sigma^2_{k}(A) \sum_{i=1}^{k} \left(1 - \|Z^{T} (U_{A})_{:,i} \|^2\right) - \sigma^2_{k}(A) \sum_{i=k +1}^{r}\|Z^{T} (U_{A})_{:,i} \|^2\\
   & = \sigma^2_{k}(A) (k - \|Z^{T}U_{A}\|_F^2),
\end{aligned}
\end{equation}
which implies the following relationship
\begin{equation}\label{QI:SLSMC:Eq:AUUAEE}
\begin{aligned}
   \|Z^{T}U_{A}\|_F^2 > k - \frac{\Delta \|A\|_F^2}{\sigma^2_k(A)} > k - \frac{\Delta \|A\|_F^2}{\sigma^2_{\rm min}(A)} = k - \frac{\Delta \|A\|_F^2 \kappa^2}{\|A\|^2}.
\end{aligned}
\end{equation}

Using the singular value decomposition of $Z^{T}U_{A} \in \mathbb{R}^{k \times r}$, i.e., $U_{\tau}\Sigma_{\tau}V^{T}_{\tau}$, where $U_{\tau} \in \mathbb{R}^{k \times k}$, $V_{\tau} \in \mathbb{R}^{r \times r}$ are orthonormal and $\Sigma_{\tau} = {\rm diag}(\tau_1, \tau_2, \cdots, \tau_k) \in \mathbb{R}^{k \times r}$ is a diagonal matrix. Since $Z$ and $U_{A}$ are orthonormal column matrices, we have $1 \geq \tau_1 \geq \tau_2 \geq \cdots \geq \tau_k > 0$. That is, the Eq.\eqref{QI:SLSMC:Eq:AUUAEE} implies the following relationship
\begin{equation}\label{QI:SLSMC:Eq:muEE}
\begin{aligned}
   \tau^2_k & > k -  \frac{\Delta \|A\|_F^2 \kappa^2}{\|A\|^2} - (\tau^2_1 + \tau^2_2 + \cdots + \tau^2_{k-1}) \\
           & = 1 -  \frac{\Delta \|A\|_F^2 \kappa^2}{\|A\|^2} + (1 - \tau^2_1) + (1 - \tau^2_2) + \cdots + (1 - \tau^2_{k-1}) \\
           & \geq 1 -  \frac{\Delta \|A\|_F^2 \kappa^2}{\|A\|^2}.
\end{aligned}
\end{equation}

Using the results of Eqs.\eqref{QI:SLSMC:Eq:muEE}, it holds that
\begin{equation}\label{QI:SLSMC:Eq:UAUAZZ}
\begin{aligned}
 & ~~~~ \|U_{A}U_{A}^{T} - ZZ^{T}\|^2 = \|(I - ZZ^{T})U_{A}\|^2 \\
 & = 1 - \tau^2_k <  1 - \left(1 -  \frac{\Delta \|A\|_F^2 \kappa^2}{\|A\|^2}\right) = \frac{\Delta \|A\|_F^2 \kappa^2}{\|A\|^2}.
\end{aligned}
\end{equation}

By triangle inequality, using Eq.\eqref{QI:SLSMC:Eq:U1} and \eqref{QI:SLSMC:Eq:UZ}, it yields that
\begin{equation}\label{QI:SLSMC:Eq:UUZZ}
\begin{aligned}
    & ~~~~ \|ZZ^{T} - \hat{U}\hat{U}^{T}\|
    \leq \|ZZ^{T} - Z\hat{U}^{T}\| + \|Z\hat{U}^{T} - \hat{U}\hat{U}^{T}\|\\
    & \leq \|Z\| \|Z^{T} - \hat{U}^{T}\| + \|Z - \hat{U}\| \|\hat{U}^{T}\|\\
    & = \|Z - \hat{U}\|( \|Z\| + \|\hat{U}^{T}\|)\\
    & \leq \|Z - \hat{U}\|_F( \|Z\| + \|\hat{U}^{T}\|) < \beta(1 + \sqrt{1 + \beta}).
\end{aligned}
\end{equation}

Since $\beta = \frac{\omega}{4k+3}$ and $\omega = \frac{\|A\|^2 \epsilon^2}{196(\|A\|_F \kappa + \|A\|)^2} < 1$, we have $\beta < \omega < \sqrt{\omega} <1$. It follows that
\begin{equation}\label{QI:SLSMC:Eq:Delta}
\begin{aligned}
   \Delta
   & = 5\omega + 2 (2+\beta)(2\beta \sqrt{1+\beta} + \beta^2) + (2\beta \sqrt{1+\beta} + \beta^2)^2 \\
   & = 5\omega + (8\beta + 4\beta^2 + 2\beta^3)\sqrt{1+\beta} + (8\beta^2 + 6\beta^3 + \beta^4)\\
   & < 5\omega + 14\beta\sqrt{1+\beta} + 15\beta\\
   & < 5\omega + 28\beta + 15\beta \\
   & < 49 \omega.
\end{aligned}
\end{equation}
By direct computation, applying Eqs.\eqref{QI:SLSMC:Eq:UAUAZZ}, \eqref{QI:SLSMC:Eq:UUZZ} and \eqref{QI:SLSMC:Eq:Delta}, we obtain
\begin{equation}\label{QI:SLSMC:Eq:UUUT0}
\begin{aligned}
   & ~~~~ \| U_{A}U_{A}^{T} - \hat{U}\hat{U}^{T} \|
     = \| U_{A}U_{A}^{T} - ZZ^{T} + ZZ^{T} - \hat{U}\hat{U}^{T} \| \\
   & \leq \| U_{A}U_{A}^{T} - ZZ^{T} \| + \|ZZ^{T} - \hat{U}\hat{U}^{T} \| \\
   & < \sqrt{\frac{\Delta \|A\|_F^2 \kappa^2}{\|A\|^2}} + \beta(1 + \sqrt{1 + \beta})
   < \frac{\|A\|_F \kappa}{\|A\|} \sqrt{\Delta} + \sqrt{\Delta}\\
   & <  \frac{7(\|A\|_F \kappa + \|A\|) }{\|A\|} \sqrt{\frac{\|A\|^2 \epsilon^2}{196(\|A\|_F \kappa + \|A\|)^2}}
   = \frac{1}{2}\epsilon.
\end{aligned}
\end{equation}

\end{proof}

\begin{Theorem}\label{QI:SLSeMC:thm:M2}
Suppose that a matrix $A \in \mathbb{R}^{m \times n}$ satisfies the sample model and data structure, let $\epsilon \in (0,1)$ be an error parameter, $k \in \mathbb{N_{+}}$, $\delta \in (0,1)$ the failure probability, $\kappa$ the condition number, and recall that the definition of the statistical leverage scores $\ell$ from Definition \ref{QI:SLSMC:Def:L1}. Then, for $i \in [m]$, Algorithm \ref{QI:Algorithm:SLSMC:FASLS:a2} can return the approximate statistical leverage scores $\tilde{\ell}_{i}$, s.t. $|\ell_{i} - \tilde{\ell}_{i}| < \epsilon$, with probability $1-\delta$ by using $O\left({\rm poly} \left(k, \kappa, \frac{1}{\epsilon}, \frac{1}{\delta}, \frac{\|A\|_F}{\|A\|}, {\rm log}(mn)\right) \right)$ queries and time.
\end{Theorem}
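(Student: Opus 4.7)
The plan is to split the total error using the triangle inequality, introducing an intermediate quantity $\hat{\ell}_{i} := \|\hat{U}_{i,:}\|^{2}$ computed from the exact $\hat{U}=SV\Sigma^{-1}$ of Algorithm \ref{QI:SLSMC:Subsampling:a1} (i.e.\ before any inner-product approximations). Then
\[
|\ell_{i}-\tilde{\ell}_{i}| \;\leq\; |\ell_{i}-\hat{\ell}_{i}| \;+\; |\hat{\ell}_{i}-\tilde{\ell}_{i}|,
\]
and it suffices to bound each term by $\epsilon/2$. Since $\ell_{i}=e_{i}^{T}U_{A}U_{A}^{T}e_{i}$ and $\hat{\ell}_{i}=e_{i}^{T}\hat{U}\hat{U}^{T}e_{i}$, the first term is immediately controlled by $\|\hat{U}\hat{U}^{T}-U_{A}U_{A}^{T}\|$, so Theorem \ref{QI:SLSMC:Theorem:UI} directly yields $|\ell_{i}-\hat{\ell}_{i}|<\epsilon/2$ with probability $1-\delta$.

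For the second term, I would write $\tilde{U}_{i,:}-\hat{U}_{i,:}=(\mathrm{t}^{T}-S_{i,:}V)\Sigma^{-1}$. Lemma \ref{QI:SLSMC:Lemma:xy} applied $k$ times (each with success $(1-\delta)^{1/k}$, combining to joint success $1-\delta$) gives $|t_{j}-S_{i,:}V_{:,j}|\leq \xi\|S_{i,:}\|\|V_{:,j}\|\leq \xi\|S\|_{F}=\xi\|A\|_{F}$ using $\|V_{:,j}\|=1$ and Eq.\eqref{QI:SLSMC:Eq:SFAF}, hence $\|\mathrm{t}^{T}-S_{i,:}V\|\leq \sqrt{k}\,\xi\|A\|_{F}$. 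Combining with the bound $\|\Sigma^{-1}\|\leq 1/\sigma_{\min}(W)\leq \frac{\kappa}{\|A\|}\sqrt{(4k+3+2\omega)/(4k+3)}$ from Lemma \ref{QI:SLSMC:Lemma:mmw} (and $\omega<1$ to replace $4k+3+2\omega$ by $4k+5$) produces
\[
\|\tilde{U}_{i,:}-\hat{U}_{i,:}\| \;\leq\; \sqrt{k}\,\xi\|A\|_{F}\cdot\frac{\kappa}{\|A\|}\sqrt{\frac{4k+5}{4k+3}}.
\]

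To convert this row-error into a leverage-score error I use the elementary identity $\bigl|\|u\|^{2}-\|\tilde{u}\|^{2}\bigr|\leq \|u-\tilde{u}\|\bigl(2\|u\|+\|u-\tilde{u}\|\bigr)$. Since $\hat{\ell}_{i}$ has already been shown to be within $\epsilon/2$ of $\ell_{i}\leq 1$, one has $\|\hat{U}_{i,:}\|\leq\sqrt{1+\epsilon/2}$, which is $O(1)$. The prescribed value
$\xi=\frac{1}{\sqrt{k}}\bigl(\sqrt{\tfrac{2\epsilon\|A\|^{2}}{\kappa^{2}(4k+5)\|A\|_{F}^{2}}+1}-1\bigr)$
is engineered precisely so that, after substituting the bound on $\|\tilde{U}_{i,:}-\hat{U}_{i,:}\|$ into this quadratic and simplifying (using $(\sqrt{\gamma+1}-1)^{2}=\gamma+2-2\sqrt{\gamma+1}$ with $\gamma=\tfrac{2\epsilon\|A\|^{2}}{\kappa^{2}(4k+5)\|A\|_{F}^{2}}$), both the linear and quadratic contributions in $\|\tilde{U}_{i,:}-\hat{U}_{i,:}\|$ are absorbed into $\epsilon/2$. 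Summing the two halves and union-bounding the failure events of Algorithm \ref{QI:SLSMC:Subsampling:a1} and of the $k$ inner-product estimates gives $|\ell_{i}-\tilde{\ell}_{i}|<\epsilon$ with probability $1-\delta$.

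Finally, for complexity I would cost each stage separately. Algorithm \ref{QI:SLSMC:Subsampling:a1} runs FKV at tolerance $\omega=\Theta\bigl(\epsilon^{2}\|A\|^{2}/(\|A\|_{F}\kappa+\|A\|)^{2}\bigr)$, so by Theorem \ref{QI:SLSMC:Thm:FKV2} it needs $\mathrm{poly}(k,1/\omega,\log(1/\delta))$ queries; the SVD of $W\in\mathbb{R}^{p\times p}$ is polynomial in the same parameters. Step \ref{QI:Algorithm:SLSMC:FASLS:a2:s2} of Algorithm \ref{QI:Algorithm:SLSMC:FASLS:a2} invokes Lemma \ref{QI:SLSMC:Lemma:xy} $k$ times at precision $\xi$ and failure $1-(1-\delta)^{1/k}=\Theta(\delta/k)$, costing $O(\frac{k}{\xi^{2}}\log\frac{k}{\delta})$ samples; since $1/\xi^{2}$ scales polynomially in $k,\kappa,1/\epsilon,\|A\|_{F}/\|A\|$, and each sample/query costs $O(\log(mn))$ by Lemma \ref{QI:SLSMC:lemma:sample:M}, the total bound matches the one claimed. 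The main obstacle is the careful algebra in the last paragraph: verifying that the particular closed-form $\xi$ indeed drives the compounded bound on $|\hat{\ell}_{i}-\tilde{\ell}_{i}|$ below $\epsilon/2$ without any hidden polynomial blow-up in $k$ or $\kappa$.
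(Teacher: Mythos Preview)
Your proposal is correct and follows essentially the same strategy as the paper: split $|\ell_i-\tilde\ell_i|$ via the intermediate $\hat\ell_i=\|\hat U_{i,:}\|^2$, control $|\ell_i-\hat\ell_i|$ by writing it as $|e_i^{T}(U_AU_A^{T}-\hat U\hat U^{T})e_i|$ and invoking Theorem~\ref{QI:SLSMC:Theorem:UI}, and control $|\hat\ell_i-\tilde\ell_i|$ by propagating the $k$ inner-product errors from Lemma~\ref{QI:SLSMC:Lemma:xy} through $\Sigma^{-1}$ with the help of Lemma~\ref{QI:SLSMC:Lemma:mmw}. The complexity bookkeeping also matches.

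The one place you deviate is the bound on $\|\hat U_{i,:}\|$. You bootstrap from the first half, using $\hat\ell_i\le \ell_i+\epsilon/2\le 1+\epsilon/2$ to get $\|\hat U_{i,:}\|=O(1)$. The paper instead uses the cruder direct estimate
\[
\|\hat U_{i,:}\|=\|S_{i,:}V\Sigma^{-1}\|\le \|S_{i,:}\|\,\|\Sigma^{-1}\|<\frac{\|S\|_F}{\sigma_{\min}(W)}=\frac{\|A\|_F}{\sigma_{\min}(W)},
\]
and it is \emph{this} coarser bound for which the prescribed $\xi$ is calibrated: with it one gets
\[
|\hat\ell_i-\tilde\ell_i|<\frac{\|A\|_F^{2}}{\sigma_{\min}^{2}(W)}\bigl[(\sqrt{k}\,\xi+1)^{2}-1\bigr],
\]
and substituting the given $\xi$ collapses the bracket to $\tfrac{2\epsilon\|A\|^{2}}{\kappa^{2}(4k+5)\|A\|_F^{2}}$, yielding exactly $\epsilon/2$. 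Your sharper $\|\hat U_{i,:}\|$ bound of course also gives at most $\epsilon/2$ (since $\|A\|_F/\sigma_{\min}(W)\ge\sqrt{1+\epsilon/2}$ here), but your remark that $\xi$ is ``engineered precisely'' for \emph{your} quadratic is slightly off; it is engineered for the paper's. This is cosmetic, not a genuine gap.
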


\begin{proof}
In step \ref{QI:Algorithm:SLSMC:FASLS:a2:s1} of Algorithm \ref{QI:Algorithm:SLSMC:FASLS:a2}, recalling that $\hat{U} = SV\Sigma^{-1}$, it holds that
\begin{equation}\label{QI:SLSMC:Eq:U2svd}
|\hat{U}_{i,:}\| = \|S_{i,:}V\Sigma^{-1}\| \leq \|S_{i,:}\| \|V\| \|\Sigma^{-1}\| < \frac{\|S\|_F}{\sigma_{\rm min}(W)}.
\end{equation}
In step \ref{QI:Algorithm:SLSMC:FASLS:a2:s2} of Algorithm \ref{QI:Algorithm:SLSMC:FASLS:a2}, recalling that $\mathrm{t}^{T}_{j} \approx S_{i,:}V_{:,j}$, for all $j\in[k]$, it yields that each estimate of inner product has an additive error $\xi\|S_{i,:}\|\|V_{:,j}\|$ by Lemma \ref{QI:SLSMC:Lemma:xy}, each with probability $(1-\delta)^{\frac{1}{k}}$, i.e.,
$$|\mathrm{t}^{T}_{j} - S_{i,:}V_{:,j}| \leq \xi \|S_{i,:}\| \|V_{:,j}\| < \xi \|S\|_F.$$
Moreover, with probability $1- \delta$, we obtain
\begin{equation}\label{QI:SLSMC:Eq:w2sv}
\|\mathrm{t}^{T} - S_{i,:}V\| = \sqrt{\sum_{j=1}^{k}|\mathrm{t}^{T}_{j} - S_{i,:}V_{:,j}|^2} < \sqrt{k} \xi \|S\|_F.
\end{equation}
In step \ref{QI:Algorithm:SLSMC:FASLS:a2:s3} of Algorithm \ref{QI:Algorithm:SLSMC:FASLS:a2}, recalling that $\tilde{U}_{i,:} = \mathrm{t}^{T} \Sigma^{-1}$,
based on Eq.\eqref{QI:SLSMC:Eq:w2sv}, it yields that
\begin{equation}\label{QI:SLSMC:Eq:U2u}
\begin{aligned}
& ~~~~ \|\tilde{U}_{i,:} - \hat{U}_{i,:}\|
= \|\mathrm{t}^{T} \Sigma^{-1} - S_{i,:}V \Sigma^{-1}\| \\
& = \|(\mathrm{t}^{T} - S_{i,:}V)\Sigma^{-1}\|
\leq \|\mathrm{t}^{T} - S_{i,:}V\| \|\Sigma^{-1}\|
< \frac{\sqrt{k} \xi \|S\|_F}{\sigma_{\rm min}(W)}.
\end{aligned}
\end{equation}
Moreover, using the fact that $|\|\tilde{U}_{i,:}\| - \|\hat{U}_{i,:}\|| \leq \|\tilde{U}_{i,:} - \hat{U}_{i,:}\| < \frac{\sqrt{k} \xi \|S\|_F}{\sigma_{\rm min}(W)}$ and Eq.\eqref{QI:SLSMC:Eq:U2svd}, we have
\begin{equation}\label{QI:SLSMC:Eq:TU}
\|\tilde{U}_{i,:}\| < \|\hat{U}_{i,:}\| + \frac{\sqrt{k} \xi \|S\|_F}{\sigma_{\rm min}(W)} < \frac{\|S\|_F}{\sigma_{\rm min}(W)} + \frac{\sqrt{k} \xi \|S\|_F}{\sigma_{\rm min}(W)} = \frac{\|S\|_F}{\sigma_{\rm min}(W)}(1 + \sqrt{k}\xi).
\end{equation}
In step \ref{QI:Algorithm:SLSMC:FASLS:a2:s4} of Algorithm \ref{QI:Algorithm:SLSMC:FASLS:a2}, recalling that $\tilde{\ell}_i = \tilde{U}_{i,:}\tilde{U}^{T}_{i,:}$, defining $\hat{\ell}_{i} = \hat{U}_{i,:}\hat{U}^{T}_{i,:}$, it follows that
\begin{equation}\label{QI:SLSMC:Eq:l22l0}
    \begin{aligned}
   |\tilde{\ell}_{i} - \hat{\ell}_{i}|
    & = |\tilde{U}_{i,:}\tilde{U}^{T}_{i,:} - \hat{U}_{i,:}\hat{U}^{T}_{i,:}| \\
    & = |\tilde{U}_{i,:}\tilde{U}^{T}_{i,:} - \tilde{U}_{i,:}\hat{U}^{T}_{i,:} + \tilde{U}_{i,:}\hat{U}^{T}_{i,:} - \hat{U}_{i,:}\hat{U}^{T}_{i,:}|\\
    & \leq |\tilde{U}_{i,:}\tilde{U}^{T}_{i,:} - \tilde{U}_{i,:}\hat{U}^{T}_{i,:}| + |\tilde{U}_{i,:}\hat{U}^{T}_{i,:} - \hat{U}_{i,:}\hat{U}^{T}_{i,:}| \\
    & = |\tilde{U}_{i,:}(\tilde{U}_{i,:} - \hat{U}_{i,:})^{T}| + |(\tilde{U}_{i,:} - \hat{U}_{i,:})\hat{U}^{T}_{i,:}| \\
    & \leq \|\tilde{U}_{i,:}\| \|(\tilde{U}_{i,:} - \hat{U}_{i,:})^{T}\| + \|\tilde{U}_{i,:} - \hat{U}_{i,:}\| \|\hat{U}^{T}_{i,:}\| \\
    & = (\|\tilde{U}_{i,:} - \hat{U}_{i,:}\|) (\|\tilde{U}_{i,:}\| + \|\hat{U}^{T}_{i,:}\|) \\
    & < \frac{\|A\|_F^2}{\sigma^2_{\rm min}(W)}(k\xi^2 + 2\sqrt{k}\xi) \ \ \ \ ({\rm using}\, {\rm the}\, {\rm results}\, {\rm of}\, {\rm Eqs}.\eqref{QI:SLSMC:Eq:SFAF},\, \eqref{QI:SLSMC:Eq:U2svd},\, \eqref{QI:SLSMC:Eq:U2u}\, {\rm and} \eqref{QI:SLSMC:Eq:TU})  \\
    & < \frac{(4k + 3 + 2\omega)\|A\|_F^2\kappa^2}{(4k + 3)\|A\|^2}\left[(\sqrt{k}\xi + 1)^2 -1\right] \ \ \ \ ({\rm using}\, {\rm Lemma}\, \ref{QI:SLSMC:Lemma:mmw}) \\
    & < \frac{(4k + 5)\|A\|_F^2\kappa^2}{4\|A\|^2}\left[(\sqrt{k}\xi + 1)^2 -1\right] \\
    & = \frac{(4k + 5)\|A\|_F^2\kappa^2}{4\|A\|^2}\frac{2\epsilon\|A\|^2}{\kappa^2(4k + 5)\|A\|_F^2} \\
    & = \frac{1}{2}\epsilon.
    \end{aligned}
\end{equation}

Then, using the results of Eq.\eqref{QI:SLSMC:Eq:UUUT0} and \eqref{QI:SLSMC:Eq:l22l0}, we achieve the total error
\begin{equation}\label{QI:SLSMC:Eq:ll2ll}
    \begin{aligned}
    & ~~~~ |\ell_{i} - \tilde{\ell}_{i} |
    = |\ell_{i} - \hat{\ell}_{i} + \hat{\ell}_{i} - \tilde{\ell}_{i} | \\
    & \leq |\ell_{i} - \hat{\ell}_{i}| + |\hat{\ell}_{i} - \tilde{\ell}_{i}| \\
    & = |(U_{A})_{i,:}[(U_{A})_{i,:}]^{T} - \hat{U}_{i,:}\hat{U}^{T}_{i,:}| + |\hat{\ell}_{i} - \tilde{\ell}_{i}| \\
    & = |e_{i}^T U_{A}U_{A}^{T}e_{i} - e_{i}^T\hat{U}\hat{U}^{T}e_{i}| + |\hat{\ell}_{i} - \tilde{\ell}_{i}| \\
    & = |e_{i}^T(U_{A}U_{A}^{T} - \hat{U}\hat{U}^{T})e_{i}| + |\hat{\ell}_{i} - \tilde{\ell}_{i}| \\
    & \leq \|e_{i}^T\| \|U_{A}U_{A}^{T} - \hat{U}\hat{U}^{T}\| \|e_{i}\| + |\hat{\ell}_{i} - \tilde{\ell}_{i}| \\
    & =  \|U_{A}U_{A}^{T} - \hat{U}\hat{U}^{T}\|  + |\hat{\ell}_{i} - \tilde{\ell}_{i}|
    < \frac{1}{2}\epsilon + \frac{1}{2}\epsilon = \epsilon.
    \end{aligned}
\end{equation}

Let us analyze the time complexity of Algorithm \ref{QI:Algorithm:SLSMC:FASLS:a2}. Due to $p = \left\lceil\frac{1}{\theta^2 \delta}\right\rceil = \left\lceil\frac{(4k + 3 + 2\omega)^2 \kappa^4 \|A\|_F^4}{\omega^2 \|A\|^4 \delta}\right\rceil$ and $\omega = \frac{\|A\|^2 \epsilon^2}{196(\|A\|_F \kappa + \|A\|)^2}$ in Algorithm \ref{QI:SLSMC:Subsampling:a1}, the time complexity of Algorithm \ref{QI:Algorithm:SLSMC:FASLS:a2} is mainly dominated by the SVD decomposition of step \ref{QI:SLSMC:Subsampling:al:s5} of Algorithm \ref{QI:SLSMC:Subsampling:a1}. The mainly time complexity is
$$O(p^3{\rm poly\,log}\,(mn)) = O\left(\frac{k^{6} \kappa^{24} \|A\|_F^{24}}{\epsilon^{12} \delta^3 \|A\|^{24}}{\rm poly\,log}\,(mn)\right).$$

Next, the query complexity can be calculated in the following. \\

In step \ref{QI:Algorithm:SLSMC:FASLS:a2:s2} of Algorithm \ref{QI:Algorithm:SLSMC:FASLS:a2}, defining $\eta = 1-(1-\delta)^{\frac{1}{k}}$ with $\xi = \frac{1}{\sqrt{k}}\left(\sqrt{\frac{2\epsilon\|A\|^2}{\kappa^2(4k+5)\|A\|^2_F}+1}-1\right)$, the vector $\mathrm{t}$ can be estimated by Lemma \ref{QI:SLSMC:Lemma:xy}, i.e., for all $j \in [k]$, $\mathrm{t}_{j} \approx S_{i,:}V_{:,j}$, the mainly query complexity is
\begin{equation}\label{QI:SLSMC:Eq:ll2ll}
    \begin{aligned}
O\left(k\frac{1}{\xi^2}\, {\rm log}\frac{1}{\eta}\right)
& = O\left(k\left(\frac{\sqrt{k}}{\sqrt{\frac{2\epsilon\|A\|^2}{\kappa^2(4k+5)\|A\|^2_F}+1}-1}\right)^2{\rm log}\frac{1}{1-(1-\delta)^{\frac{1}{k}}}\right)\\
& = O\left(k^2 \left(\frac{\sqrt{\frac{2\epsilon\|A\|^2}{\kappa^2(4k+5)\|A\|^2_F}+1}+1}{\frac{2\epsilon\|A\|^2}{\kappa^2(4k+5)\|A\|^2_F}}\right)^2{\rm log}\frac{1}{1-(1-\delta)^{\frac{1}{k}}}\right)\\
& = O\left( \frac{k^2 \kappa^4 (4k+5)^2\|A\|^4_F}{\epsilon^2\|A\|^4} \left(\frac{2\epsilon\|A\|^2}{\kappa^2(2k+5)\|A\|^2_F}+1\right){\rm log}\frac{1}{1-(1-\delta)^{\frac{1}{k}}}\right) \\
& = O\left(\frac{k^4 \kappa^4\|A\|^4_F}{\epsilon^2\|A\|^4} {\rm log}\frac{1}{\delta} \right).
    \end{aligned}
\end{equation}
In step \ref{QI:Algorithm:SLSMC:FASLS:a2:s3} and \ref{QI:Algorithm:SLSMC:FASLS:a2:s4} of Algorithm \ref{QI:Algorithm:SLSMC:FASLS:a2}, the query complexity of calculation are $O(k)$ and $O(k)$, respectively.

Hence, the total queries complexity of algorithm are
$$O\left(\frac{k^6 \kappa^4\|A\|^4_F}{\epsilon^2\|A\|^4} {\rm log}\,\frac{1}{\delta}\,{\rm poly\,log}\,(mn)\right).$$
\hfill
\end{proof}

\section{Numerical experiments}\label{sec:QI:SLSMC:conclusion}
In this section, we illustrate the feasibility of the proposed quantum-inspired statistical leverage scores (QiSLS) algorithm in practice by testing on artificial datasets. The feasibility and efficiency of some other quantum-inspired algorithms has been benchmarked, such as general minimum conical hull problems, support vector machine and canonical correlation analysis in \cite{DBH21,DHLT20,KMKM21}. Here the numerical tests are performed on a laptop with Intel Core i5 by MATLAB R2016(a) with a machine precision of $10^{-16}$. To show the robustness of the sampling in our QiSLS algorithm, we run 50 times repeated trials and report the mean instead of just picking one time.
\begin{Example}\label{QI:SLSMC:Exp1}
In this example, followed the work by in \cite{HIW14}, we generate datasets $A$ and $B$ of size $1000 \times 100$ and $4000 \times 100$, respectively, where $A$ and $B$ are defined by
\begin{equation}\label{QI:SLSMC:Eq:Exp1}
    \begin{aligned}
    A & = {\rm diag}(I_{250}, \, 10^2I_{250}, \, 10^3I_{250}, \, 10^4I_{250})\, {\rm randn}(1000,100), \\
    t & = {\rm randperm}(100,n), \, A(:,t(i)) = {\rm zeros}(1000,1), i\in [n], \\
    B & = {\rm diag}(I_{1000}, \, 10^2I_{1000}, \, 10^3I_{1000}, \, 10^4I_{1000})\, {\rm randn}(4000,100), \\
    r & = {\rm randperm}(100,n), \, B(:,r(i)) = {\rm zeros}(4000,1), i\in [n],
    \end{aligned}
\end{equation}
where $n < 100$ is a given number, the rank of matrix $A$ and $B$ are $100-n$. The previous quantum-inspired algorithms in \cite{DBH21,DHLT20,KMKM21,ADBL20} indicate that quantum-inspired algorithms can perform well in practice under low-rank. We modified the structure of matrix $A$ and $B$ in the second and forth lines of Eq.\eqref{QI:SLSMC:Eq:Exp1}.
\end{Example}

\begin{figure}[H]
\begin{minipage}[t]{0.5\linewidth}
\centering
\includegraphics[width=2.5in]{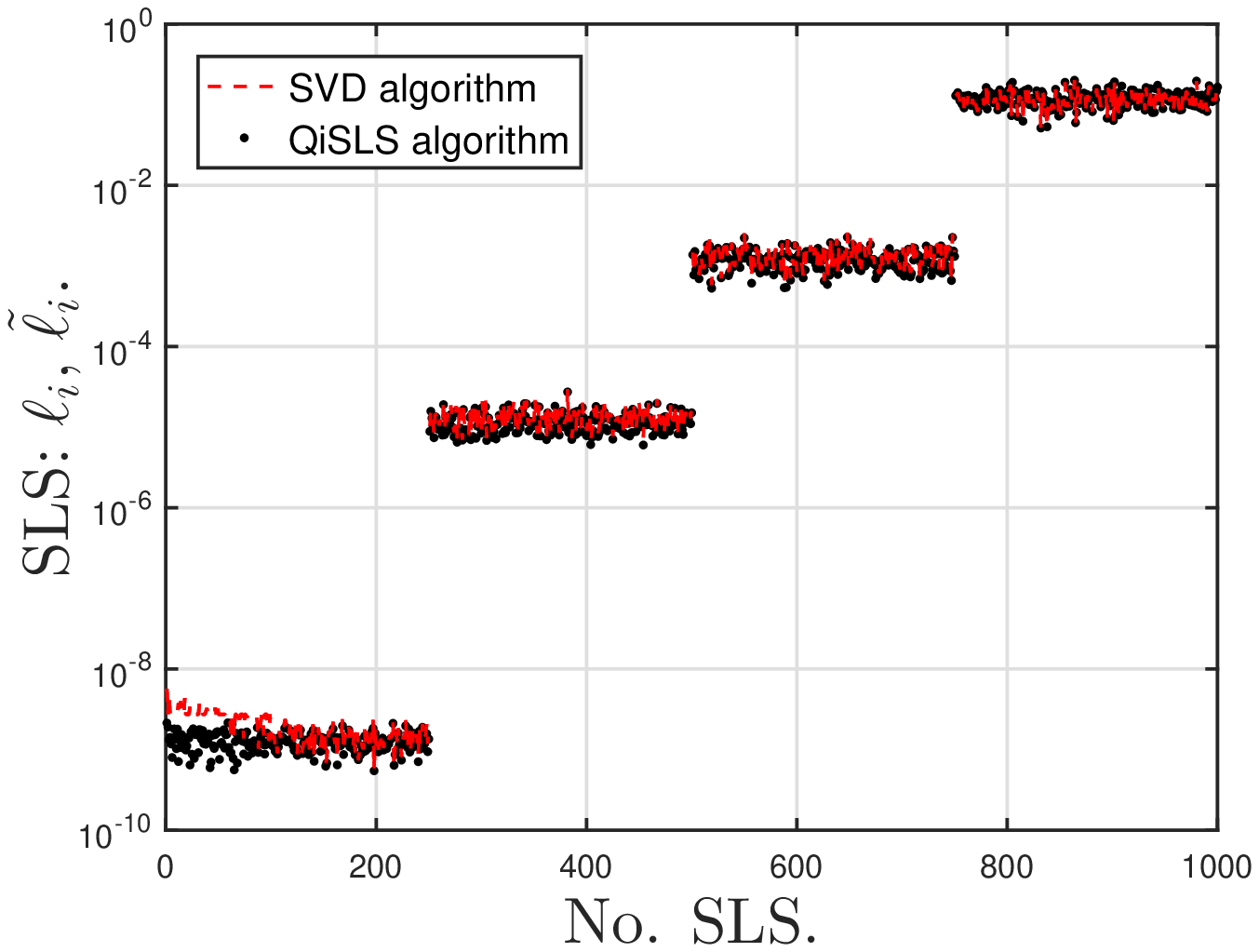}
\caption{SLS of $A$}
\label{QI:SLSMC:fig:sls1:Exp1}
\end{minipage}%
\begin{minipage}[t]{0.5\linewidth}
\centering
\includegraphics[width=2.5in]{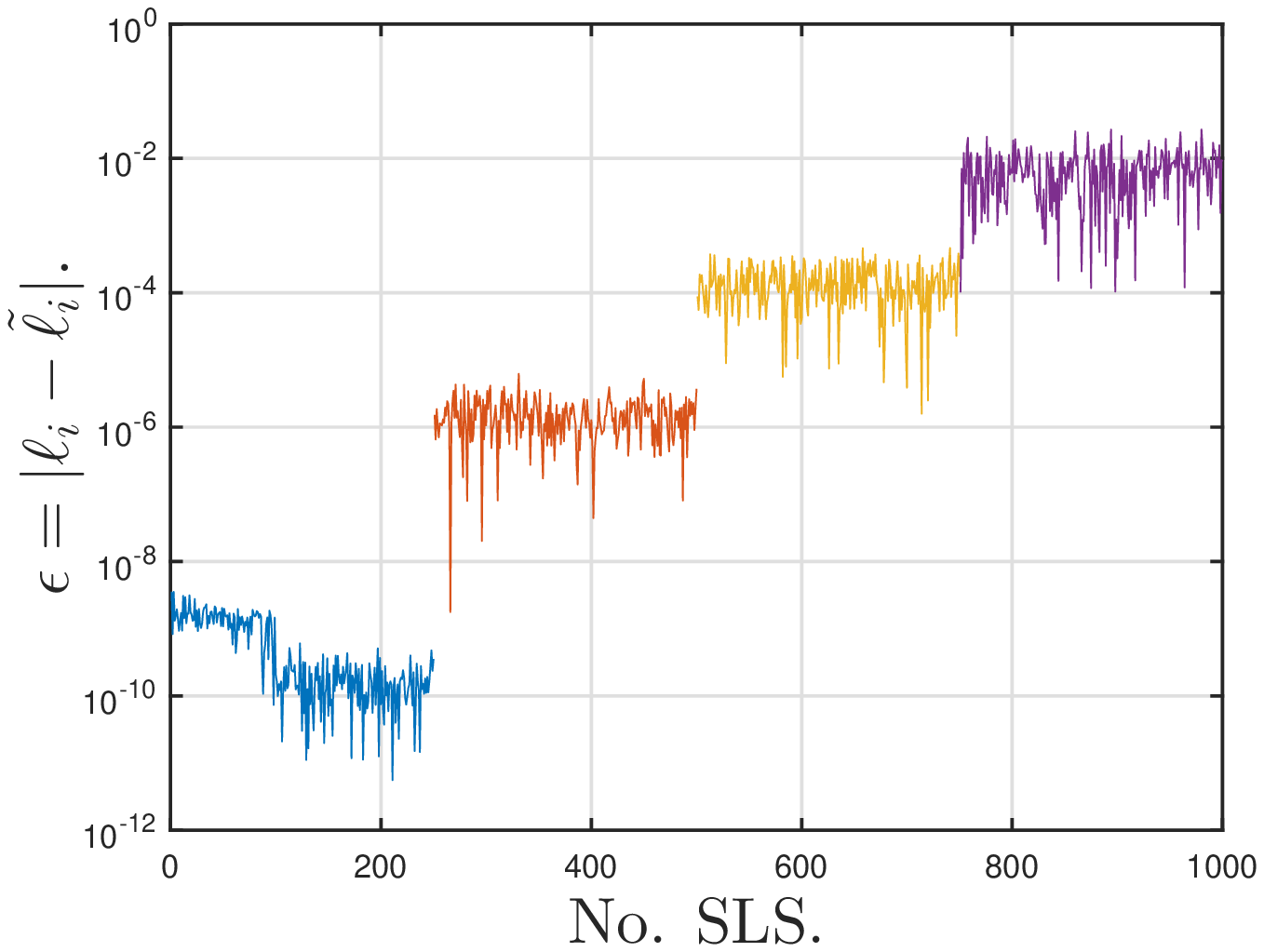}
\caption{Absolute error of SLS $\epsilon$}
\label{QI:SLSMC:fig:err1:Exp1}
\end{minipage}
\end{figure}

\begin{figure}[H]
\begin{minipage}[t]{0.5\linewidth}
\centering
\includegraphics[width=2.5in]{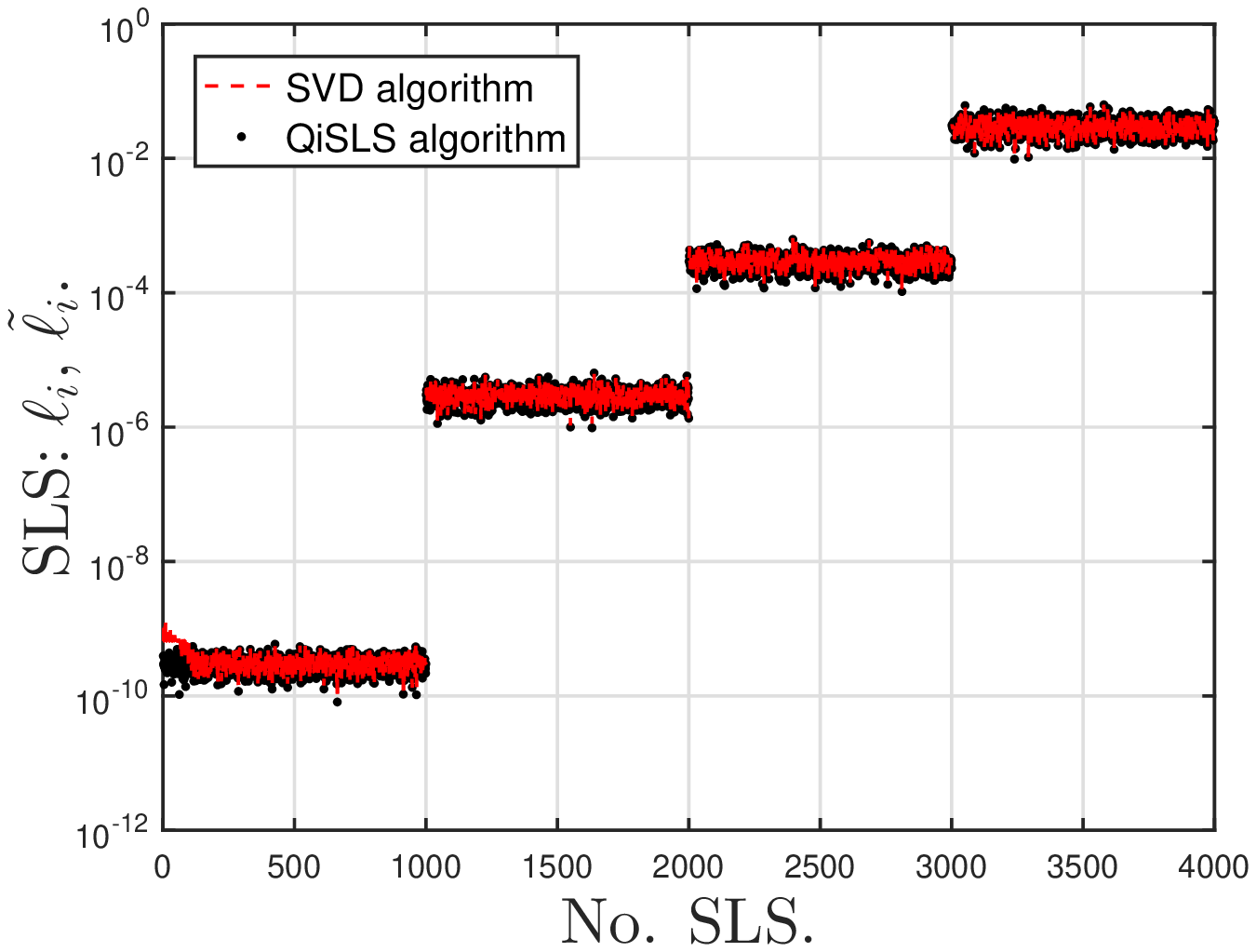}
\caption{SLS of $B$}
\label{QI:SLSMC:fig:sls4:Exp1}
\end{minipage}%
\begin{minipage}[t]{0.5\linewidth}
\centering
\includegraphics[width=2.5in]{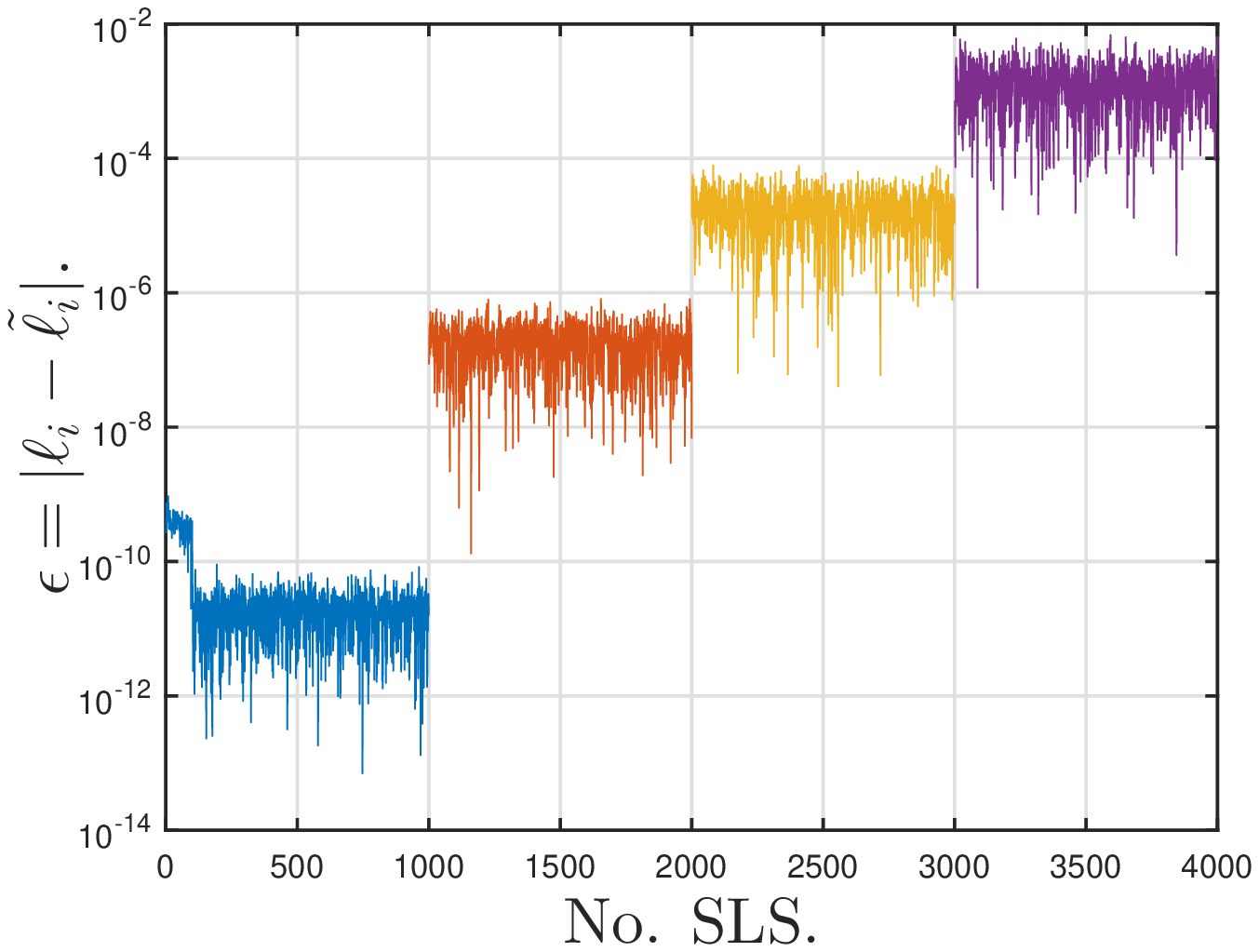}
\caption{Absolute error of SLS $\epsilon$}
\label{QI:SLSMC:fig:err4:Exp1}
\end{minipage}
\end{figure}

For the case Example \ref{QI:SLSMC:Exp1}, the dimensions of $A$ and $B$ are $1000 \times 100$ and $4000 \times 100$, respectively. Setting $n = 70$, we have that the rank of $A$ and $B$ are 30, respectively. Setting the parameters $p = 60$ and $k = 20$, we generate data sets of four different levels of results and make a comparison between our QiSLS algorithm and SVD algorithm. In Figure \ref{QI:SLSMC:fig:sls1:Exp1} and \ref{QI:SLSMC:fig:sls4:Exp1}, the red dashed line denotes the results of statistical leverage scores (SLS) $\ell_{i}$ by SVD algorithm, and the black dot denotes the results of approximate statistical leverage scores $\tilde{\ell}_{i}$ by QiSLS algorithm, respectively. In Figure \ref{QI:SLSMC:fig:err1:Exp1} and \ref{QI:SLSMC:fig:err4:Exp1}, denote the absolute error as $\epsilon = |\ell_{i} - \tilde{\ell}_{i}|$, the four colors $\epsilon$ represent the corresponding data errors. From Figure \ref{QI:SLSMC:fig:sls1:Exp1}-\ref{QI:SLSMC:fig:err4:Exp1}, without loss of generality, for $i \in [m]$, we have that the statistical leverage scores $\tilde{\ell}_{i}$ computed by QiSLS algorithm can make a better approximation on $\ell_{i}$. Similar to previous work  in \cite{DBH21,DHLT20,KMKM21,ADBL20} , we notice that the practical parameter $p$ is much smaller than the theoretical one. Just as \cite{ADBL20}, the performance of quantum inspired algorithm is better than the representations of theoretical complexity bounds.

\begin{Example}\label{QI:SLSMC:Exp2}
In this example, followed the work by in \cite{ADBL20}, we generate a random matrix $A$ of dimension $m \times n$, with rank $r$, and condition number $\kappa$. We sample an $m \times r$ Gaussian random matrix $\Omega$ with entries drawn independently from the standard normal distribution $\mathcal{N}(0,1)$, i.e., $\Omega = randn(m,r)$. The matrix $\Omega$ is generally not orthonormal column, we can make a QR decomposition $\Omega = QR$, where $Q$ is an $m \times r$ orthonormal matrix and $R$ is an $r \times r$ upper triangular matrix. Then, we generate the left singular matrix $U_{A}$ of $A$ by setting $U_{A} = Q$. By a similar method, we can generate the right singular matrix $V_{A}$ of $A$. Finally, for a given condition number $\kappa$, we choose the largest singular value $\sigma_{\rm max}(A)$ uniformly at random in $[a, b]$, and find that the corresponding smallest singular value is $\sigma_{\rm min}(A) = \sigma_{\rm max}(A)/\kappa$. Other singular values are sampled from $(\sigma_{\rm min}(A), \sigma_{\rm max}(A))$. In a word, the matrix $A$ is defined by
\begin{equation}\label{QI:SLSMC:Eq:Exp2}
    \begin{aligned}
    \Omega_1 & = {\rm randn}(m,r), \ [U_{A},R_{u}] = {\rm qr}(\Omega_1), \\
    \Omega_2 & = {\rm randn}(n,r), \ \ [V_{A},R_{v}] = {\rm qr}(\Omega_2), \\
     \sigma_{\rm max}(A)  & = {\rm randperm}(b,a), \ \sigma_{\rm min}(A) = \sigma_{\rm max}(A)/\kappa, \\
    \Sigma_{A} & = {\rm diag[\sigma_{min}(A) + (\sigma_{max}(A) - \sigma_{min}(A))*rand(r,r)]}, \\
    A & = U_{A} \Sigma_{A}V^{T}_{A}.
    \end{aligned}
\end{equation}
\end{Example}

For the case Example \ref{QI:SLSMC:Exp2}, setting $m = 2000$, $n = 500$, $r = 100$, $a = 1$, $b = 1000$ and $\kappa = 1$, then  we generate the low rank matrix $A$ by Eq.\eqref{QI:SLSMC:Eq:Exp2}. For practical applications in not too large datasets, setting $p$ as shown in Algorithm \ref{QI:SLSMC:Subsampling:a1} is rather time consuming. Followed by the work in \cite{DBH21,DHLT20,ADBL20}, setting the parameter $p = 100$, then we make a comparison between SVD algorithm and our QiSLS algorithm with different parameters $k$. The statistical leverage score results of Algorithm \ref{QI:SLSMC:Subsampling:a1} with different parameters $k$ are shown in Figure \ref{QI:SLSMC:fig:sls:Exp2}. In Figure \ref{QI:SLSMC:fig:sls:Exp2}, Similar as the Example \ref{QI:SLSMC:Exp1}, the red dashed line and black dot denote the results of statistical leverage scores $\ell_{i}$ and $\tilde{\ell}_{i}$, respectively. Comparing with the parameters $k = 75$ and $k = 82$, when $k = 88$, except that the two ends (the small and large parts), the remaining parts are well approximated. Without loss of generality, we find that with the increasing properly parameter $k$, the QiSLS algorithm can return a better approximation on SLS $\ell_{i}$.

\begin{figure}[htbp]
\centering
\subfigure{
\begin{minipage}[t]{0.33\linewidth}
\centering
\includegraphics[width=2.2in]{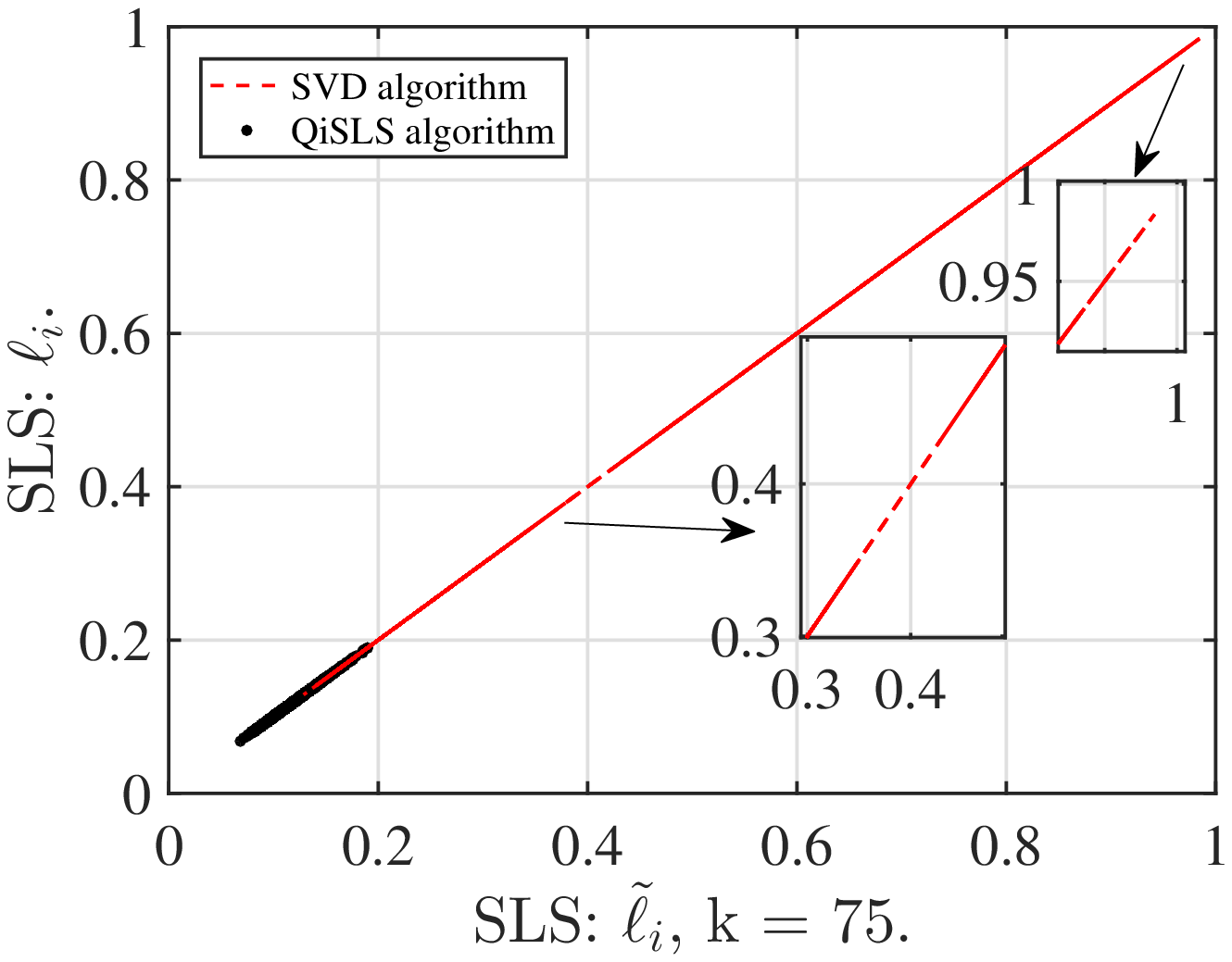}
\end{minipage}%
}%
\subfigure{
\begin{minipage}[t]{0.33\linewidth}
\centering
\includegraphics[width=2.2in]{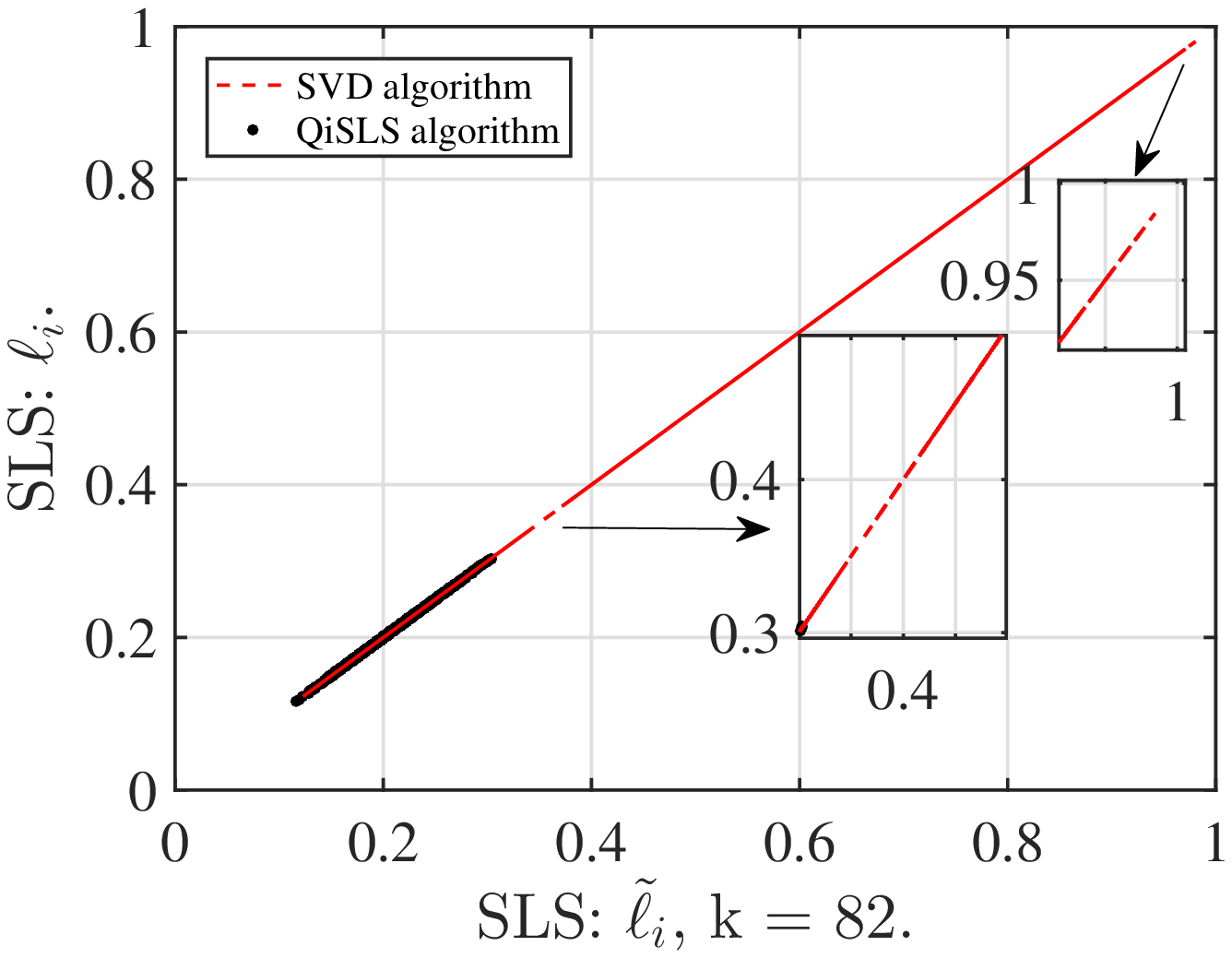}
\end{minipage}%
}%
\subfigure{
\begin{minipage}[t]{0.35\linewidth}
\centering
\includegraphics[width=2.2in]{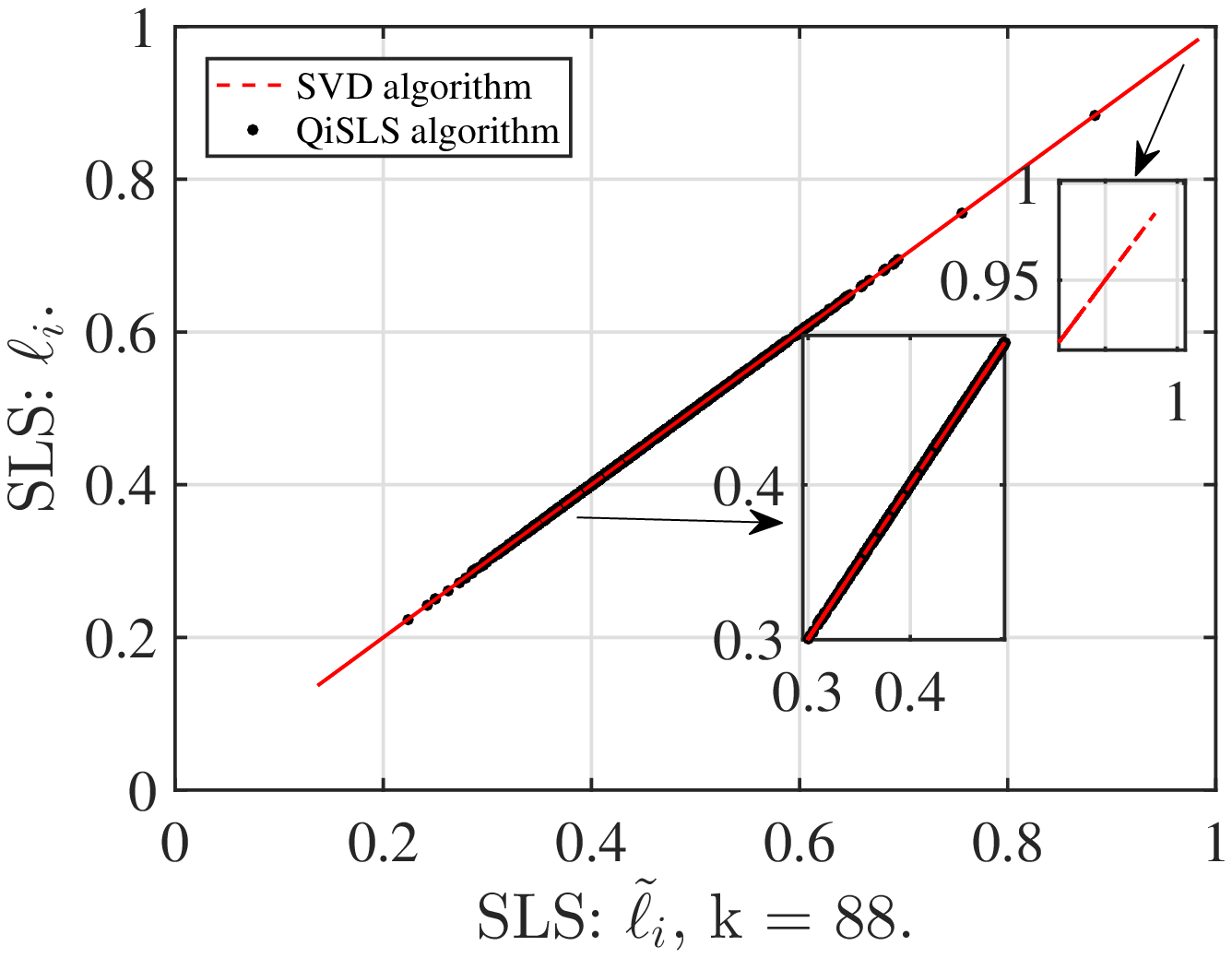}
\end{minipage}
}%
\caption{The statistical leverage scores $\tilde{\ell}_{i}$, $\ell_{i}$ resulted by SVD and QiSLS algorithms with different parameters $k$, respectively.}
\label{QI:SLSMC:fig:sls:Exp2}
\end{figure}

\section{Conclusions}\label{sec:QI:SLSMC:conclusion}
In this paper, based on the sample model and data structure technique, we present a quantum-inspired classical randomized fast approximation algorithm, which dramatically reduces the running time to compute statistical leverage scores of low-rank matrix. It is a generalization of method in \cite{ET18,FKV04}, the core ideas of our implementations of algorithm \ref{QI:SLSMC:Subsampling:a1} is FKV algorithm \cite{FKV04}, but with different parameter setting for choosing the sampling matrix size. It can improve the error of low rank matrix approximation, then we use algorithm \ref{QI:Algorithm:SLSMC:FASLS:a2} to approximate the statistic leverage scores, which combines with inner product estimation method in \cite{ET18}. Next, we theoretically analyze the approximation accuracy and the time complexity of our algorithm. When the matrix size is large, the time complexity of previous work on computing the statistical leverages scores is at least linear in matrix size, our algorithm takes time polynomial in integer $k$, condition number $\kappa$ and logarithm of the matrix size, and we achieve an exponential speedup when the rank of the input matrix is low-rank. Our numerical experiments show that the QiSLS algorithm performs well in practice on large datasets.

Our algorithm still has a pessimistic dependence on condition number $\kappa$ and error parameter $\epsilon$, we will explore other advanced techniques to further improve its dependence on condition number and tighten the error bounds to reduce the computation complexity. Just as the previous quantum-inspired algorithm \cite{ET18,GST18,CLW18,CGLLTW20,JGS19,CLS19,DBH21,DHLT20,ET182}, we do not intend to illustrate the supremacy of quantum computing, we are willing to understand the boundaries between the classical and quantum methods.

\section*{Acknowledgment}
The authors are very thankful to A. Sobczyk for pointing out 4 references.

\end{document}